\declaretheorem[style=remark]{remark}
\long\def\addtocontents#1#2{%
  \protected@write\@auxout
  {\let\label\@gobble \let\index\@gobble \let\glossary\@gobble}%
  {\string\@writefile{#1}{#2}}}
\newcommand\tableofcontents{%
  \section*{%\contentsname\\[1ex]\noindent
    \@mkboth{\noindent%
       \MakeUppercase\contentsname}{\MakeUppercase\contentsname}}%
  \@starttoc{toc}%
  }
\newcommand*\l@section[2]{%
  \ifnum \c@tocdepth >\z@
  \addpenalty\@secpenalty
  \addvspace{1.0em \@plus\p@}%
  \setlength\@tempdima{1.5em}%
  \begingroup
    \parindent \z@ \rightskip \@pnumwidth
    \parfillskip -\@pnumwidth
    \leavevmode \bfseries
    \advance\leftskip\@tempdima
    \hskip -\leftskip
    #1\nobreak\hfil \nobreak\hb@xt@\@pnumwidth{\hss #2}\par
  \endgroup
  \fi}
\newcommand*\l@subsection{\@dottedtocline{2}{1.5em}{2.3em}}
\newcommand*\l@subsubsection{\@dottedtocline{3}{3.8em}{3.2em}}
\newcommand*\l@paragraph{\@dottedtocline{4}{7.0em}{4.1em}}
\newcommand*\l@subparagraph{\@dottedtocline{5}{10em}{5em}}
\newcommand{\email}[1]{\protect\href{mailto:#1}{#1}}
\def\addressices{Oden Institute for Computational Engineering \& Sciences,
  The University of Texas at Austin, Austin, TX, USA}
\def\addressomar{Oden Institute for Computational Engineering \& Sciences,
  Department of Mechanical Engineering, and Department of Geological
  Sciences, The University of Texas at Austin, Austin, TX, USA}
\begin{document}

\author{ Thomas O'Leary-Roseberry\footnotemark[1]  \and Nick Alger\footnotemark[2] \and Omar Ghattas\footnotemark[3]}
\renewcommand{\thefootnote}{\fnsymbol{footnote}}
\footnotetext[1]{\addressices. Email: \email{tom@ices.utexas.edu}.}
\footnotetext[2]{\addressices. Email: \email{nalger@ices.utexas.edu}.}
\footnotetext[3]{\addressomar. Email: \email{omar@ices.utexas.edu}.}
\renewcommand{\thefootnote}{\arabic{footnote}}

\title{Inexact Newton Methods for\\
 Stochastic Nonconvex Optimization with\\
Applications to Neural Network Training}

% \thanks{Started on September 4, 2018 - Today is \today. }

\maketitle

\begin{abstract}

We study stochastic inexact Newton methods and consider their
application in nonconvex settings. Building on the work of
[R. Bollapragada, R. H. Byrd, and J. Nocedal, IMA Journal of Numerical
  Analysis, 39 (2018), pp. 545--578] we derive bounds for convergence rates in
expected value for stochastic low rank Newton methods, and
stochastic inexact Newton Krylov methods. These bounds
quantify the errors incurred in subsampling the Hessian and gradient,
as well as in approximating the Newton linear solve, and in choosing
regularization and step length parameters. We deploy these methods in
training convolutional autoencoders for the MNIST and CIFAR10
data sets. Numerical results demonstrate that, relative to first order
methods, these stochastic inexact Newton methods often converge
faster, are more cost-effective, and generalize better.

\end{abstract}

\begin{keywords}
Machine learning, stochastic optimization, nonconvex optimization, Newton methods, Krylov methods, low rank methods, inexact Newton methods, deep learning, neural networks.
\end{keywords}

\begin{AMS}
49M15,  % Newton-type methods
49M37,  % methods of math prog
65C60,  % Computational problems in statistics
65K10, % opt and variational techniques
90C26  % Nonconvex programming
\end{AMS}

\pagestyle{myheadings}
\thispagestyle{plain}

\section{Introduction}

We consider the stochastic optimization problem
\begin{equation} \label{expected_risk}
  \min\limits_{w \in \mathbb{R}^d} F(w) = \int \ell(w; x,y)\,d\nu(x,y), 
\end{equation}
where $\ell$ is a smooth (loss) function, $w \in\mathbb{R}^d$ is the vector of optimization variables, the \textit{data pairs} $(x,y)$ is distributed with joint probability distribution $\nu(x,y)$, and $F:\mathbb{R}^d \rightarrow \mathbb{R}$ is referred to as the \textit{expected risk}. This problem arises in machine learning, where the goal is to reconstruct a mapping $x \mapsto y$ with a deep neural network or other model parametrized by $w$. See for example \cite{GoodfellowBengioCourville2016}. In practice, complete information about $\nu$ is not available. Rather, one has access to samples $x_i,y_i \sim \nu$, which leads to the Monte Carlo approximation of \eqref{expected_risk} as
\begin{equation} \label{empirical_risk_minimization}
  \min\limits_{w \in \mathbb{R}^d} F_{X}(w) = \frac{1}{N_X}\sum\limits_{i =1}^{N_X}F_i(w), 
\end{equation}
where $F_i(w) =\ell(w;x_i,y_i) $, and $X = \{(x_i,y_i)|x,y \sim \nu\}_{i=1}^{N_X}$ . The function $F_{X}:\mathbb{R}^d \rightarrow \mathbb{R}$ is referred to as the \textit{empirical risk}. Because of sampling error, a solution to \eqref{empirical_risk_minimization} may be a poor approximation of a solution to \eqref{expected_risk}. Iterative methods for solving \eqref{empirical_risk_minimization} are therefore judged not only by how efficently they solve \eqref{empirical_risk_minimization}, but also how well the approximate solutions they find generalize to unseen data. Optimization problem \eqref{empirical_risk_minimization} is typically solved via an iteration of the form
\begin{equation} \label{gradient_iteration}
  w_{k+1} = w_k + \alpha_k p_k,
\end{equation}
where $p_k$ is typically a gradient based search direction and $\alpha_k$ is the step length (or learning rate as it is known in machine learning). If $p_k = - \nabla F_X(w_k)$, then the iteration is gradient descent. If $p_k = - \nabla^2 F_X(w)^{-1} \nabla F_X(w)$, then the iteration is Newton's method. For many machine learning representations, the cost of evaluating $F_X$ is $O(dN_X)$. Computing the gradient using adjoint methods requires $O(dN_X)$ work and $O(d)$ storage, while explicitly forming the Hessian matrix requires $O(d^2N_X)$ work and $O(d^2)$ storage.

Several features make optimization problem \eqref{empirical_risk_minimization} difficult to solve:

\begin{enumerate}
  \item Large parameter dimension, $d$ 
  \item Large data dimension, $N_X = |X|$
  \item Nonconvexity of $F_X$
  \item Ill conditioning of $F_X$
  
\end{enumerate}

Features 1 and 2 make gradient based optimization methods expensive. To ease the computational burden, and motivated by the redundancy in the data for large $N_X$, at a given iteration $k$, one typically subsamples data $X_k$ from $X$, then substitutes $X_k$ for $X$ in \eqref{empirical_risk_minimization}. Such methods are known as stochastic gradient or stochastic Newton methods. Because of feature 3, finding global minimizers is computationally intractible (NP-hard), and instead one has to settle for local minima \cite{Bertsekas1997,MurtyKabadi1987}. In deep learning, it is conjectured that local minima are almost as good as global minima \cite{ChoromanskaHenaffMathieu2015}. However, finding local minima is still difficult because the energy landscape is riddled with saddle points \cite{DauphinPescanuGulcehre2014}. Saddle points typically correspond to suboptimal solutions in nonconvex optimization problems such as matrix factorization and phase retrieval \cite{JainJinKakadeEtAl2015,SunQuWright2016}. 

How best to deal with saddle points is an open area of research. Some
work has been done to classify when nonconvex problems are tractable
\cite{SunQuWright2015a}. In numerical optimization, modified Newton
methods that enforce positive definiteness of the Hessian (for example by maintaining positive eigenvalues in a spectral decomposition) are employed
to ensure descent for nonconvex problems
\cite{GillMurrayWright1981,NocedalWright2006}. In machine learning,
the use of the spectral decomposition of the Hessian with absolute
values of eigenvalues was introduced by Dauphin et al.\ under the name
of saddle free Newton (SFN) \cite{DauphinPescanuGulcehre2014}. This
method detects indefiniteness and facilitates quicker escape from
saddle points by following negative curvature directions. Paternain et al. prove that a variant of the SFN algorithm converges to local minima with probability $1-p$ in $O(\log(1/p)) + O(\log(1/\epsilon))$ iterations \cite{PaternainMokhtariRibeiro2019}.  Reddi et
al.\ argue for using first order methods while the gradient is large
and switching to second order methods when near stationary points
\cite{ReddiZahirSraEtAl2017}. Jin et al.\ argue for adding noise
uniformly sampled from a ball with radius large enough to dominate
saddle regions, where optimizers get stuck \cite{JinChiGeEtAl2017}.

In addition to non-convexity, feature 4 makes the optimization problem hard to solve. Gradient based methods converge very slowly for ill-conditioned
problems
\cite{Alger2019,LiChenCarlsonetal2016,Luenberger1984,SaarinenBramleyCybenko1993}.  In
contrast, under mild assumptions the convergence of Newton's method is
independent of the conditioning of the problem
\cite{NocedalWright2006}. Conventional implementations of Newton's
method are computationally impractical since explicitly forming and
factorizing a Hessian requires $O(d^2N_X + d^3)$ operations. To
address the prohibitive nature of explicit Hessian-forming Newton
methods for solving \eqref{expected_risk}, Hessian-free methods that approximate the Newton solve without explicitly forming the Hessian have been used in machine learning. Exploiting the fact that Hessian-vector products can be computed matrix-free via adjoint methods with the same complexity as gradient evaluations, i.e. $O(dN_X)$, these methods can reduce the complexity required for the Hessian approximation to $O(kdN_X)$, where $k \ll d$ is the number of matrix vector products used in the approximation. Martens et al.\ argue for the use of the Gauss-Newton Hessian as well as Kronecker factorizations of approximate curvature in approximating the Newton solve \cite{Martens2010,MartensGrosse2015,MartensSutskever2012}. Roosta et al.\ explore stochastic inexact Newton methods, and derive probabilistic bounds for spectral convergence of the subsampled Hessian to the true Hessian \cite{RoostaMahoney2016a,RoostaMahoney2016b,XuRoostaMahoney2017b,XuRoostaMahoney2017a}. In addition, they experimentally demonstrate that while Gauss Newton methods may help with the conditioning of the optimization problem, they are prone to getting stuck at saddle points. Bollapraga et al.\ analyze inexact Newton conjugate gradient (CG) methods in the semi-stochastic setting (where the Hessian is subsampled, but not the gradient) \cite{BollapragadaByrdNocedal2018}. In these
methods, CG iterations for solving the Newton system are terminated
early, thereby reducing the complexity to $O(kdN_X)$ operations, where
$k$ is the number of CG iterations, which depends on the clustering of
the eigenvalues of the Hessian.

Here we extend the analysis of
\cite{BollapragadaByrdNocedal2018} to the cases of  nonconvex
problems arising in the training of neural networks,  subsampling of
gradients (in addition to Hessians), and to other Krylov
solvers such as MINRES and GMRES. These methods can be adapted to the nonconvex case by terminating early when negative curvature directions are detected.
Moreover, as another form of inexactness,  we propose
stochastic low rank Newton algorithms that use a randomized
eigensolver to truncate the spectrum of the Hessian, retaining
eigenvalues that are largest in magnitude. Taking their absolute
values then  defines the low rank saddle free Newton algorithm. The complexity
is $O(rdN_x + r^2d)$, where $r$ is the effective rank of the
Hessian.
For the inexact Newton Krylov methods as well as the low rank
Newton, we derive convergence rates that quantify the effects on
convergence of subsampling the gradient and Hessian, approximating the
Newton linear solve, and hyperparameters such as regularization parameter and step
length $\alpha_k$.

We present numerical experiments on training convolutional
autoencoders for the MNIST and CIFAR10 data sets comparing
stochastic low rank saddle free Newton, stochastic inexact Newton CG, stochastic inexact Newton GMRES, and stochastic inexact Newton MINRES. We find that the eigenvalues
of the Hessian cluster and decay rapidly,
% (due to over-parameterization),
as has been observed by others
\cite{AlainRouxManzagol2019,GhorbanKrishnanXiaoi2019,SagunBottouLeCun2016},
suggesting that $k \ll d$ and $r \ll d$.
Numerical results demonstrate that, relative to first order
methods, these stochastic inexact Newton methods often converge
faster, are more cost-effective, and generalize better. To accompany this paper, we present \texttt{hessianlearn}, a Python library for second order stochastic optimization methods in TensorFlow, which can be found at  \url{https://github.com/tomoleary/hessianlearn}.

\section{Background}

Notation: For matrices $A$ and $B$m $A \succeq B$ means that $A - B$ is semi positive definite. We work in finite dimensional Hilbert spaces with inner product $ x^T y$, and corresponding norm $\| \cdot\| = \|\cdot \|_2$, or the Euclidean $\ell^2$ distance on vectors in $\mathbb{R}^d$. By $\mathbb{E}$ we mean the expectation taken against the measure $\nu$. When we say $\mathbb{E}_k$ we mean the conditional expectation at an iteration $k$ taken over all possible sample batches $X_k$. In the Euclidean space $\mathbb{R}^d$ we denote by $B_r(w)$ the ball of radius $r$ centered at $w$.

In solving \eqref{empirical_risk_minimization} one seeks to find a candidate solution $w^*$ that satisfies optimality conditions.
\begin{definition}[Stationary points]\label{first_order_optimality}
A point $w^*$ is a first order stationary point if $\|\nabla F(w^*)\| = 0$. A point $w^*$ is an $\epsilon$-first order stationary point if $\|\nabla F(w^*)\| < \epsilon$. A point $w^*$ is a second order stationary point if
\begin{equation}
  \|\nabla F(w^*)\| = 0 \quad \text{and} \quad 0 \preceq \nabla^2 F(w^*). 
\end{equation}
A point $w^*$ is an $(\epsilon_g,\epsilon_H)$-second order stationary point if
\begin{equation}\label{eps_second_order_stationary}
  \|\nabla F(w^*)\| \leq \epsilon_g \quad \text{and} \quad -\epsilon_H I \preceq \nabla^2 F(w^*), 
\end{equation}
for some $\epsilon_g,\epsilon_H >0$. A point $w^*$ is a stochastic $(\epsilon_g,\epsilon_H)$-second order stationary point if
\begin{equation}
  \mathbb{E}[\|\nabla F(w^*)\|] \leq \epsilon_g \quad \text{and} \quad -\epsilon_H I \preceq \mathbb{E}[\nabla^2F(w^*)].
\end{equation}
\end{definition}
% Gradient based optimization algorithms typically terminate at $\epsilon$-first order stationary points, which under suitable conditions are guaranteed to be in a neighborhood of a first order stationary points.

We wish to solve the empirical risk minimization \eqref{empirical_risk_minimization} via the gradient based iteration \eqref{gradient_iteration}, approximately solving the Newton system
\begin{equation} \label{generic_newton_solve}
  \nabla^2 F_{S_k}(w_k)p_k = -\nabla F_{X_k}(w_k).
\end{equation}
The subsampled gradient is calculated over data $X_k$, while for computational economy the subsampled Hessian is calculated over data $S_k \subset X_k$, where $N_{S_k} \ll N_{X_k}$. Due to ill conditioning and nonconvexity we consider the Tikhonov regularized empirical risk minimization problem
 \begin{equation}\label{tikhnov_regularized_exp_risk_minimization}
   \min_w \overline{F}_X =   \frac{1}{N_{X}}\sum\limits_{i =1}^{N_{X}}F_i(w),  +\frac{\gamma}{2}\|w\|^2,
 \end{equation}
 for some $\gamma >0$ \cite{Tikhonov1963}. Much of the following can be extended to other regularizations such as $\ell^1$ or cubic regularization, however this is out of the scope of this work.

We now state some assumptions that will be used later in the paper. Assumptions \ref{assumption_a1}-\ref{assumption_a4} are adapted from \cite{BollapragadaByrdNocedal2018}.
\begin{enumerate}[label=A\arabic*]

  \item \label{assumption_a1} (Dominant positive eigenvalues) The function $F$ is twice continuously differentiable and any subsampled Hessian is spectrally bounded from above with constant $L$. That is, for any integer ${N_{S}}$ and set $S$ with $|S| = {N_{S}}$, there exists a positive constant $L_{N_{S}} < L$ such that 
  \begin{equation}
    \nabla^2 F_S (w) \preceq L_{N_{S}} I.
  \end{equation}
Moreover the first $r$ eigenvalues of $\nabla^2 F_S (w)$ evaluated along a path of iterates starting at $w_0$ are positive.

  \item \label{assumption_a2} (Bounded variance of sample gradients) There exists a constant $v$ such that
  \begin{equation}
    tr(\text{Cov}(\nabla F_i(w))) \leq v^2 \quad \forall w \in \mathbb{R}^d
  \end{equation}

  \item \label{assumption_a3} (Lipschitz continuity of Hessian) The Hessian of the objective function $F$ is Lipschitz continuous, i.e., there exists a constant $M>0$ such that 

  \begin{equation}
    \|\nabla^2F(w) - \nabla^2 F(z) \| \leq M\|w - z \|^2 \quad \forall w,z \in \mathbb{R}^d
  \end{equation}

  \item \label{assumption_a4} (Bounded variance of Hessian components) There exists $\sigma$ such that, for all component Hessians, we have
  \begin{equation}\label{bounded_var_hess_comp}
    \|\mathbb{E}[(\nabla^2 F_i(w) - \nabla^2F(w))^2]\| \leq \sigma^2, \quad \forall w \in \mathbb{R}^d
  \end{equation}

  \item \label{assumption_a5} ($\epsilon_g$-first order stationary point). For a given candidate stationary point $w^*$ and gradient batch size $N_{X_k}$, there exists $\epsilon_g>0$ such that
  \begin{equation}
    \mathbb{E}_k[\|\nabla F_{X_k} (w^*)\|] \leq \epsilon_g
  \end{equation}

\end{enumerate}

\section{Inexact Newton Methods} \label{inexact_newton_section}
 A subsampled inexact Newton method as described in \cite{DemboEisenstatSteihaug1982} is a method for which the Newton system \eqref{generic_newton_solve} is solved inexactly, and the linear solve is terminated when the following condition is satisfied:
\begin{equation} \label{inexact_forcing}
    \|\nabla^2 \overline{F}_{S_k}p_k + \nabla \overline{F}_{X_k}\| \leq \eta_k \|\nabla \overline{F}_{X_k}\|.
\end{equation}
When the gradient is large, the tolerance for inexactness is high. The tolerance tightens as one nears the solution. This avoids unnecessary work in the linear solves far from the solution, but still retains superlinear or quadratic convergence near the solution. Optimal choices of $\eta_k$ are discussed in the papers of Eisenstat and Walker \cite{DemboEisenstatSteihaug1982,EisenstatWalker1996}. We establish the following local convergence rate for a stochastic inexact Newton method for the choice of $\eta_k \leq \|\nabla \overline{F}_{X_k}\|$.

\begin{restatable}[Local convergence for stochastic inexact Newton methods with gradient norm forcing]{theorem}{inexacteisen}
\label{inexact_eisenprop}
Let $w^*$ be a stationary point and suppose that assumptions \ref{assumption_a1}-\ref{assumption_a4} hold, let 
\begin{equation}
  \mu = \min \bigg\{\|\nabla^2F(w^*) + \gamma I\|^{-1}, \|[\nabla^2 F(w^*) + \gamma I]^{-1}\| \bigg\},
\end{equation}
and assume that
\begin{enumerate}[label=(\alph{enumi})]
  \item $w_k \in B_{\delta}(w^*)$ with $\delta < \frac{2\mu}{L_{N_{S_k}} }$,
  \item $-\epsilon_H I \preceq \nabla^2 F_{S_k}$ for all $S_k$ and for all $w \in B_\delta(w^*)$,
  \item The Tikhonov regularization parameter is chosen such that $\gamma > \epsilon_H$,
  \item $\|\nabla^2\overline{F}_{S_k}(w_k)p_k - \nabla \overline{F}_{X_k}(w_k)\| \leq \eta_k\|\nabla \overline{F}_{X_k}(w_k)\|$ with $\eta_k \leq \|\nabla \overline{F}_{X_k}(w_k)\|$.
\end{enumerate}
Then for the iterate $w_{k+1} = w_k + \alpha_k p_k$, we have the following bound:
\begin{equation}\label{inexact_eisenprop_bound}
    \mathbb{E}_k[\|w_{k+1} - w^*\|] \leq c_0 +c_1 \|w_k - w^*\| + c_2 \|w_k - w^*\|^2,
\end{equation}
where
\begin{subequations}
\begin{align}
   c_0 &= \frac{1}{\gamma - \epsilon_H} \bigg[\frac{\alpha_k v}{\sqrt{N_{X_k}}}\bigg(1 + \frac{v}{\sqrt{N_{X_k}}}\bigg)\bigg]\\
   c_1 &= \frac{1}{\gamma - \epsilon_H}\bigg(L_{N_{S_k}} |1 - \alpha_k| + \frac{\sigma}{\sqrt{N_{S_k}}} + \frac{2\alpha_k v\mu}{\sqrt{N_{X_k}}}\bigg)\\
   c_2 &= \frac{1}{\gamma - \epsilon_H}\bigg(\frac{M}{2} + \alpha_k\mu^2 \bigg).
\end{align}
\end{subequations}

% \end{proposition}
\end{restatable}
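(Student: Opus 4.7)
The plan is to decompose $w_{k+1} - w^*$ in the manner standard for inexact Newton analyses, tracking the additional error sources arising from Hessian subsampling, gradient subsampling, the step length $\alpha_k \neq 1$, and the inexact linear solve. Write $H_k = \nabla^2 \overline{F}_{S_k}(w_k)$, $g_k = \nabla \overline{F}_{X_k}(w_k)$, and let $H = \nabla^2 \overline{F}(w_k)$, $g_* = \nabla \overline{F}(w_k)$ denote the population counterparts. Condition (d) gives a residual $r_k$ with $H_k p_k = -g_k + r_k$ and $\|r_k\| \leq \eta_k\|g_k\| \leq \|g_k\|^2$. Substituting $p_k = -H_k^{-1} g_k + H_k^{-1} r_k$ into $w_{k+1} - w^* = (w_k - w^*) + \alpha_k p_k$ and factoring out $H_k^{-1}$ yields
\[ w_{k+1} - w^* = H_k^{-1}\bigl[H_k(w_k - w^*) - \alpha_k g_k + \alpha_k r_k\bigr]. \]

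Next I would add and subtract $g_*$ and $H(w_k - w^*)$ inside the bracket to split the main piece as
\[ H_k(w_k-w^*) - \alpha_k g_k = (H_k - H)(w_k-w^*) + [H(w_k-w^*) - g_*] + (1-\alpha_k)\,g_* + \alpha_k(g_* - g_k), \]
isolating a Hessian subsampling error, a Taylor remainder, a step-length mismatch acting on $g_*$, and a gradient subsampling error. Taking norms, conditions (b) and (c) give $\|H_k^{-1}\| \leq 1/(\gamma-\epsilon_H)$, the common prefactor of $c_0,c_1,c_2$. The Taylor remainder is bounded by $\tfrac{M}{2}\|w_k-w^*\|^2$ by A3 using $g_*(w^*) = 0$, yielding the $M/2$ contribution to $c_2$. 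A matrix-concentration (or direct second-moment) argument applied to the i.i.d.\ sample mean, together with A4 and Jensen, gives $\mathbb{E}_k\|H_k - H\| \leq \sigma/\sqrt{N_{S_k}}$, yielding the $\sigma/\sqrt{N_{S_k}}$ piece of $c_1$. Since $g_*(w^*) = 0$, a mean-value bound together with A1 (inherited by the population Hessian) gives $\|g_*\| \leq L_{N_{S_k}}\|w_k - w^*\|$, so the step-length term contributes $L_{N_{S_k}}|1-\alpha_k|$ to $c_1$. Finally, A2 with Jensen gives $\mathbb{E}_k\|g_k - g_*\| \leq v/\sqrt{N_{X_k}}$, producing the $\alpha_k v/\sqrt{N_{X_k}}$ part of $c_0$.

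For the inexact residual $\alpha_k r_k$, I would apply $\|r_k\| \leq \|g_k\|^2 \leq (\|g_k - g_*\| + \|g_*\|)^2$ pointwise and expand. Taking conditional expectation with $\mathbb{E}_k\|g_k - g_*\|^2 \leq v^2/N_{X_k}$ from A2, and bounding $\|g_*\|$ in terms of $\|w_k - w^*\|$ locally via the Taylor expansion at $w^*$ together with the definition of $\mu$ (which controls how $\|g_*\|$ scales with $\|w_k - w^*\|$ under the regularized Hessian), produces three pieces: a pure sampling piece $\sim v^2/N_{X_k}$ that augments $c_0$ (giving the factor $1 + v/\sqrt{N_{X_k}}$), a cross piece $\sim \mu v\|w_k - w^*\|/\sqrt{N_{X_k}}$ joining $c_1$, and a deterministic $\mu^2\|w_k - w^*\|^2$ piece joining $c_2$. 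Assembling all contributions, pulling out $1/(\gamma-\epsilon_H)$, and regrouping by powers of $\|w_k - w^*\|$ gives the stated $c_0, c_1, c_2$.

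The main obstacle is apportioning the inexact residual. Because $\|r_k\|$ depends quadratically on $\|g_k\|$, and $g_k$ carries both sampling noise and a deterministic component growing linearly in $\|w_k - w^*\|$, the expansion of $\|g_k\|^2$ produces mixed terms contributing to all three $c_j$; one must apply the pointwise bound and triangle inequality before taking expectation, and must use the second-moment bound $\mathbb{E}_k\|g_k - g_*\|^2 \leq v^2/N_{X_k}$ directly rather than squaring $\mathbb{E}_k\|g_k - g_*\|$. A secondary subtlety is turning A4's single-sample second-moment bound into an expected operator-norm bound on the sample-mean deviation $H_k - H$, which is where a matrix-concentration or independence-based variance calculation enters. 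The ball condition (a) with $\delta < 2\mu/L_{N_{S_k}}$ is used only to keep the Taylor expansion of $g_*$ valid in a neighborhood of $w^*$ so that $\|g_*\| \lesssim \|w_k - w^*\|$ holds.
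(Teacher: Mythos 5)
Your proposal is correct and follows essentially the same route as the paper: the paper likewise factors out $[\nabla^2\overline{F}_{S_k}]^{-1}$, splits off the stochastic Newton term $\nabla^2\overline{F}_{S_k}(w_k)(w_k-w^*)-\alpha_k\nabla\overline{F}_{X_k}(w_k)$ (bounded via the same four-way decomposition, packaged in Lemmas \ref{lipschitz_bound_alpha_k}--\ref{stochastic_newton_bound_alpha_k}), and handles the inexactness residual by bounding $\mathbb{E}_k[\|\nabla\overline{F}_{X_k}\|^2]$ through the second-moment gradient bound and $\|\nabla\overline{F}(w_k)\|\leq\mu\|w_k-w^*\|$ exactly as you describe.
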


See Appendix \eqref{proof_of_inexact_eisenprop} for proof of Theorem \ref{inexact_eisenprop}. Assumption (a) states that $w_k$ is sufficiently close to an optimum. Assumption (b) states that eigenvalues of the Hessian are not too negative, and assumption (c) guarantees the Tikhonov regularized Hessian is invertible. Assumption (d) is the Eisenstat-Walker forcing condition. Ideally the constants $c_0,c_1$, and $c_2$ will be as small as possible. The constant $c_0$ will be small when the Monte Carlo approximation of the gradient is good. The constant $c_1$ will be small when the Monte Carlo approximation of the gradient and Hessian are both good, and the full Newton step $\alpha_k=1$ can be taken. The constant $c_2$ will be small when the Hessian is well conditioned.

This theorem says nothing about how expensive the method may be per iteration. The per iteration cost of the method will depend on the spectral properties of the Hessian and the batch sizes. In the next subsections, we will analyze how solving the Newton system approximately, rather than exactly, affects the convergence rate.

\subsection{Low Rank Newton Methods}
Let the spectral decomposition of the Hessian be given as follows:
\begin{equation}
  \nabla^2 F = U\Lambda U^T = \sum_{i=1}^d \lambda_i u_i u_i^T,
\end{equation}
where the eigenvalues $\lambda_i$ are sorted such that $|\lambda_i| \geq |\lambda_j|$ for all $i >j$, and $u_i\in \mathbb{R}^d$ are the corresponding eigenvectors. The objective function is most sensitive to perturbations to $w$ in directions corresponding to eigenvalues of large magnitude, because the energy landscape has large curvature in these directions. The objective function is least sensitive to perturbations of $w$ in directions corresponding to eigenvalues of small magnitude, because the energy landscape is approximately flat in these directions. The magnitude of the eigenvalue $\lambda_i$ is related to how informative the data are to the component of $w$ in the $u_i$ direction; the larger the eigenvalue, the more information one can learn about the parameter in the associated eigenvector direction \cite{Alger2019}. It is therefore important to resolve the subspace spanned by the dominant eigenvectors, whereas resolving the complementary subspace spanned by non-dominant eigenvectors is less important. We therefore consider low rank approximations to the Hessian,
\begin{equation}
  H^{(r)} = [\nabla^2 F]^{(r)} = U_r \Lambda_r U^T_r = \sum_{i=1}^r \lambda_i u_i u_i^T.
\end{equation}

The matrix $H^{(r)}$ has at most rank $r$, so the system $H^{(r)} p = -g$ is degenerate. For any $\gamma \neq -\lambda_i$, the matrix $H^{(r)} + \gamma I$ is invertible. This matrix arises in the case of Levenberg-Marquardt or trust region methods \cite{NocedalWright2006}, or Tikhonov regularization.
\begin{equation} \label{levenberg_tikhonov}
  [H^{(r)} +\gamma I]p_k = -\widehat{g}_k = -\begin{cases}
                            \nabla F & \quad \text{Levenberg-Marquardt}\\
                            \nabla F + \gamma w_k & \quad \text{Tikhonov regularization}
                        \end{cases}
\end{equation}
Equation \eqref{levenberg_tikhonov} can be solved efficiently using the Sherman-Morrison-Woodbury formula:
\begin{equation}\label{low_rank_newton}
        p_k = - \bigg[\frac{1}{\gamma}I_d - \frac{1}{\gamma^2}U_r \bigg(\Lambda_r^{-1} + \frac{1}{\gamma}I_r\bigg)^{-1}U_r^T \bigg]\widehat{g}_k.
\end{equation}
This method therefore interpolates between gradient descent and Newton's method. In $\text{span}\{U_r\}$, the regularized Newton direction is used. In $\text{span}\{U_r\}^\perp$, the regularization preconditioned gradient direction is used.

\subsubsection{Local convergence for the stochastic low rank Newton method}

Suppose that for each $w_k$ and $S_k$, we have the truncated eigenvalue decomposition $H^{(k)}_r  =[\nabla^2 F_{S_k}]_r= U^{(k)}_r \Lambda_r U^{(k)T}_r$ for the empirical risk function, and the iterates
\begin{equation} \label{trunc_newton_update}
  w_{k+1} = w_k - \alpha_k [H_k^{(r)} + \gamma I]^{-1}\nabla \overline{F}_{X_k}(w).
\end{equation}

What is needed are methods that do not require the explicit matrix, and do not need many matrix vector products. Randomized SVD can be used to determine the numerical rank of the Hessian and compute low rank factorizations of the Hessian efficiently without explicitly forming it, using just Hessian-vector products \cite{HalkoMartinssonTropp2011}. Lanczos procedures can also be used for matrix-free low rank factorization \cite{AlainRouxManzagol2019,DauphinPescanuGulcehre2014}, but we prefer randomized SVD because of its robustness to repeated eigenvalues \cite{HalkoMartinssonTropp2011}. We now state a bound for the convergence in expected value of stochastic low rank Newton methods based on SVD.

\begin{restatable}[Local convergence of stochastic low rank Newton]{theorem}{lowrankconv}
\label{lowrankconvergence}
Let $\{w_k\}$ be the iterates generated by \eqref{trunc_newton_update}, let $w^*$ be a stationary point and suppose that assumptions \ref{assumption_a1} - \ref{assumption_a4} hold, then for each $k$
\begin{equation}
    \mathbb{E}_k[\|w_{k+1} - w^*\|] \leq c_0 + c_1 \|w_k - w^*\| + c_2 \|w_k - w^*\|^2,
\end{equation}
where
\begin{subequations}
\begin{align}
  c_0 &=  \frac{\alpha_k v}{|\overline{\lambda_{r}^{(k)}}+\gamma|\sqrt{N_{X_k}}} ,\\
  c_1  &= \frac{1}{|\overline{\lambda_{r}^{(k)}}+\gamma|}\bigg[L_{N_{S_k}}|1- \alpha_k| + \mathcal{E}|\overline{\lambda_{r+1}^{(k)}}| + \gamma + \frac{\sigma}{\sqrt{N_{S_k}}} \bigg],\\
  c_2 &= \frac{M}{2|\overline{\lambda_{r}^{(k)}}+\gamma|}.
\end{align}
\end{subequations}
Here we define $\overline{\lambda_{r}^{(k)}} = \mathbb{E}_k[\lambda_{r}^{(k)}]$. The error coefficient $\mathcal{E} = 1$ when the truncated low rank spectral decomposition is exact, and $\mathcal{E} = \bigg(1+4\frac{\sqrt{d(r+p)}}{p-1}\bigg)$ in the case that it is calculated using randomized SVD.

\end{restatable}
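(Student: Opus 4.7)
The plan is to mirror the error-recursion argument used for Theorem \ref{inexact_eisenprop}, specializing the inexactness to the rank-$r$ truncated Hessian $H_k^{(r)}$ and then absorbing the extra truncation error via the randomized SVD expectation bound of Halko, Martinsson, and Tropp.

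First, I would subtract $w^*$ from both sides of \eqref{trunc_newton_update}, use $\nabla \overline F(w^*) = 0$, and apply the fundamental theorem of calculus to the population gradient difference. With $\bar H_k^{(F)} := \int_0^1 \nabla^2 F(w^* + t(w_k - w^*))\,dt$, this produces
\[
w_{k+1} - w^* = [H_k^{(r)} + \gamma I]^{-1}\Bigl\{\bigl[(H_k^{(r)} + \gamma I) - \alpha_k(\bar H_k^{(F)} + \gamma I)\bigr](w_k - w^*) - \alpha_k\bigl[\nabla \overline F_{X_k}(w_k) - \nabla \overline F(w_k)\bigr]\Bigr\}.
\]
Rewriting the bracketed operator as
\[
(1-\alpha_k)(H_k^{(r)} + \gamma I) + \alpha_k(H_k^{(r)} - \bar H_k^{(F)})
\]
cleanly separates the step-length mismatch from the curvature-approximation mismatch.

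The curvature mismatch further splits into three physically meaningful pieces,
\[
H_k^{(r)} - \bar H_k^{(F)} = \underbrace{[H_k^{(r)} - H_k]}_{\text{truncation}} + \underbrace{[H_k - \nabla^2 F(w_k)]}_{\text{subsampling}} + \underbrace{[\nabla^2 F(w_k) - \bar H_k^{(F)}]}_{\text{Hessian variation}},
\]
where $H_k = \nabla^2 F_{S_k}(w_k)$. Taking conditional expectations, assumption \ref{assumption_a2} bounds the gradient-sampling residual by $v/\sqrt{N_{X_k}}$ (yielding $c_0$), assumption \ref{assumption_a4} bounds the Hessian-subsampling term by $\sigma/\sqrt{N_{S_k}}$, assumption \ref{assumption_a1} produces the $L_{N_{S_k}}|1-\alpha_k|$ and $\gamma$ contributions of $c_1$ via the operator-norm bound on $(1-\alpha_k)(H_k^{(r)} + \gamma I)$, and assumption \ref{assumption_a3} controls the Hessian variation by $\tfrac{M}{2}\|w_k - w^*\|$, which after multiplication by $\|w_k - w^*\|$ becomes the quadratic coefficient $c_2$.

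The main obstacle is bounding the truncation error $\mathbb{E}_k[\|H_k^{(r)} - H_k\|]$. For an exact truncated SVD this equals $|\lambda_{r+1}^{(k)}|$, giving $\mathcal{E} = 1$. For the randomized SVD with oversampling $p$, I would invoke the Halko--Martinsson--Tropp expectation estimate
\[
\mathbb{E}\bigl[\|H_k^{(r)} - H_k\|\bigr] \leq \Bigl(1 + \tfrac{4\sqrt{d(r+p)}}{p-1}\Bigr)|\lambda_{r+1}^{(k)}|,
\]
taken over the randomized sketch independently of $S_k$, and then pass to $\overline{\lambda_{r+1}^{(k)}} = \mathbb{E}_k[\lambda_{r+1}^{(k)}]$ via conditioning. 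Finally, collecting all terms under the operator-norm bound $\|[H_k^{(r)} + \gamma I]^{-1}\| \leq |\overline{\lambda_r^{(k)}} + \gamma|^{-1}$ and using linearity of expectation yields the stated $c_0$, $c_1$, $c_2$.
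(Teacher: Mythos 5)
Your proposal is correct and takes essentially the same approach as the paper: the appendix proof likewise factors out $[H_k^{(r)}+\gamma I]^{-1}$, splits the error by the triangle inequality into a gradient Monte Carlo term (bounded by $v/\sqrt{N_{X_k}}$), a Hessian subsampling term (bounded by $\sigma/\sqrt{N_{S_k}}$), a truncation term, and a Lipschitz/step-length term, and invokes the Halko--Martinsson--Tropp expectation bound for the randomized truncation factor $\mathcal{E}$. Your bookkeeping via the averaged Hessian $\bar H_k^{(F)}$ places $\gamma$ and $\alpha_k$ slightly differently than the paper's Lemmas (e.g.\ $(L_{N_{S_k}}+\gamma)|1-\alpha_k|$ in place of $L_{N_{S_k}}|1-\alpha_k|+\gamma$, and $\alpha_k M/2$ in place of $M/2$), but these quantities are dominated by the stated constants whenever $\alpha_k\in(0,1]$, so the claimed bound follows.
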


See Appendix \ref{low_rank_appendix} for proof. For fast convergence, we want the constants $c_0,c_1$ and $c_2$ to be as small as possible. The constant $c_0$ is small when the Monte Carlo approximation error for the gradient is small. The constant $c_1$ has errors from step length (if $\alpha \neq 1$), the Hessian Monte Carlo approximation, the low rank Hessian approximation, and in the case of randomized SVD, the additional approximation factor $\mathcal{E}$. When the Hessian has low rank, the approximation error by low rank factorization will be small. The Hessian is often low rank in machine learning applications \cite{AlainRouxManzagol2019,GhorbanKrishnanXiaoi2019,SagunBottouLeCun2016}. We also observe that the Hessian has low rank in our numerical experiments. The constant $c_2$ will be small when the Hessian is well conditioned.

%%%%%%%%%%%%%%%%%%%%%%%%%%%%%%%%%%%%%%%%%%%%%%%%%%%%%%%%%%%%%%%%%%%%%%%%%%%%%%%%%%%%%%%%%%%%%%%%%%%%%%%%%%%%%%%%%%%%%%%%%%%%%%%%%%%%%%%%%%%%%%%%%%%%%%%%%%%%%%%%%%%%%%%%%%%%%%%%%%%%%%%%%%%%%%%%%%%%%%%%%%%%%%%%%%%%%%%%%%%%%%

\subsubsection{Low Rank Saddle Free Newton}

If the Hessian is invertible without regularization, exact Newton rescales the negative gradient component-wise in the Hessian eigenbasis, by the corresponding eigenvalue,
\begin{equation}
  p = -[\nabla^2F]^{-1}\nabla F = -\sum_{i=1}^d \frac{1}{\lambda_i}(\nabla F^Tu_i)u_i.
\end{equation}
When an eigenvalue is negative, the components of the gradient in this direction will change sign and point towards the saddle point, instead of away. Therefore exact Newton iterates may converge to saddle points. One remedy to this is the SFN algorithm, in which negative eigenvalues of the Hessian are flipped to be positive \cite{DauphinPescanuGulcehre2014,GillMurrayWright1981}. In the SFN method, one solves $|\nabla^2 F|p = -\nabla F$, where $|\nabla F| = U|\Lambda|U^T$. We propose a low rank version of this, using the Sherman-Morrison-Woodbury formula.

\begin{algorithm}[H]\label{lrsfnewton}
\SetAlgoLined
Given $w_0$\\
\While{ not converged }{
  \If{$\|\nabla \overline{F}_{X_k}\| \leq \epsilon_g$ and $\lambda_\text{min}(\nabla^2 F_{S_k}) \geq - \epsilon_H$}{
  break
  }
  Calculate $U^{(k)}_r \Lambda_r U^{(k)T}_r \approx H^{(r)}$ via randomized SVD \cite{HalkoMartinssonTropp2011}\\
  Calculate $p_k = - \bigg[\frac{1}{\gamma}I_d - \frac{1}{\gamma^2}U_r^{(k)} \bigg(|\Lambda^{(k)}_r|^{-1} + \frac{1}{\gamma}I_r\bigg)^{-1}U^{(k)T}_r \bigg]\nabla \overline{F}_{X_k}$\\
  $\alpha_k$ given or computed via line search\\
  $w_{k+1} = w_k + \alpha_k p_k$
}
\caption{Randomized Low Rank Saddle Free Newton}
\end{algorithm}

When the Hessian is positive definite in $\text{span}(U^{(k)}_r)$ ($\lambda_i >0$ for $i \leq r$), this method is identical to randomized low rank Newton algorithm. In this case convergence will be identical to that of Theorem \ref{lowrankconvergence}. The low rank saddle free Newton method is designed to escape indefinite regions that low rank Newton or other methods may stagnate in, by incorporating directions of negative curvature.

%%%%%%%%%%%%%%%%%%%%%%%%%%%%%%%%%%%%%%%%%%%%%%%%%%%%%%%%%%%%%%%%%%%%%%%%%%%%%%%%%%%%%%%%%%%%%%%%%%%%%%%%%%%%%%%%%%%%%%%%%%%%%%%%%%%%%%%%%%%%%%%%%%%%%%%%%%%%%%%%%%%%%%%%%%%%%%%%%%%%%%%%%%%%%%%%%%%%%%%%%%%%%%%%%%%%%%%%%%%%%%
%%%%%%%%%%%%%%%%%%%%%%%%%%%%%%%%%%%%%%%%%%%%%%%%%%%%%%%%%%%%%%%%%%%%%%%%%%%%%%%%%%%%%%%%%%%%%%%%%%%%%%%%%%%%%%%%%%%%%%%%%%%%%%%%%%%%%%%%%%%%%%%%%%%%%%%%%%%%%%%%%%%%%%%%%%%%%%%%%%%%%%%%%%%%%%%%%%%%%%%%%%%%%%%%%%%%%%%%%%%%%%
%%%%%%%%%%%%%%%%%%%%%%%%%%%%%%%%%%%%%%%%%%%%%%%%%%%%%%%%%%%%%%%%%%%%%%%%%%%%%%%%%%%%%%%%%%%%%%%%%%%%%%%%%%%%%%%%%%%%%%%%%%%%%%%%%%%%%%%%%%%%%%%%%%%%%%%%%%%%%%%%%%%%%%%%%%%%%%%%%%%%%%%%%%%%%%%%%%%%%%%%%%%%%%%%%%%%%%%%%%%%%%

\subsection{Inexact Newton-Krylov Methods}

Krylov methods are the preferred linear solver for inexact Newton methods. In this section we discuss their extension to stochastic nonconvex problems.
\begin{definition}[Krylov Subspace] \label{krylov_space}
Given $A:\mathbb{R}^d \rightarrow \mathbb{R}^d$ and $y \in \mathbb{R}^d$, we define the $m^\text{th}$ Krylov subspace as the linear subspace $\mathcal{K}_m(A,y) \subset \mathbb{R}^d$
\begin{equation}
  \mathcal{K}_m(A,y) = \text{span}\{y,Ay,\dots,A^{m-1}y\}.
\end{equation}
\end{definition}
Given $p_0 =0$ as an initial guess, stochastic Newton-Krylov methods approximate
\begin{equation}
  p = -[\nabla^2\overline{F}_{S_k}]^{-1}\nabla \overline{F}_{X_k} \approx p_m \in \mathcal{K}_m(\nabla^2\overline{F}_{S_k},-\nabla \overline{F}_{X_k})
\end{equation}
via a Galerkin projection. Similar to randomized low rank methods, Krylov methods require only the action of a matrix on vectors; access to the entries of the matrix is not required. In this work the Krylov methods we consider are conjugate gradients (CG), the minimal residual method (MINRES), and the generalized minimal residual method (GMRES). GMRES applies to indefinite matrices, MINRES applies to symmetric indefinite matrices. CG can be adapted to symmetric indefinite matrices by a simple modification. A generic stochastic inexact Newton-Krylov method is described below.

\begin{algorithm}[H]\label{inkrylov}
\SetAlgoLined
Given $w_0$\\
\While{ not converged }{
  \If{$\|\nabla \overline{F}_{X_k}\| \leq \epsilon_g$ and $\lambda_\text{min}(\nabla^2 F_{S_k}) \geq - \epsilon_H$}{
  break
  }
  Given $\|\nabla \overline{F}_{X_k}\|$ compute $\eta_k$ via Eisenstat-Walker\\
  Solve $\|\nabla^2 \overline{F}_{S_k}p_k + \nabla \overline{F}_{X_k}\| \leq \eta_k \|\nabla \overline{F}_{X_k}\|$ via a Krylov method\\
  $\alpha_k$ given or computed via line search\\
  $w_{k+1} = w_k + \alpha_k p_k$
}
\caption{Inexact Newton-Krylov Methods}
\end{algorithm}

\subsubsection{Local convergence rates}

In the case that CG is used for the linear solve, Bollapragada et al.\ have derived a local convergence rate for the \textit{semi-stochastic} case, in which the gradient is not subsampled \cite{BollapragadaByrdNocedal2018}. We extend this analysis to the fully stochastic setting, including the dependence of the convergence constants on the parameters $\alpha_k$ and $\gamma$.

\begin{restatable}[Local convergence of stochastic inexact Newton CG (INCG), extension of Lemma 3.1 of \cite{BollapragadaByrdNocedal2018}]{theorem}{incgconvergencerestat}
\label{incg_convergence}

Let $w^*$ be a stationary point, suppose assumptions \ref{assumption_a1}-\ref{assumption_a4} hold, and the iterates $\{w_k\}$ are generated by the stochastic inexact Newton CG method, the direction $p_k^r$ is found in $r \ll d$ steps (for justification see section \ref{super_eigen_approx}), and there exists $\epsilon_H>0$ such that $-\epsilon_H I \preceq \nabla^2 F_{S_k}(w_k)$ and $\gamma >\epsilon_H$. Then, 

\begin{equation} \label{incg_bound_eq}
  \mathbb{E}_k[\|w_{k+1} - w^*\|] \leq c_0 + c_1 \|w_k - w^*\| + c_2 \|w_k - w^*\|^2
\end{equation}
where
\begin{subequations}
\begin{align}
  c_0 &= \frac{\alpha_k v}{(\gamma - \epsilon_H)\sqrt{N_{X_k}}} \\
  c_1 &=  \frac{1}{\gamma - \epsilon_H}\bigg[ L_{N_{S_k}}|1 - \alpha_k| + \frac{\sigma}{\sqrt{N_{S_k}}} +2\alpha_k L_{N_{S_k}}\sqrt{\kappa_{N_{S_k}}} \bigg(\frac{\sqrt{\kappa_{N_{S_k}}} - 1}{\sqrt{\kappa_{N_{S_k}}} + 1 }\bigg)^r \bigg]  \\
  c_2 &= 
  \frac{M}{2(\gamma - \epsilon_H)}, 
\end{align}
and $\kappa_{N_{S_k}}$ is the condition number of the Tikhonov regularized Hessian.
\end{subequations}

\end{restatable}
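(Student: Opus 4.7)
The plan is to mirror the proof strategy of Theorem \ref{inexact_eisenprop} but to replace the generic Eisenstat--Walker inexactness bound with the classical polynomial convergence estimate for CG, and then to track how that estimate contributes to each of the three coefficients $c_0, c_1, c_2$. Let $\bar H_k := \nabla^2 \overline{F}_{S_k}(w_k)$ and $\bar g_k := \nabla \overline{F}_{X_k}(w_k)$, and let $p_k^\star := -\bar H_k^{-1} \bar g_k$ denote the exact subsampled Newton step. The first move is to split
\begin{equation*}
  w_{k+1} - w^\star = \bigl[(w_k - w^\star) + \alpha_k p_k^\star\bigr] + \alpha_k\bigl(p_k^r - p_k^\star\bigr),
\end{equation*}
take norms, and apply the triangle inequality. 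Taking conditional expectation $\mathbb{E}_k$ then reduces the task to bounding two pieces separately.

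First I would control the \emph{exact subsampled Newton} piece $(w_k-w^\star)+\alpha_k p_k^\star$ by exactly the same argument used for Theorem \ref{inexact_eisenprop}: Taylor-expand $\nabla\overline{F}(w_k)$ about $w^\star$ to pick up an $O(\|w_k-w^\star\|^2)$ remainder via \ref{assumption_a3}, absorb $\nabla\overline{F}_{X_k}(w_k)-\nabla\overline{F}(w_k)$ and $\nabla^2\overline{F}_{S_k}(w_k)-\nabla^2\overline{F}(w_k)$ as sampling noise controlled by \ref{assumption_a2} and \ref{assumption_a4}, and use $\bar H_k \succeq (\gamma-\epsilon_H)I$ from hypothesis (c) (plus assumption (b)) to invert. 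This produces exactly the $c_0$ coefficient $\alpha_k v/[(\gamma-\epsilon_H)\sqrt{N_{X_k}}]$, the $L_{N_{S_k}}|1-\alpha_k|$ and $\sigma/\sqrt{N_{S_k}}$ contributions to $c_1$, and the $M/[2(\gamma-\epsilon_H)]$ contribution to $c_2$.

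For the \emph{CG truncation} piece I would invoke the standard energy-norm bound $\|p_k^r-p_k^\star\|_{\bar H_k} \le 2\bigl((\sqrt{\kappa_{N_{S_k}}}-1)/(\sqrt{\kappa_{N_{S_k}}}+1)\bigr)^r\|p_k^\star\|_{\bar H_k}$ starting from $p_k^0=0$, convert to the Euclidean norm at a cost of $\sqrt{\kappa_{N_{S_k}}}$, and use $\|p_k^\star\|_2 \le \|\bar H_k^{-1}\|\,\|\bar g_k\| \le (\gamma-\epsilon_H)^{-1}\|\bar g_k\|$. The final step is to decompose $\|\bar g_k\|$ as $\|\nabla\overline{F}(w_k)\|+\|\nabla\overline{F}_{X_k}(w_k)-\nabla\overline{F}(w_k)\|$ and bound the deterministic piece by $L_{N_{S_k}}\|w_k-w^\star\|$ via Lipschitzness of $\nabla\overline{F}$ around the stationary point $w^\star$, with the stochastic piece absorbed into the $c_0$ term using \ref{assumption_a2}. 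Assembling, this yields the missing contribution $2\alpha_k L_{N_{S_k}}\sqrt{\kappa_{N_{S_k}}}\bigl((\sqrt{\kappa_{N_{S_k}}}-1)/(\sqrt{\kappa_{N_{S_k}}}+1)\bigr)^r/(\gamma-\epsilon_H)$ to $c_1$.

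The main obstacle is bookkeeping rather than a single delicate estimate: ensuring that after the norm conversions the sole condition-number factor is $\sqrt{\kappa_{N_{S_k}}}$ (not $\kappa_{N_{S_k}}$), and that the gradient sampling error which enters through $\|\bar g_k\|$ lands in $c_0$ (multiplied by the CG decay factor, which is $\le 1$ and can be bounded by $1$) rather than producing a cross term that corrupts the structure of \eqref{incg_bound_eq}. A second subtlety is justifying that under (b) and $\gamma>\epsilon_H$ the regularized subsampled Hessian is SPD so that classical CG (and its energy-norm convergence bound) applies; this follows from assumption (b) combined with hypothesis $\gamma>\epsilon_H$. Once these are handled, the three coefficients combine linearly in $\|w_k - w^\star\|$ and quadratically in $\|w_k-w^\star\|^2$ as stated.
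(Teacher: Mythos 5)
Your decomposition is the same one the paper uses: split $w_{k+1}-w^*$ into the exact regularized subsampled Newton step plus $\alpha_k(p_k^r-p_k^\star)$, bound the first piece with the same Taylor-expansion/sampling-error lemmas that drive Theorem \ref{inexact_eisenprop} (Lemma \ref{stochastic_newton_bound_alpha_k} in the appendix), and bound the second by the classical CG energy-norm estimate converted to the Euclidean norm at a cost of $\sqrt{\kappa_{N_{S_k}}}$, so the route is essentially identical and the derivation of $c_2$ and of the first two terms of $c_1$ goes through exactly as you describe. The one point of divergence is the last step of the CG term. The paper simply cites Lemma 3.1 of \cite{BollapragadaByrdNocedal2018}, which is proved in the semi-stochastic setting where the gradient is exact, so that $\|\nabla \overline{F}(w_k)\|\leq L_{N_{S_k}}\|w_k-w^*\|$ with no sampling error; you correctly observe that with a subsampled gradient an additional $v/\sqrt{N_{X_k}}$ term enters through $\|\bar g_k\|$. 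However, your claim that this lands harmlessly in $c_0$ because the accompanying factor ``is $\leq 1$'' is not right: writing $\rho=(\sqrt{\kappa_{N_{S_k}}}-1)/(\sqrt{\kappa_{N_{S_k}}}+1)$, the factor is $2\sqrt{\kappa_{N_{S_k}}}\,\rho^r$, which can exceed $1$ for large condition number and small $r$, so your argument actually yields $c_0 = \frac{\alpha_k v}{(\gamma-\epsilon_H)\sqrt{N_{X_k}}}\bigl(1+2\sqrt{\kappa_{N_{S_k}}}\,\rho^r\bigr)$ rather than the constant stated in the theorem. This is a bookkeeping discrepancy rather than a failure of the method --- and it in fact exposes the same imprecision in the paper's own proof, which silently reuses the semi-stochastic bound for the fully stochastic case; to match the stated constants exactly one must either accept the inflated $c_0$ or impose an extra assumption controlling $\|\nabla \overline{F}_{X_k}(w_k)\|$ directly.
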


See Appendix \ref{proof_of_incg_convergence} for proof. For fast convergence, we want the constants $c_0,c_1$ and $c_2$ to be as small as possible. The constant $c_0$ is small when the Monte Carlo approximation error for the gradient is small. The term $c_1$ will be small when the full Newton step ($\alpha_k=1$) can be taken, when the Monte Carlo approximation of the Hessian is accurate, and the linear solve error is small after $r$ steps of CG. The constant $c_2$ will be small when the Hessian is well conditioned.

\begin{remark}
In each of the theorems up to this point (Theorem \ref{inexact_eisenprop}, Theorem \ref{lowrankconvergence} and Theorem \ref{incg_convergence}), the constant $c_0$ depends only on the Monte Carlo error in the gradient calculation. In the semi-stochastic setting each of these convergence rates is then linear-quadratic. These bounds can then be used to derive conditions for super-linear convergence in the semi-stochastic case, as in \cite{BollapragadaByrdNocedal2018}.
\end{remark}

Worst case bounds for Krylov method convergence can often be established based on the condition number of the matrix. The convergence of Krylov methods will more generally depend on the entire spectrum of the Hessian, and will benefit from spectral clustering, as we will see in the next section. Preconditioners can be used to enhance convergence by reducing the condition number of the matrix, or clustering the spectrum. In the case of MINRES, worst case bounds can be established based on the condition number. GMRES achieves superior convergence if the spectrum of $H$ resides in an interval that does not include the origin. We present the following result about the convergence rate of stochastic inexact Newton GMRES and MINRES algorithms (INGMRES and INMINRES).

\begin{restatable}[Local convergence of stochastic inexact Newton GMRES and MINRES]{theorem}{gmresconvrestat}
\label{gmres_convergence}
Let $w^*$ be a stationary point, suppose that assumptions \ref{assumption_a1}-\ref{assumption_a5} hold and we have that additionally for some $\delta >0$, $-\epsilon_H I \preceq \nabla^2 F_{S_k}$ for all $S_k$ and for all $w \in B_\delta(w^*)$ and $\gamma > \epsilon_H $, and the direction $p_k^r$ is found in $r \ll d$ steps. Then we have the following expected value convergence rate bound for the iterates of the stochastic inexact Newton GMRES/MINRES methods:
\begin{equation} \label{minres_gmres_bd_eq}
  \mathbb{E}_k[\|w_{k+1} - w^*\|] \leq c_0 + c_1 \|w_k - w^*\| + c_2 \|w_k - w^*\|^2
\end{equation}
where
\begin{subequations}
\begin{align}
  c_0 &= \frac{\alpha_k }{\gamma - \epsilon_H} \bigg(\frac{ v}{\sqrt{N_{X_k}}} + \frac{ \epsilon_g }{(\gamma - \epsilon_H)}\mathcal{E} \bigg)\\
  c_1 &= \frac{1}{\gamma - \epsilon_H}\bigg(L_{N_{S_k}}|1 - \alpha_k| +  \frac{\sigma}{\sqrt{N_{S_k}}} + \frac{\alpha_k L_{N_{X_k}}}{(\gamma - \epsilon)}\mathcal{E}\bigg) \\
  c_2 &= 
  \frac{M}{2(\gamma - \epsilon_H)}. 
\end{align}
\end{subequations}
When the GMRES solver is used,
\begin{subequations}
\begin{align}
  &\mathcal{E} = \frac{L_{N_{S_k}}}{\gamma - \epsilon_H} \frac{C_r(\frac{a}{d})}{|C_r(\frac{c}{d})|} \\
  a = (L_{N_{S_k}} - \gamma + \epsilon_H) +2\epsilon, \quad &c = \frac{1}{2}(L_{N_{S_k}} + \gamma - \epsilon_H), \quad d = \frac{1}{2}(L_{N_{S_k}} - \gamma + \epsilon_H)
\end{align}
\end{subequations}
and $C_r$ is the $r^{th}$ order Chebyshev polynomial. For MINRES,
\begin{equation}
  \mathcal{E} = \bigg(1 - \frac{(\gamma - \epsilon_H)^2}{L_{N_{S_k}}^2}\bigg)^\frac{r}{2}.
\end{equation}
\end{restatable}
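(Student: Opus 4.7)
The plan is to mirror the structure of the proof of Theorem~\ref{incg_convergence}, replacing the CG residual bound with the appropriate GMRES and MINRES bounds. I would start from
\[
w_{k+1} - w^* \;=\; (w_k - w^*) + \alpha_k p_k^r,
\]
where $p_k^r$ is the $r$-th Krylov iterate for the regularized subsampled Newton system, and introduce the exact subsampled regularized Newton step $p_k^* = -[\nabla^2 \overline{F}_{S_k}(w_k) + \gamma I]^{-1}\nabla \overline{F}_{X_k}(w_k)$. Writing $p_k^r = p_k^* + (p_k^r - p_k^*)$ and then adding and subtracting the full-data exact Newton step at $w_k$ splits the error into four pieces: a deterministic term from $\alpha_k \ne 1$ together with the Lipschitz Taylor remainder, a Hessian subsampling term, a gradient subsampling term, and a Krylov truncation term.

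The first three pieces are handled exactly as in Theorem~\ref{incg_convergence}: assumption \ref{assumption_a3} gives the $\tfrac{M}{2}\|w_k-w^*\|^2$ remainder underlying $c_2$; assumptions \ref{assumption_a2} and \ref{assumption_a4} yield the Monte Carlo contributions $v/\sqrt{N_{X_k}}$ and $\sigma/\sqrt{N_{S_k}}$ after applying $\mathbb{E}_k$ (using the same Jensen/matrix concentration arguments imported from \cite{BollapragadaByrdNocedal2018}); and \ref{assumption_a1} together with $\gamma > \epsilon_H$ makes $\nabla^2 \overline{F}_{S_k} + \gamma I$ invertible with $\|[\nabla^2 \overline{F}_{S_k}+\gamma I]^{-1}\| \le (\gamma - \epsilon_H)^{-1}$, supplying the leading factor in each coefficient.

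The essential new ingredient is the Krylov truncation bound. Writing the residual $r_k^r = \nabla^2 \overline{F}_{S_k} p_k^r + \nabla \overline{F}_{X_k}$ and using $p_k^r - p_k^* = [\nabla^2 \overline{F}_{S_k}+\gamma I]^{-1} r_k^r$ gives $\|p_k^r - p_k^*\| \le (\gamma - \epsilon_H)^{-1}\|r_k^r\|$. For GMRES, the classical polynomial min--max residual characterization, specialized to the Chebyshev polynomial on an ellipse enclosing the spectrum of the regularized subsampled Hessian (and excluding the origin by $\gamma > \epsilon_H$), produces the claimed $C_r(a/d)/|C_r(c/d)|$ factor with the listed $a,c,d$; for MINRES, the standard symmetric Chebyshev bound on the spectral interval $[\gamma-\epsilon_H,\, L_{N_{S_k}}]$ gives $(1 - (\gamma-\epsilon_H)^2/L_{N_{S_k}}^2)^{r/2}$. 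In both cases $\mathcal{E}$ multiplies $\|\nabla \overline{F}_{X_k}(w_k)\|$, which I then split as $\|\nabla \overline{F}_{X_k}(w_k) - \nabla \overline{F}_{X_k}(w^*)\| + \|\nabla \overline{F}_{X_k}(w^*)\|$. The first summand is at most $L_{N_{X_k}}\|w_k-w^*\|$ by \ref{assumption_a1} and feeds into $c_1$; the second, via the new assumption \ref{assumption_a5}, has conditional expectation at most $\epsilon_g$ and feeds into $c_0$. This explains why \ref{assumption_a5} is invoked here but was not needed in Theorem~\ref{incg_convergence}, and also why no $\sqrt{\kappa}$ factor appears (that was a CG-specific artifact of the energy-norm optimality).

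The main obstacle is the geometric bookkeeping for the GMRES estimate: one must verify that the ellipse used in the Chebyshev extremal argument strictly excludes the origin under $\gamma > \epsilon_H$, which is exactly what pins down the values of $a$, $c$, and $d$ in the statement (and controls the extra $2\epsilon$ offset accounting for possible imaginary excursions of the subsampled spectrum). Once that is in place, collecting the coefficients of $1$, $\|w_k-w^*\|$, and $\|w_k-w^*\|^2$ and taking $\mathbb{E}_k$ assembles the three constants exactly as claimed.
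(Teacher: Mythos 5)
Your proposal follows essentially the same route as the paper's proof: the same splitting into a subsampled-Newton error term (bounded via Lemma~\ref{stochastic_newton_bound_alpha_k}, yielding the $M/2$, $L_{N_{S_k}}|1-\alpha_k|$, $\sigma/\sqrt{N_{S_k}}$, and $v/\sqrt{N_{X_k}}$ contributions) plus a Krylov residual term controlled by $\mathcal{E}\,\|\nabla \overline{F}_{X_k}(w_k)\|$ through the inverse bound $(\gamma-\epsilon_H)^{-1}$, with the Chebyshev-based GMRES/MINRES residual estimates and the final splitting $\|\nabla \overline{F}_{X_k}(w_k)\| \leq L_{N_{X_k}}\|w_k-w^*\| + \|\nabla \overline{F}_{X_k}(w^*)\|$ invoking Assumption~\ref{assumption_a5} exactly as the paper does. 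Your add-and-subtract of the exact regularized step $p_k^*$ is algebraically identical to the paper's factoring of the inverse Hessian through the residual, so this is the same proof.
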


See Appendix \eqref{proof_of_gmres_convergence} for proof of Theorem \ref{gmres_convergence}. The constant $c_0$ will be small when the Monte Carlo error for the gradient is small, and the approximation of the linear solve via GMRES is accurate. The constant $c_1$ will be small when the full Newton step $\alpha_k=1$ can be taken, the Monte Carlo error for the Hessian is small, and the approximation of the linear solve via GMRES is accurate. The constant $c_2$ will be small when the Hessian is well conditioned.

\begin{remark}
In Theorem \ref{gmres_convergence} the constant $c_0$ depends not only on the Monte Carlo approximation of the gradient, but also on the error in the Krylov solve $\mathcal{E}$ and the constant $\epsilon_g$ from Assumption A5. In order to derive a linear-quadratic convergence rate from this bound in the semi-stochastic case, one needs to employ the restrictive assumption that $\epsilon_g =0$, i.e. $w^*$ is a local minimum for all of the sample gradients.
\end{remark}

\subsubsection{Superior Approximation for Clustered Eigenvalues} \label{super_eigen_approx}

Krylov subspaces are intimately related to spaces of polynomials. The Krylov subspace $\mathcal{K}_m(A,y)$ is the space of all vectors $x \in \mathbb{R}^d$ that can be written as $x =p(A)y$ where $p \in \mathbb{P}_{m-1}$, the space of all polynomials of degree $m-1$ or less.  If the generating vector $y$ is not degenerate---in the basis of eigenvectors of $A$, none of its components are zero---then there is a natural isomorphism between $\mathbb{P}_{m-1}$ and $\mathcal{K}_m$ defined by \cite{Saad2003}:
\begin{equation}
  \mathbb{P}_{m-1} \ni q \mapsto x = q(A)y \in \mathcal{K}_m(A,y).
\end{equation}

This isomorphism with polynomials allows us to analyze Krylov solution by instead analyzing polynomials. Let $A = U\Lambda U^T$ denote the eigenvalue decomposition of $A$, with $\Lambda = \text{diag}(\lambda_k)$ and $u_k$ is the $k^{th}$ column of $U$. Let $\mathcal{Q}_m$ denote the set of all $m^{th}$ order polynomials with constant term $1$, that is
\begin{equation}
    \mathcal{Q}_m = \{q \in \mathbb{P}_m | q(0) = 1 \}.
\end{equation}
We have the following well-known results for CG, GMRES and MINRES.

\begin{theorem}[Krylov methods and minimum polynomials] \label{krylov_polynomial}
Let $x_m$ be the $m^{th}$ CG iterate for solving $Ax = b$, and $x^* = A^{-1}b$ . The following relationship holds:
\begin{equation}\label{cg_polynomial_eq}
    \|x^* - x_m \|^2_A = \min_{q\in \mathcal{Q}_m}\sum_{k=1}^n \lambda_k q(\lambda_k)^2(u_k^Te_0)^2
\end{equation}
where $e_0 = x^* - x_0$.
Let $x_m$ be the $m^{th}$ GMRES (with no restarts) or MINRES iterate for solving $Ax = b$.  Then the following relationship holds:
\begin{equation} \label{gmres_polynomial_eq}
  \|b - Ax_m\|^2 = \min_{q \in \mathcal{Q}_m} \sum\limits_{k=1}^nq(\lambda_k)^2 (u_k^Tr_0)^2
\end{equation}
where $r_0 = b -Ax_0$ is the initial residual.

\end{theorem}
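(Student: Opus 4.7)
The plan is to exploit the defining variational principles of CG, GMRES, and MINRES to recast the iterate error (for CG) or residual (for GMRES/MINRES) as a polynomial in $A$ applied to the initial error/residual, and then to diagonalize $A$ via its spectral decomposition $A = U \Lambda U^T$. The essential content is packaged into three steps: polynomial parametrization of the affine Krylov subspace, invoking the minimization property, and reduction to a sum over eigenvalues.

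First I would set up the polynomial parametrization. Every element of $x_0 + \mathcal{K}_m(A, r_0)$ has the form $x_0 + \tilde{p}(A) r_0$ for some $\tilde{p} \in \mathbb{P}_{m-1}$. Since $r_0 = b - Ax_0 = A(x^* - x_0) = A e_0$, the error satisfies
\[
x^* - x_m \;=\; e_0 - A\tilde{p}(A) e_0 \;=\; q(A) e_0,
\]
where $q(\lambda) := 1 - \lambda \tilde{p}(\lambda)$. The map $\tilde{p} \mapsto q$ is a bijection from $\mathbb{P}_{m-1}$ onto $\mathcal{Q}_m$ because $q(0) = 1$ and any $q \in \mathcal{Q}_m$ can be factored as $1 - \lambda \tilde{p}(\lambda)$ with $\tilde{p}$ of degree at most $m-1$. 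Likewise $b - Ax_m = r_0 - A\tilde{p}(A) r_0 = q(A) r_0$ under the same correspondence.

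Next I would invoke the variational characterizations of the solvers. The CG iterate $x_m$ is the unique minimizer of $\|x^* - x\|_A$ over $x_0 + \mathcal{K}_m(A, r_0)$, while the GMRES (without restarts) and MINRES iterates minimize $\|b - Ax\|_2$ over the same affine subspace; these are standard properties that I would quote from Saad \cite{Saad2003}. Combined with the bijection $\tilde{p} \leftrightarrow q$ from the previous step, one immediately obtains
\[
\|x^* - x_m\|_A^2 = \min_{q \in \mathcal{Q}_m} \|q(A) e_0\|_A^2, \qquad \|b - Ax_m\|^2 = \min_{q \in \mathcal{Q}_m} \|q(A) r_0\|^2.
\]
Finally, using $A = U \Lambda U^T$ and expanding $e_0 = \sum_k (u_k^T e_0) u_k$ and $r_0 = \sum_k (u_k^T r_0) u_k$ in the orthonormal eigenbasis, a direct calculation gives
\[
\|q(A) e_0\|_A^2 \;=\; e_0^T q(A)^T A\, q(A) e_0 \;=\; \sum_{k=1}^n \lambda_k\, q(\lambda_k)^2 (u_k^T e_0)^2,
\]
and analogously $\|q(A) r_0\|^2 = \sum_{k=1}^n q(\lambda_k)^2 (u_k^T r_0)^2$, yielding \eqref{cg_polynomial_eq} and \eqref{gmres_polynomial_eq} respectively.

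There is no deep obstacle; the work is almost entirely bookkeeping. The subtle points to watch are (i) verifying that $\tilde{p} \mapsto 1 - \lambda\tilde{p}(\lambda)$ really is a bijection between $\mathbb{P}_{m-1}$ and $\mathcal{Q}_m$, so that the minimization over Krylov iterates is equivalent to the minimization over $\mathcal{Q}_m$, and (ii) checking that the symmetry hypotheses implicit in writing the real orthogonal eigendecomposition $A = U\Lambda U^T$ hold for each solver (SPD for CG, symmetric for MINRES, and the symmetric case for the GMRES statement as written). Once these are pinned down, the three displayed computations above close the argument.
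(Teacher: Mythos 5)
Your proof is correct and is precisely the canonical argument the paper itself defers to rather than writes out (the paper only cites Shewchuk for the CG identity and remarks that the GMRES/MINRES case follows similarly): polynomial parametrization of the affine Krylov subspace via $q(\lambda)=1-\lambda\tilde p(\lambda)$, the variational characterizations of the three solvers, and expansion in the orthonormal eigenbasis. The two caveats you flag are the right ones, and neither causes trouble --- surjectivity of $\tilde p \mapsto q(A)e_0$ onto the Krylov subspace is all that is needed even when the generating vector is degenerate, and symmetry of $A$ (implicit throughout the paper, since $A$ is a Hessian) justifies the real orthogonal eigendecomposition used in both identities.
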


These results are canonical; a proof of the result regarding CG is given by Shewchuck \cite{Shewchuk1994}. The GMRES/MINRES result follows from a similar argument. When eigenvalues are clustered, a lower degree polynomial is better able to minimize either \eqref{cg_polynomial_eq} or \eqref{gmres_polynomial_eq}. This means that CG, GMRES, and MINRES will perform better than low rank approximations when the eigenvalues are clustered. Due to the additional $\lambda_k$ in equation \eqref{cg_polynomial_eq}, CG will eliminate errors in the subspaces corresponding to large eigenvalues more aggressively than in the subspaces corresponding to small eigenvalues (GMRES and MINRES do not have this property).

\subsubsection{Newton-Krylov methods and saddle points} \label{newtonkrylovconditioning}

Inexact Newton-Krylov methods can be made robust to saddle points. At a given Krylov iteration with a search direction update $v_m$, an un-normalized Rayleigh quotient $v_m^THv_m$ can be calculated (in the case of CG this term is already calculated). If this quantity is negative, then the vector $v_m$ points in a direction of negative curvature. One can then terminate the Krylov solve early, without modifying the search direction $p_m$. The drawback of the early termination procedure relative to saddle free Newton is that without the spectral decomposition which yields explicit knowledge of the negative curvature direction, one cannot accelerate the escaping of indefinite regions.

\subsection{Comparing costs: gradient vs. Hessian}\label{comparing_costs}

So far the development has been based on the finite sum optimization problem \eqref{empirical_risk_minimization}. We restrict the discussion at this point to neural network training. The dominant costs associated with neural network training are the evaluations of the neural network and its derivatives. The gradient can be formed efficiently using an adjoint process 
(referred to as back propagation in the neural network literature), which amounts to a forward and backward evaluation of the neural network \cite{RumelhartHintonWilliams1988}. The action of the Hessian on a vector can be formed using an adjoint based method, by an additional forward and backward evaluation of the neural network as described by Pearlmutter \cite{Pearlmutter1994}. We refer to the pair of one forward and one backward evaluation of the neural network as a \textit{sweep}.\footnote{Note that the forward evaluation for the gradient will typically be nonlinear, while the backward evaluation for the gradient and the forward and backward evaluations for the Hessian-vector product will be affine since they involve the transpose of the Jacobian of the forward mapping in the case of the gradient, and similar terms for the Hessian. We count these sweeps all the same, even though the Hessian sweeps may be cheaper.} 

For a given iteration of a low rank Newton method with rank $r$ and oversampling parameter $p$, the number of network sweeps used to construct the low rank Hessian approximation can be expressed as follows:
\begin{equation}\label{lowranksweeps}
  \# (\text{Low rank Hessian sweeps}) = 2C(r+p) N_{S_k} .
\end{equation}
Here $C=1,2$ depending on if single pass or double pass algorithms are used for randomized SVD \cite{HalkoMartinssonTropp2011}. We use the double pass algorithm. The total neural network sweeps for the double pass version of the low rank SFN algorithm, including the cost of computing the gradient, is then
\begin{equation}\label{lowranknewtonsweeps}
  \#(\text{Low rank Newton sweeps}) = \bigg(N_{X_k}+  4(r+p) N_{S_k}\bigg).
\end{equation}
The cost of the associated linear algebra for randomized SVD will yield an additional $O(dr^2 + r^3)$ operations. For the inexact Newton-Krylov method with $r$ Krylov iterations,

\begin{equation}\label{inexactsweeps}
  \#(\text{Inexact Newton-Krylov sweeps}) = \bigg(N_{X_k}+ 2r N_{S_k}\bigg).
\end{equation}

Previous analysis (Theorem \ref{inexact_eisenprop}, Theorem \ref{lowrankconvergence}, and Theorem \ref{incg_convergence}, Theorem \ref{gmres_convergence}) suggests that taking $N_{X_k}$ large is important if one desires fast convergence. As for $N_{S_k}$, Bollapragada et al.\ use convergence rates similar to the ones presented in previous analysis to derive conditions on how to increase $N_{S_k}$ to maintain superlinear convergence rates \cite{BollapragadaByrdNocedal2018}. Since the computational cost will grow with this increase in batch size, we take $N_{S_k} \ll N_{X_k}$ fixed. How large then, should $N_{S_k}$ be, so that the subsampled Hessian is representative of the true Hessian? Xu et al.\ \cite{XuRoostaMahoney2017a} use the Operator-Bernstein inequality to derive probabilistic spectral convergence of the uniformly subsampled Hessian, which can be used to guide the choice of $N_{S_k}$.

\begin{theorem}[Complexity of Uniform Sampling, Lemma 4 Xu et al.\ \cite{XuRoostaMahoney2017a}]
Given $0 < \epsilon,\delta < 1$, and assume assumption A1 holds, let
\begin{equation}
  N_{S_k} \geq \frac{16L^2}{\epsilon^2} \log\frac{2d}{\delta}
\end{equation}
at any $w \in \mathbb{R}^d$, and suppose that the elements of $S_k$ are chosen uniformly at random, with or without replacement from $X$. Then the subsampled Hessian obeys the probabilistic bound
\begin{equation}
  \text{Pr}\bigg(||\nabla^2 F_{S_k}(w) - \nabla^2 F(w)|| \leq \epsilon \bigg) \geq 1 - \delta.
\end{equation}
\end{theorem}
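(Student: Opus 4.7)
The plan is to invoke a matrix Bernstein concentration inequality applied to the zero-mean independent random Hermitian matrices
\begin{equation}
  Y_i \;=\; \frac{1}{N_{S_k}}\bigl(\nabla^2 F_i(w) - \nabla^2 F(w)\bigr), \qquad i \in S_k,
\end{equation}
so that $\sum_{i\in S_k} Y_i = \nabla^2 F_{S_k}(w) - \nabla^2 F(w)$ is precisely the deviation we need to control. Because the $i$'s are drawn uniformly, each $Y_i$ is identically distributed and centered, $\mathbb{E}[Y_i]=0$.

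The second step is to extract the two ingredients any Bernstein-type bound requires: a uniform spectral norm bound and a matrix variance proxy. Applying Assumption \ref{assumption_a1} to the singleton sample $S=\{i\}$ and to the full data set gives $\|\nabla^2 F_i(w)\|\le L$ and $\|\nabla^2 F(w)\|\le L$, hence $\|Y_i\|\le 2L/N_{S_k}=: R$. For the variance proxy, using $\mathbb{E}[Y_i^2]\preceq \mathbb{E}[Y_i^2+ (2L/N_{S_k})^2 I]$ in operator order (or directly $\|\mathbb{E}[Y_i^2]\|\le R^2$), summing over the $N_{S_k}$ terms yields $\sigma^2 := \bigl\|\sum_i \mathbb{E}[Y_i^2]\bigr\|\le 4L^2/N_{S_k}$.

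The third step is to invoke the operator Bernstein inequality,
\begin{equation}
  \Pr\!\Bigl(\bigl\|{\textstyle\sum_i Y_i}\bigr\|\ge \epsilon\Bigr)
  \;\le\; 2d\,\exp\!\left(\frac{-\epsilon^2/2}{\sigma^2 + R\epsilon/3}\right),
\end{equation}
substitute the estimates for $R$ and $\sigma^2$, and enforce the right-hand side to be at most $\delta$. For $\epsilon\le 1$ and the proposed $N_{S_k}\ge 16L^2\epsilon^{-2}\log(2d/\delta)$, the linear-in-$\epsilon$ contribution $R\epsilon/3$ is subdominant relative to $\sigma^2$, and solving the resulting inequality for $N_{S_k}$ gives the stated sample complexity. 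For sampling without replacement, one either invokes the Hoeffding-style reduction (the without-replacement variance is dominated by the with-replacement variance) or directly uses the matrix Bernstein inequality for sampling without replacement; either route preserves the same bound up to the universal constants already absorbed into the factor $16$.

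The main obstacle is bookkeeping on the universal constants: the Bernstein exponent mixes a Gaussian-type $\sigma^2$ term with a sub-exponential $R\epsilon$ term, and pinning down the specific factor of $16$ (as opposed to, e.g., $8$ or $32$) requires carefully isolating the regime where the Gaussian tail dominates and verifying that the crossover $\epsilon \lesssim \sigma^2/R$ is automatically satisfied once $N_{S_k}$ meets the stated lower bound. Apart from this constant chasing, the argument is otherwise a direct template application of matrix concentration.
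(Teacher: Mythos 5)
The paper gives no proof of this statement—it is quoted verbatim as Lemma 4 of Xu et al.\ \cite{XuRoostaMahoney2017a}, and the surrounding text notes only that those authors obtain it via the Operator--Bernstein inequality, which is exactly the route you take (centered summands $Y_i$, uniform norm bound $R=2L/N_{S_k}$, variance proxy $4L^2/N_{S_k}$, then solving the tail bound for $N_{S_k}$). Your argument is correct and matches the cited source's approach; the constant $16$ indeed absorbs the sub-exponential term $R\epsilon/3$ in the only nontrivial regime $\epsilon\le 2L$, as you anticipate.
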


In the next section, we will numerically show that the subsampled Hessian still provides a good approximation of the true Hessian, even when $N_{S_k}$ is small relative to $N_{X_k}$. This empirical observation in combination with \eqref{lowranknewtonsweeps} and \eqref{inexactsweeps} suggests that the per iteration cost of a stochastic Newton method is not substantially more than the per iteration cost of gradient descent. But stochastic Newton methods will have superior convergence properties.

%%%%%%%%%%%%%%%%%%%%%%%%%%%%%%%%%%%%%%%%%%%%%%%%%%%%%%%%%%%%%%%%%%%%%%%%%%%%%%%%%%%%%%%%%%%%%%%%%%%%%%%%%%%%%%%%%%%%%%%%%%%%%%%%%%%%%%%%%%%%%%%%%%%%%%%%%%%%%%%%%%%%%%%%%%%%%%%%%%%%%%%%%%%%%%%%%%%%%%%%%%%%%%%%%%%%%%%%%%%%%%

%%%%%%%%%%%%%%%%%%%%%%%%%%%%%%%%%%%%%%%%%%%%%%%%%%%%%%%%%%%%%%%%%%%%%%%%%%%%%%%%%%%%%%%%%%%%%%%%%%%%%%%%%%%%%%%%%%%%%%%%%%%%%%%%%%%%%%%%%%%%%%%%%%%%%%%%%%%%%%%%%%%%%%%%%%%%%%%%%%%%%%%%%%%%%%%%%%%%%%%%%%%%%%%%%%%%%%%%%%%%%%

%%%%%%%%%%%%%%%%%%%%%%%%%%%%%%%%%%%%%%%%%%%%%%%%%%%%%%%%%%%%%%%%%%%%%%%%%%%%%%%%%%%%%%%%%%%%%%%%%%%%%%%%%%%%%%%%%%%%%%%%%%%%%%%%%%%%%%%%%%%%%%%%%%%%%%%%%%%%%%%%%%%%%%%%%%%%%%%%%%%%%%%%%%%%%%%%%%%%%%%%%%%%%%%%%%%%%%%%%%%%%%

\section{Numerical Experiments}\label{numerical_results}

In various computer vision problems such as image classification, convolutional autoencoders are used to learn a low dimensional compressed representation of an image. We consider two autoencoder training problems: MNIST and CIFAR10 \cite{KrizhevskyNairHinton2010,LeCunCortes2010}. We study the convergence and generalization properties of low rank saddle free Newton (LRSFN), inexact Newton CG with early termination (INCG), inexact Newton GMRES (INGMRES), and inexact Newton MINRES (INMINRES). We compare these Newton methods with standard deep learning methods such as gradient descent (GD), stochastic gradient descent (SGD) and Adam \cite{KingmaBa2014}. In addition to studying the convergence and generalization properties of these algorithms, we also study the dominant spectrum of the Hessian along the sequence of iterates generated by INCG. For each problem, we run the optimizers for a fixed number of neural network sweeps, and compare the testing and training errors.

For the semi-stochastic Newton methods and gradient descent, we fix $X_k$ across iterations. In the fully stochastic Newton methods, as well as Adam and SGD, mini-batches of size $0.1N_{X_k}$ are subsampled from a fixed set $X$ at each iteration, we denote these methods SA for stochastic approximation. For all of the Newton methods, $S_k$ is subsampled from $X_k$. For the Newton-Krylov methods and GD, a line search is performed at each iteration, and when the sufficient descent condition is not met, the step with the corresponding $\alpha_k<1$ is taken anyways. This allows for the escape from suboptimal basins as in nonmonotonic line searches; it is justified experimentally, as the empirical risk typically goes down the iteration after the step is taken. The neural network evaluations associated with line search are also counted. 

In order to obtain numerical results robust to outliars, we run numerical experiments with various different initial guesses $w_0 \sim \mathcal{N}(0,I)$. For each run, we compute minimum testing and training error, which we denote $\widehat{F}_k$, and we report sample average statistics for training and testing errors. We take regularization parameter $\gamma =0.1$. Run specifications for each data set are summarized in Table \ref{conv_auto_run_specs}.

\begin{table}[H]
\begin{center}
  \begin{tabular}{|l||c|c|}
    \hline

     \enskip &  MNIST & CIFAR10 \\
    \hline \hline
    $N_{X_k}$ train &  10,000 & 10,000    \\ \hline
    $N_{X_k}$ train, SA &  1,000 & 1,000    \\ \hline
    $N_{S_k}$ train &  1,000 & 1,000    \\ \hline
    $N_{S_k}$ train, SA &  100 & 100    \\ \hline
    $N_{X_k}$ test &  10,000 & 10,000    \\ \hline
    \texttt{n\_filters} & [4,4,4,4] & [4,4,4,8,8,4,4,4]  \\ \hline
    \texttt{filter\_sizes} & [8,4,4,8] & [16,8,8,4,4,8,8,16]  \\ \hline
    \text{activation} & \text{softmax} & \text{softmax} \\ \hline
    $d$ & 517 & 12,315 \\ \hline
    $|x|$ & $(28)^2$ & $3(32)^2$ \\ \hline
    Max sweeps & 1,000,000 & 500,000  \\ \hline
  \end{tabular}
\end{center}
\caption[]{Summary of run specifications for convolutional autoencoders}
\label{conv_auto_run_specs}
\end{table}

We train four layer convolutional autoencoders for the MNIST problem and 8 layer convolutional autoencoders for the CIFAR10 problem. Autoencoders are typically harder to train as they get deeper, beyond that the CIFAR10 data set consists of larger images with more information content. The CIFAR10 problem is the harder problem for this reason. Each convolutional kernel (filter) is square, and the dimensions and number of filters for each layer are denoted in Table \ref{conv_auto_run_specs}. The regularity of the underlying optimization problem depends on the regularity of the Hessian (Assumption A1), which in turn depends on the derivative regularity of the activation functions via the chain rule. For this reason we only consider smooth functions such as softmax.

\subsection{MNIST}
We begin by observing how the choice of $N_{S_k}$ effects convergence for INCG and LRSFN for the MNIST dataset. As discussed in Section \ref{comparing_costs} the computational economy of the Newton methods hinges on $N_{S_k} \ll N_{X_k}$. 

In Figure \ref{mnist_s_k_conv} each run started from the same initial guess. For INCG the runs converged to many different basins, whereas the iterates generated by LRSFN runs follow paths resulting in similar empirical risk values. For the INCG runs, more Hessian information did not necessarily lead to better numerical results; to the contrary the methods often performed worse for larger Hessian batch sizes. The rank of the Hessian should be approximately non-decreasing for increasing batch sizes, so perhaps the conditioning gets worse as the Hessian batch size increases. If that is the case, then the problem could be alleviated by a preconditioner, however the cost of each CG iteration goes up linearly with the batch size so there is still incentive to keep $N_{S_k} \ll N_{X_k}$.

Since LRSFN only uses Hessian information in the dominant Hessian subspace of fixed rank it did not have the convergence of the method effected much by the change in the Hessian batch size. Indeed each run performed about exactly the same in terms of Newton iterations. This suggests that the dominant Hessian subspace may be well approximated for reasonably small $N_{S_k}$, which would imply a lot of redundancy in the information contained in $X_k$.

\begin{figure}[H] 
\centering
\begin{subfigure}[a]{0.5\textwidth}
   \includegraphics[width=\textwidth]{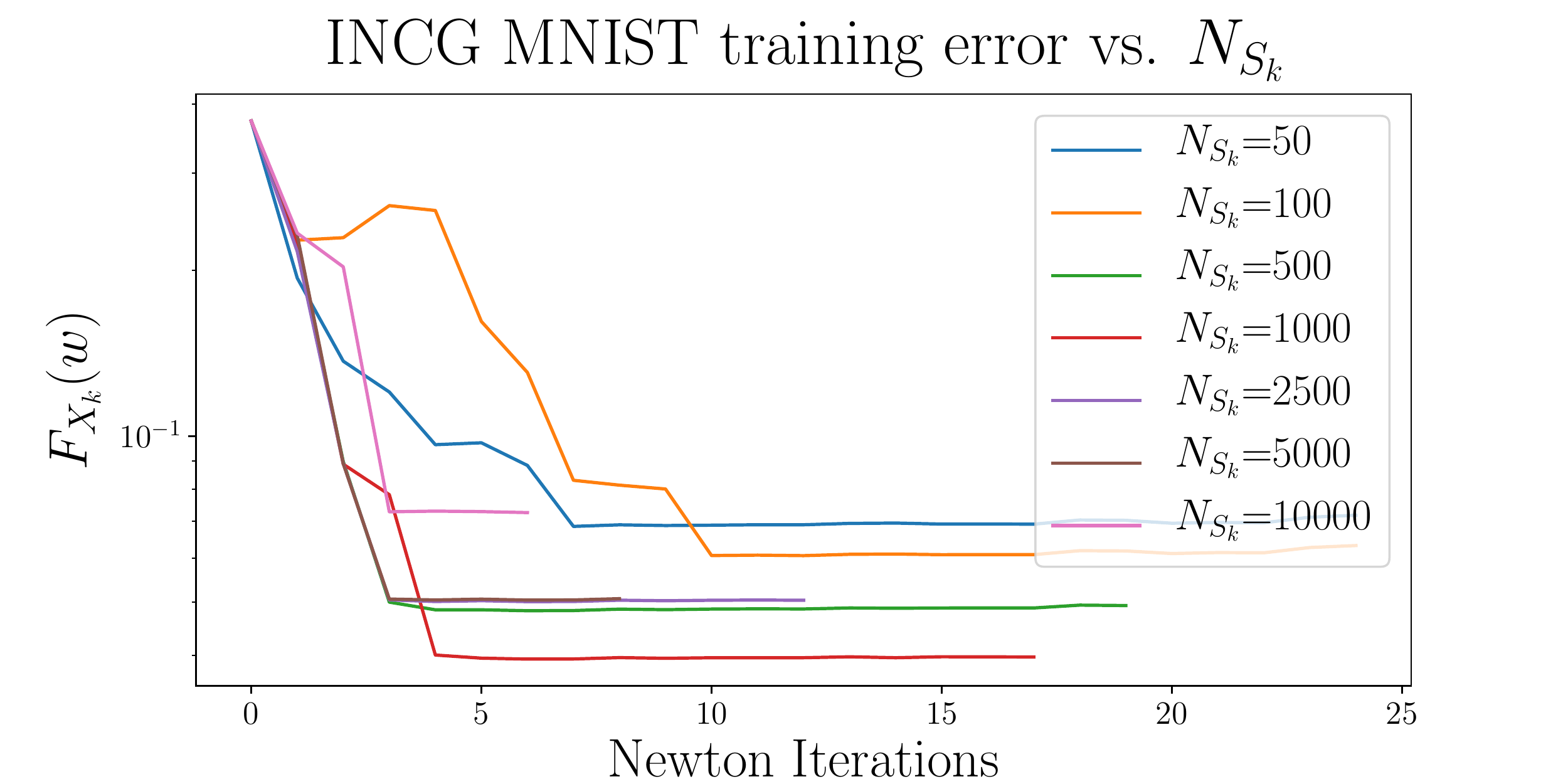}
\end{subfigure}%
\begin{subfigure}[a]{0.5\textwidth}
   \includegraphics[width=\textwidth]{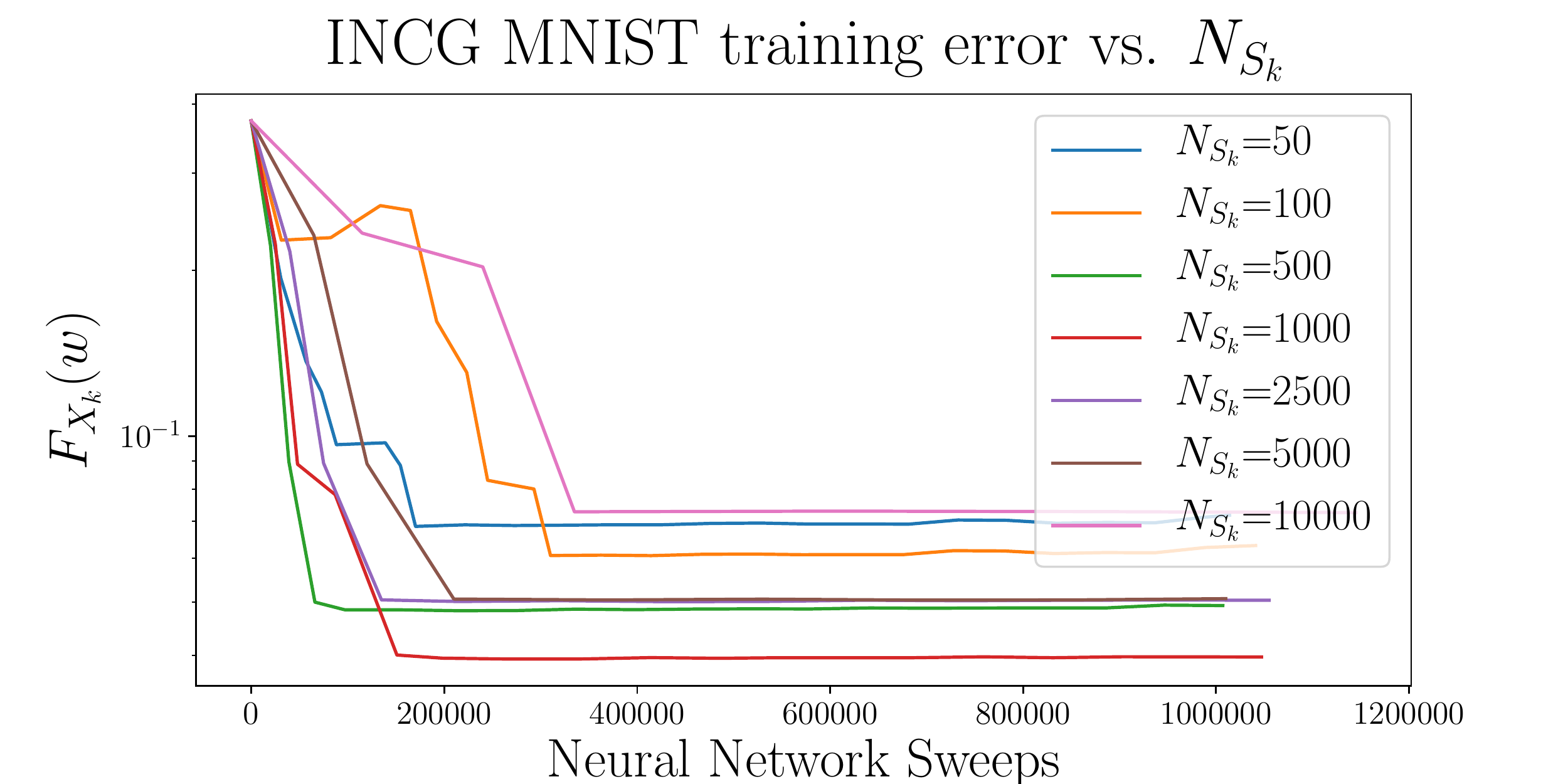}
\end{subfigure}
\begin{subfigure}[b]{0.5\textwidth}
   \includegraphics[width=\textwidth]{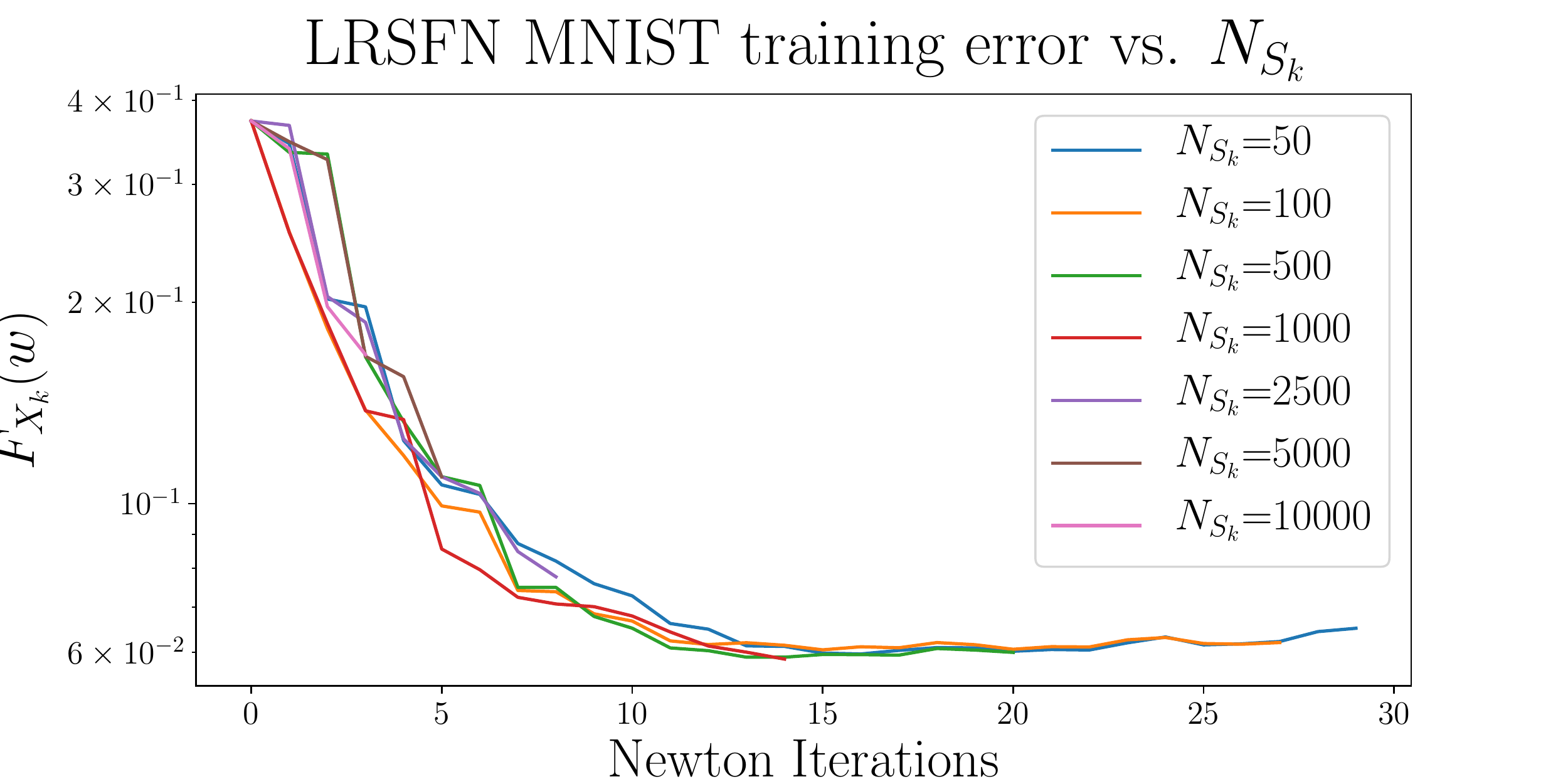}
\end{subfigure}%
\begin{subfigure}[b]{0.5\textwidth}
   \includegraphics[width=\textwidth]{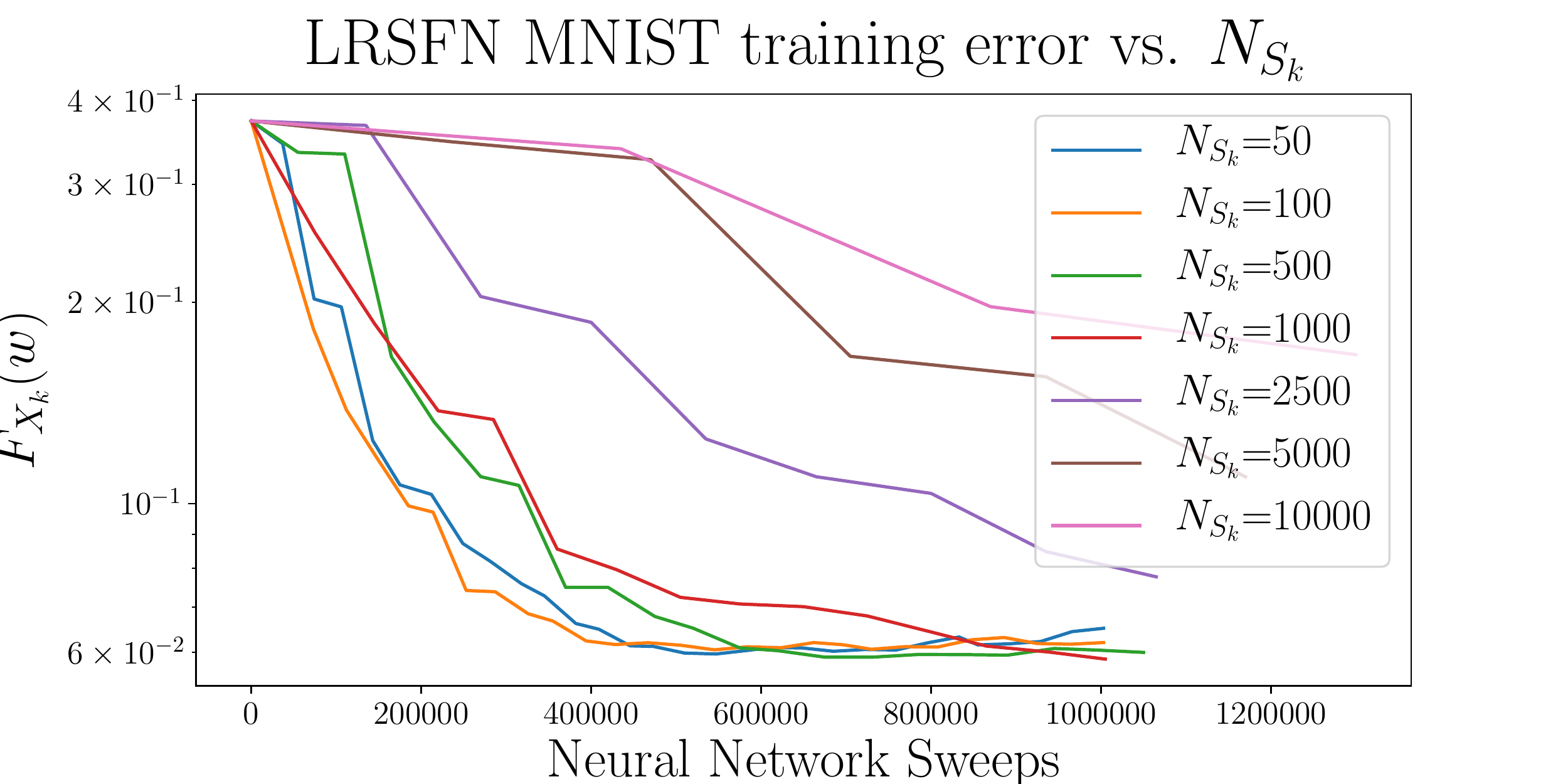}
\end{subfigure}

\caption[]{MNIST training error vs $N_{S_k}$}
\label{mnist_s_k_conv}
\end{figure}

For the MNIST ensemble runs, we take $N_{S_k} = 1000$, since we experimentally observed that $N_{S_k} = 0.1N_{X_k}$ worked well. The Hessian spectra along the path of iterates generated by INCG cluster and exhibit low rank structure throughout training, including for a random initial guess. As seen in Figure \ref{mnist_5000_eigs}, the Hessian spectra calculated using training data match Hessian spectra calculated using testing data. This suggests that the dominant modes of the Hessian did not have too much variance, and that the Hessian training batch size was sufficiently large. The numerical rank seems to be around $60$ or $70$ for this problem.

\begin{figure}[H] 
\centering
\begin{subfigure}[a]{0.5\textwidth}
   \includegraphics[width=\textwidth]{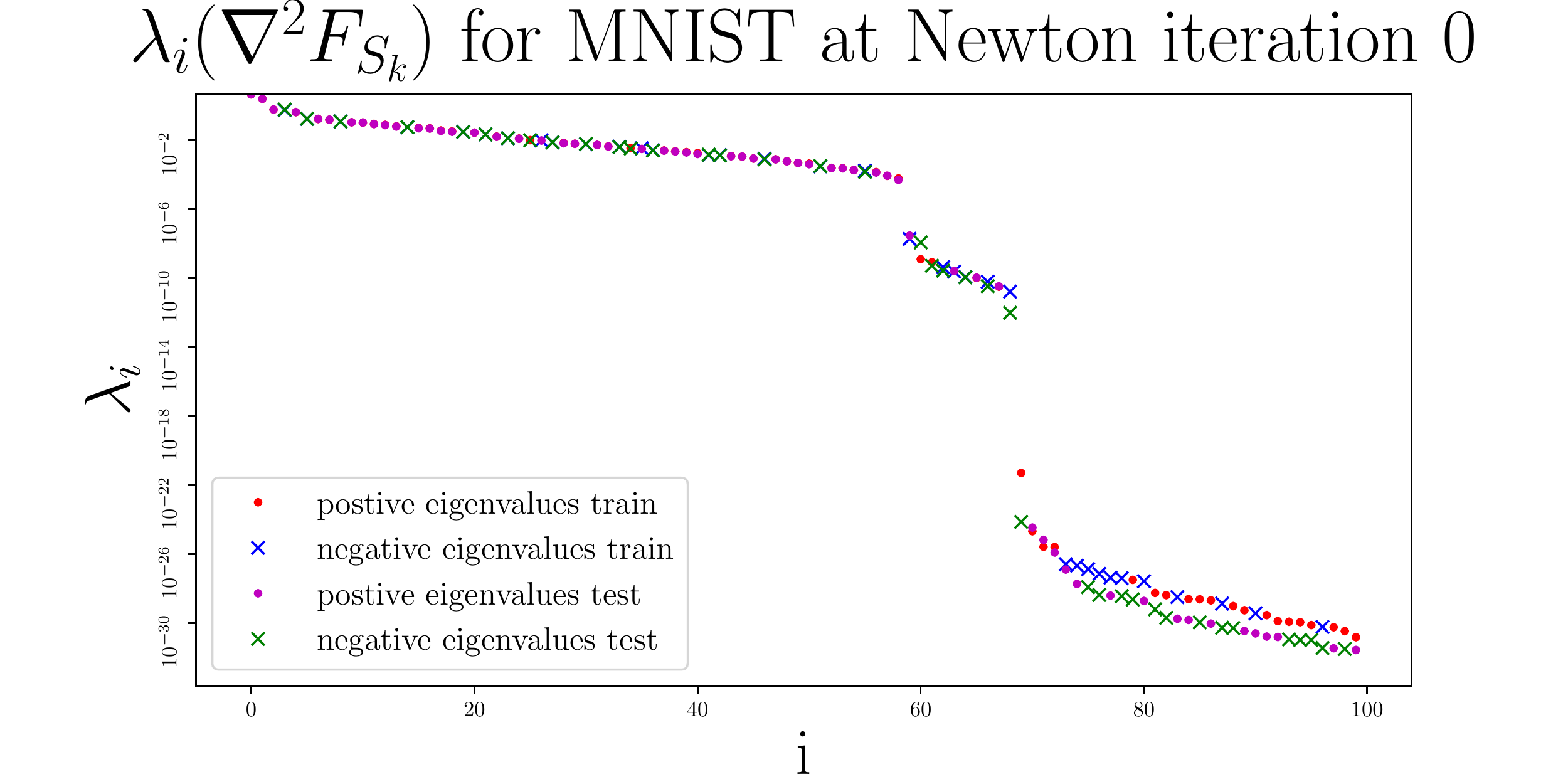}
\end{subfigure}%
\begin{subfigure}[a]{0.5\textwidth}
   \includegraphics[width=\textwidth]{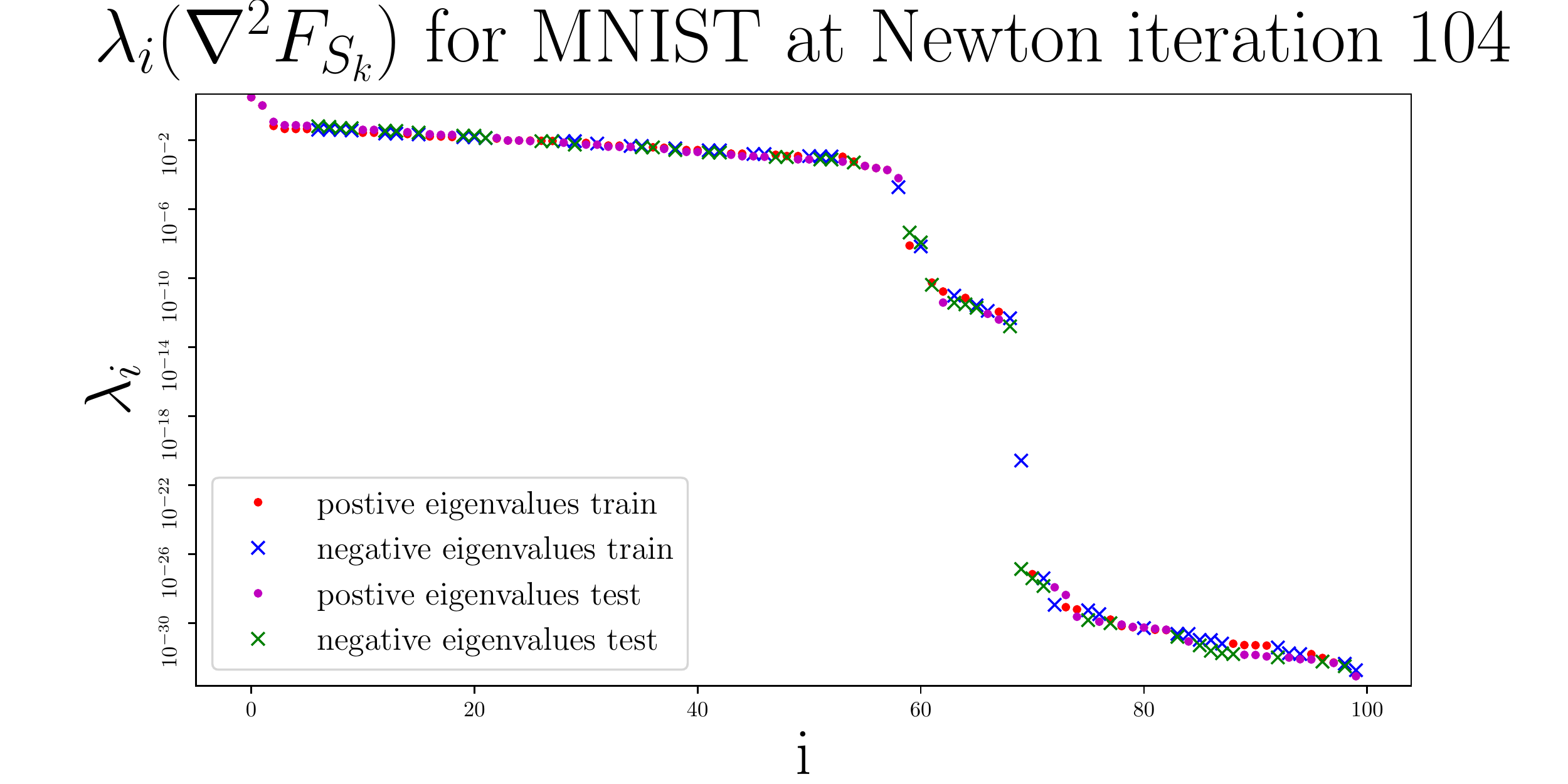}
\end{subfigure}
\caption[]{Hessian Spectra for MNIST along iterates generated by INCG}
\label{mnist_5000_eigs}
\end{figure}

In Table \ref{mnist_table}, we see that INGMRES performed the best in terms of training and testing error, followed by INCG, ADAM with $\alpha =0.01$, GD, INMINRES, LRSFN and SGD. INCG had the least variance in the testing and training error. The inexact Newton Krylov methods perfomed well overall, and LRSFN performed somewhat poorly with line search, although performed better in terms of average generalization error for the choice of $r=60$. Overall the inexact Newton Krylov methods performed well as the Hessian seemed to exhibit clustering and rapid decay throughout training.

\begin{table}[H]
\begin{center}
  \begin{tabular}{|l|c|c|c|c|}
    \hline
     \enskip &  mean$(\widehat{F}_k)$ train & std$(\widehat{F}_k)$ train & mean$(\widehat{F}_k)$ test &std$(\widehat{F}_k)$ test\\
    \hline 
    ADAM $\alpha = 0.01$ &  4.502e-02 & \textbf{6.732e-03}&  6.051e-02 & 1.294e-02 \\ \hline
    GD &  6.031e-02 & 1.333e-02&  6.108e-02 & 1.344e-02 \\ \hline
    INCG &  5.837e-02 & 1.176e-02&  5.856e-02 & \textbf{1.176e-02} \\ \hline
    INGMRES & 5.183e-02 & 1.474e-02&  \textbf{5.202e-02} & 1.477e-02 \\ \hline
    INMINRES & 7.162e-02 & 1.882e-02&  6.380e-02 & 1.635e-02 \\ \hline
    LRSFN $r=20$ &  7.057e-02 & 3.646e-02 & 7.044e-02  & 3.649e-02\\ \hline
    LRSFN $r=60$ &  6.244e-02 & 1.158e-02 & 6.227e-02  & 1.161e-02\\ \hline
    SGD $\alpha = 0.01$&  \textbf{4.453e-02} & 8.898e-03&  7.139e-02 & 1.592e-02 \\ \hline
    \hline
     \enskip &  min$(\widehat{F}_k)$ train & median$(\widehat{F}_k)$ train & min$(\widehat{F}_k)$ test &median$(\widehat{F}_k)$ test\\
    \hline 
    ADAM $\alpha = 0.01$&  2.918e-02 & 4.563e-02&  2.986e-02 & 6.024e-02 \\ \hline
    GD &  		3.048e-02 &  			6.081e-02& 		3.079e-02 & 		6.160e-02 \\ \hline
    INCG &  		3.325e-02 & 			6.074e-02&  	3.349e-02 & 		6.092e-02 \\ \hline
    INGMRES & \textbf{1.791e-02} & 4.970e-02& \textbf{1.799e-02} & \textbf{5.025e-02} \\ \hline
    INMINRES & 2.946e-02 & 7.420e-02&  2.991e-02 & 6.654e-02 \\ \hline
    LRSFN $r=20$ & 3.182e-02 & 	6.793e-02 &	3.189e-02  &  6.8434e-02\\ \hline
    LRSFN $r=60$ &  3.137e-02 & 1.161e-02 & 6.339e-02 & 6.289e-02  \\ \hline
    SGD $\alpha = 0.01$&  2.318e-02 & \textbf{4.649e-02}&  3.066e-02 & 7.207e-02 \\ \hline
  \end{tabular}
\end{center}
\caption[]{Summary for MNIST with line search over $50$ different initial guesses}
\label{mnist_table}
\end{table}

% \begin{figure}[!htb] 
% \centering
% \includegraphics[width=0.7\textwidth]{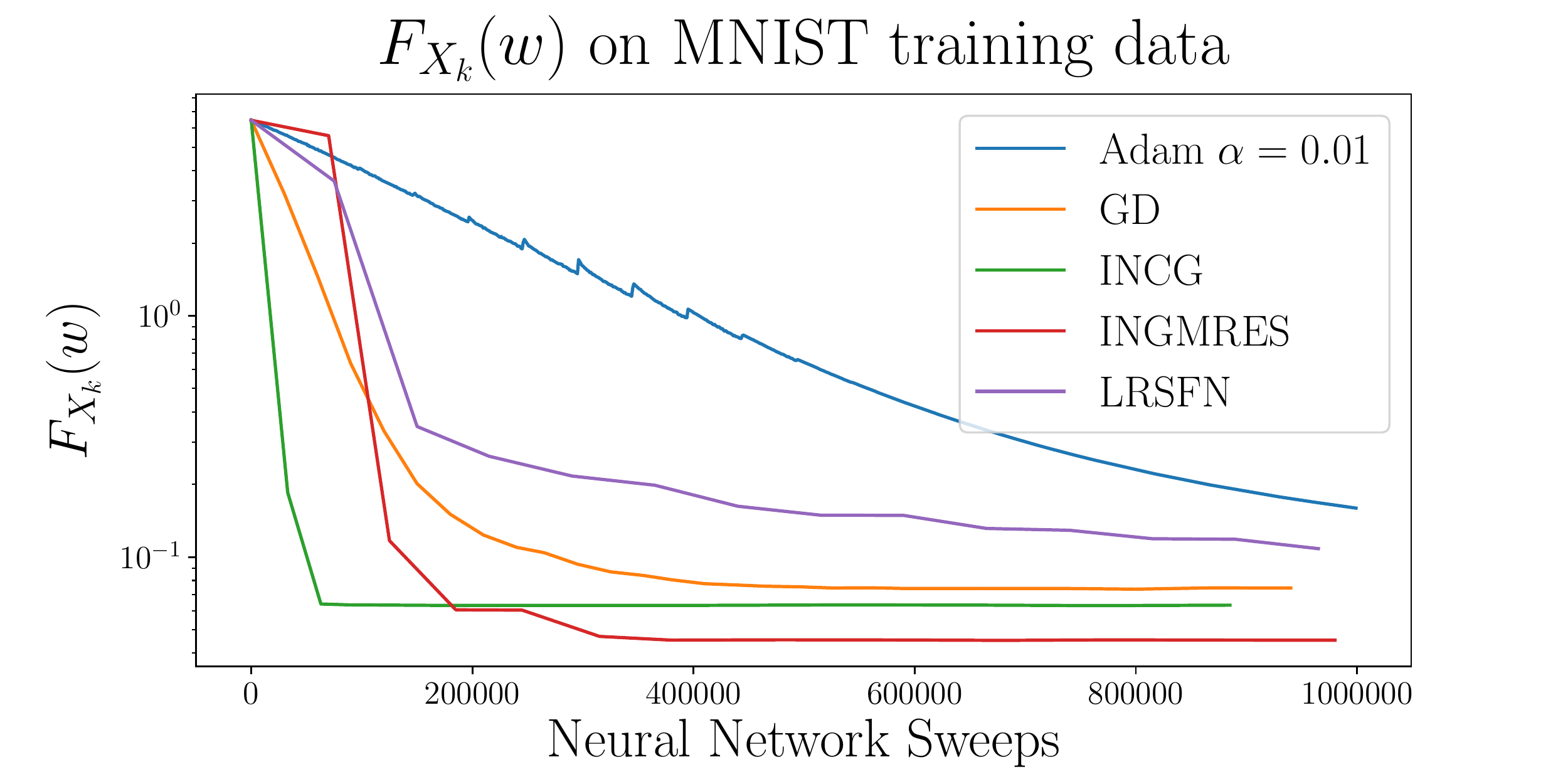}
% \caption[]{Training error for a single MNIST run}
% \label{mnist_10k_losses}
% \end{figure}

\begin{figure}[H] 
\centering
\begin{subfigure}[a]{0.5\textwidth}
   \includegraphics[width=\textwidth]{figures/MNIST10ktrain.pdf}
\end{subfigure}%
\begin{subfigure}[a]{0.5\textwidth}
   \includegraphics[width=\textwidth]{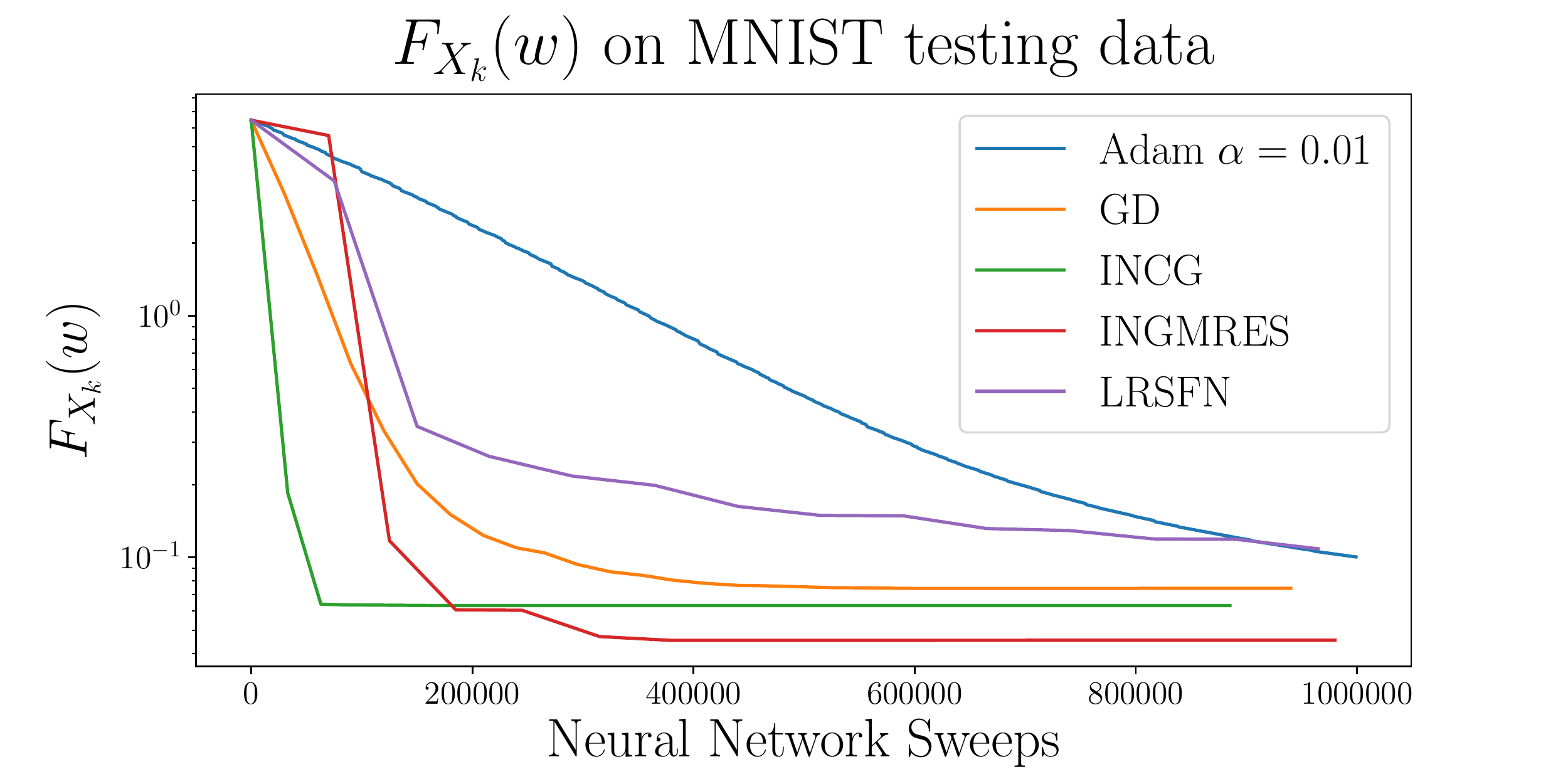}
\end{subfigure}
\caption[]{Training and testing error for a single MNIST run}
\label{mnist_10k_losses}
\end{figure}

In Figure \ref{mnist_10k_losses} we see that both INGMRES and INCG converge quickly to basins that they stay in. GD converges relatively quickly as well though to a basin worse than that of INCG and INGMRES. LRSFN converged slower for this problem, as did Adam. Both seemed to not have converged to a solution at the limit of the neural network sweeps. Note that the training and testing error plots are very similar for the line search methods, which is consistent with the results in Table \ref{mnist_table}.

\subsection{CIFAR10}

Since LRSFN did not perform well with line search for the MNIST problem (Table \ref{mnist_table}), we do not perform line search using LRSFN for CIFAR10. We begin again by studying the effect of the Hessian batch size $N_{S_k}$ on convergence. As Figure \ref{cifar_s_k_conv_incg} shows, the INCG runs all performed about the same in terms of Newton iterations. The methods with smaller batch sizes ended up overfitting, since the small batch Newton iterations are so cheap, they perform many more of them. The same pattern is evident in Figure \ref{cifar_s_k_conv_lrsfn} for the LRSFN iterates. Since the large Hessian batch size runs did not perform better than the small batch size runs this suggests there is redundancy in the Hessian data. Taking $N_{S_k}$ small is again justified, but terminating the methods before they begin to overfit is also warranted.

\begin{figure}[H] 
\centering
\begin{subfigure}[a]{0.5\textwidth}
   \includegraphics[width=\textwidth]{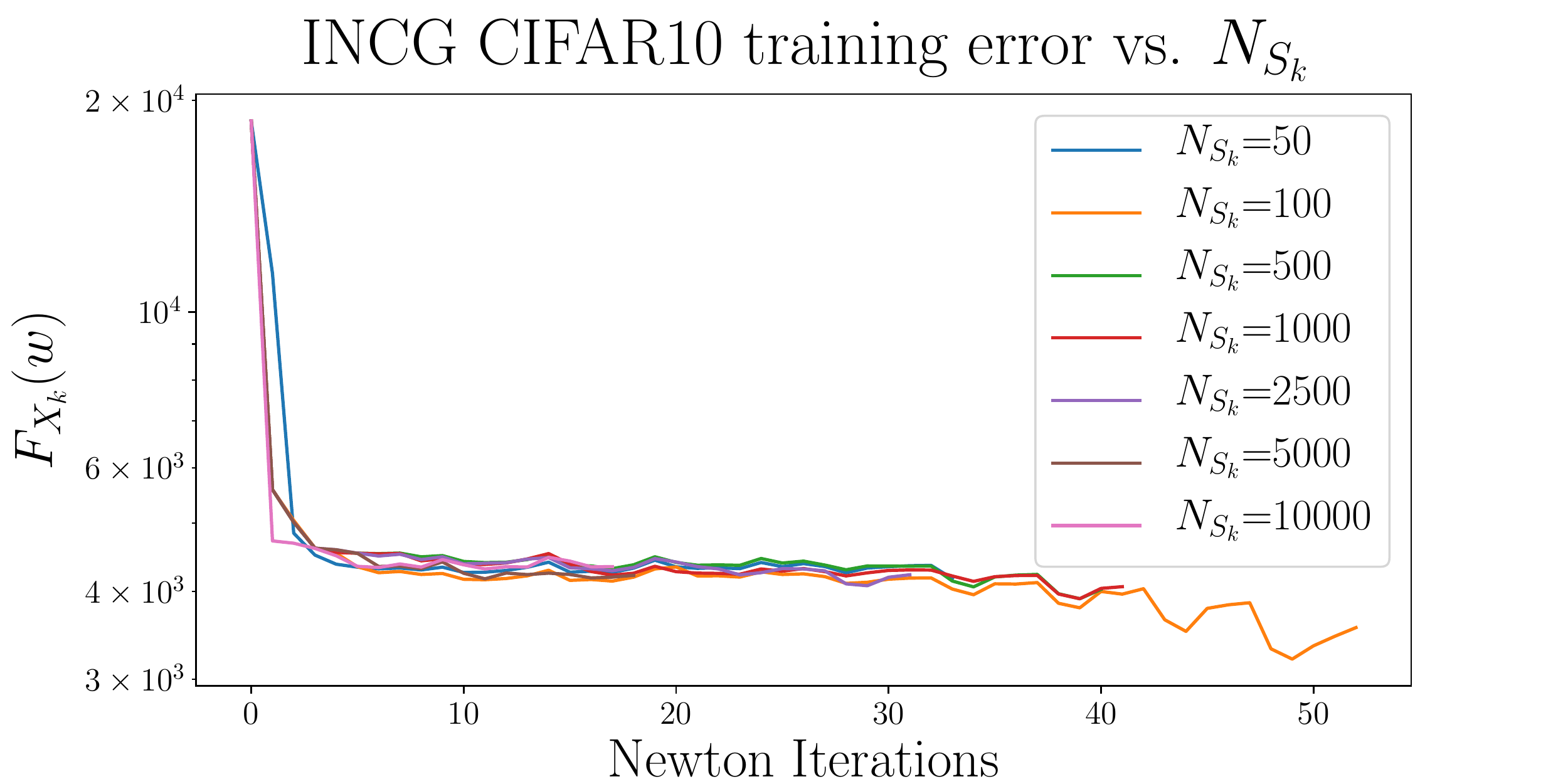}
\end{subfigure}%
\begin{subfigure}[a]{0.5\textwidth}
   \includegraphics[width=\textwidth]{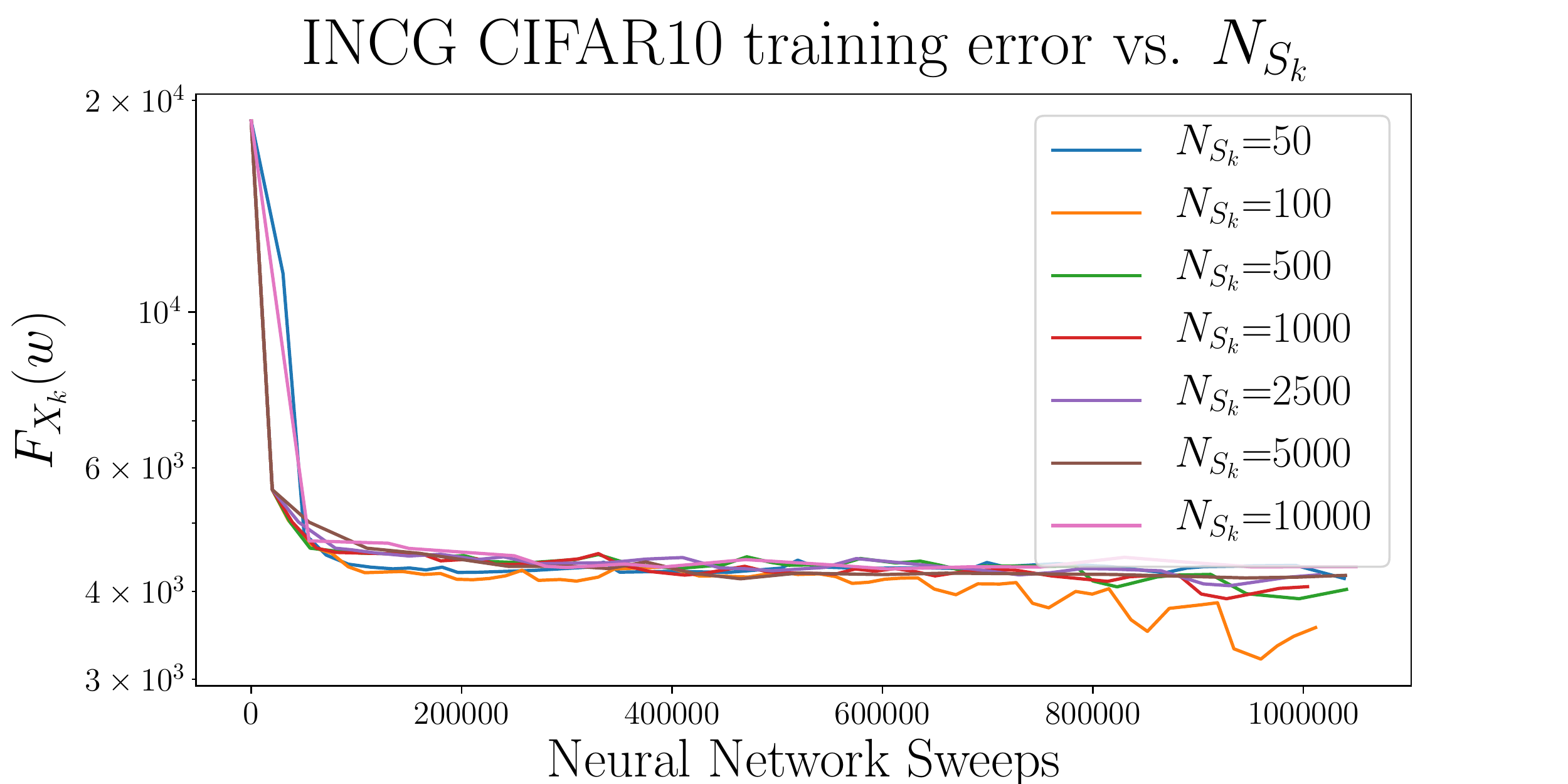}
\end{subfigure}
\begin{subfigure}[b]{0.5\textwidth}
   \includegraphics[width=\textwidth]{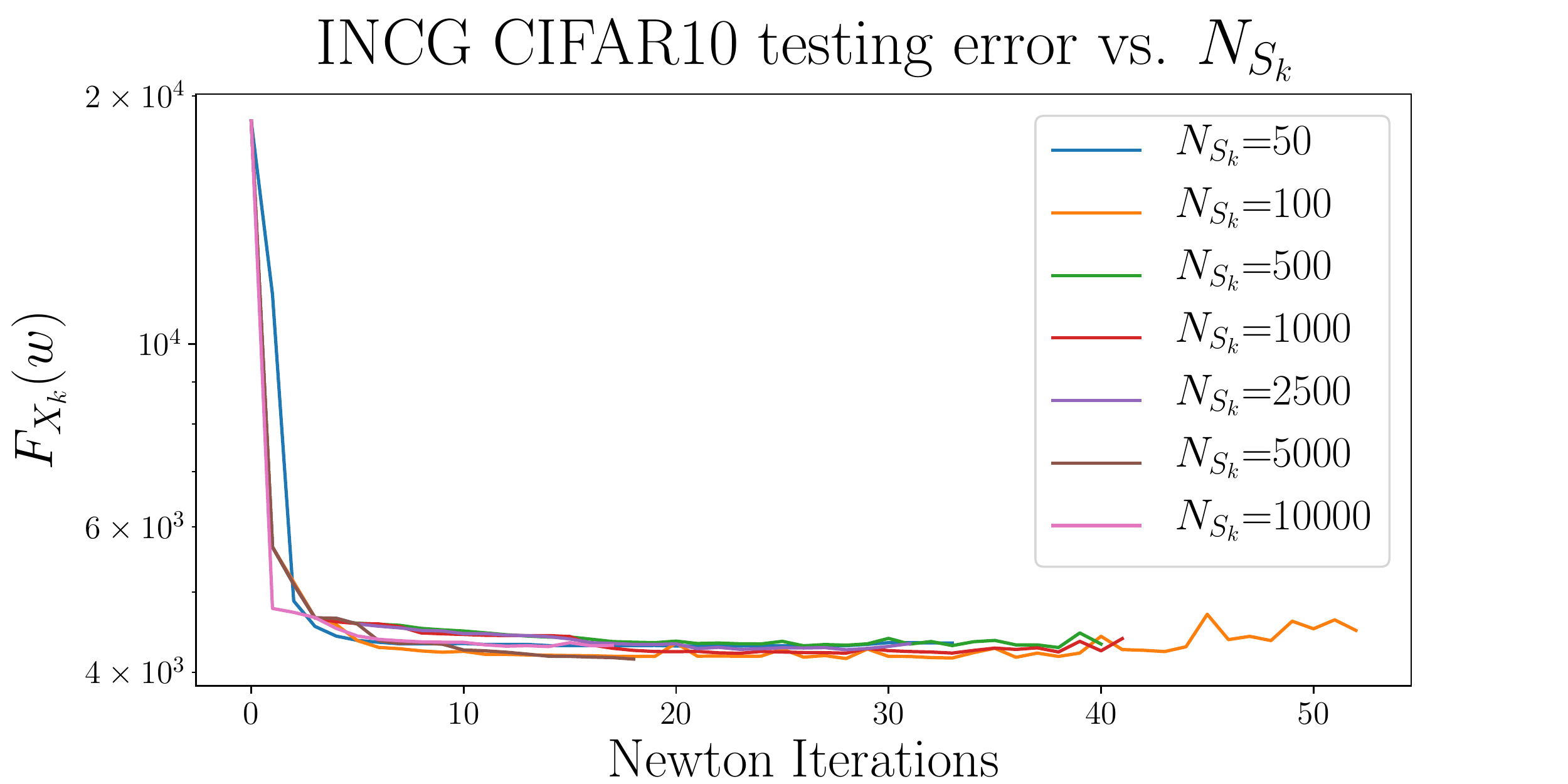}
\end{subfigure}%
\begin{subfigure}[b]{0.5\textwidth}
   \includegraphics[width=\textwidth]{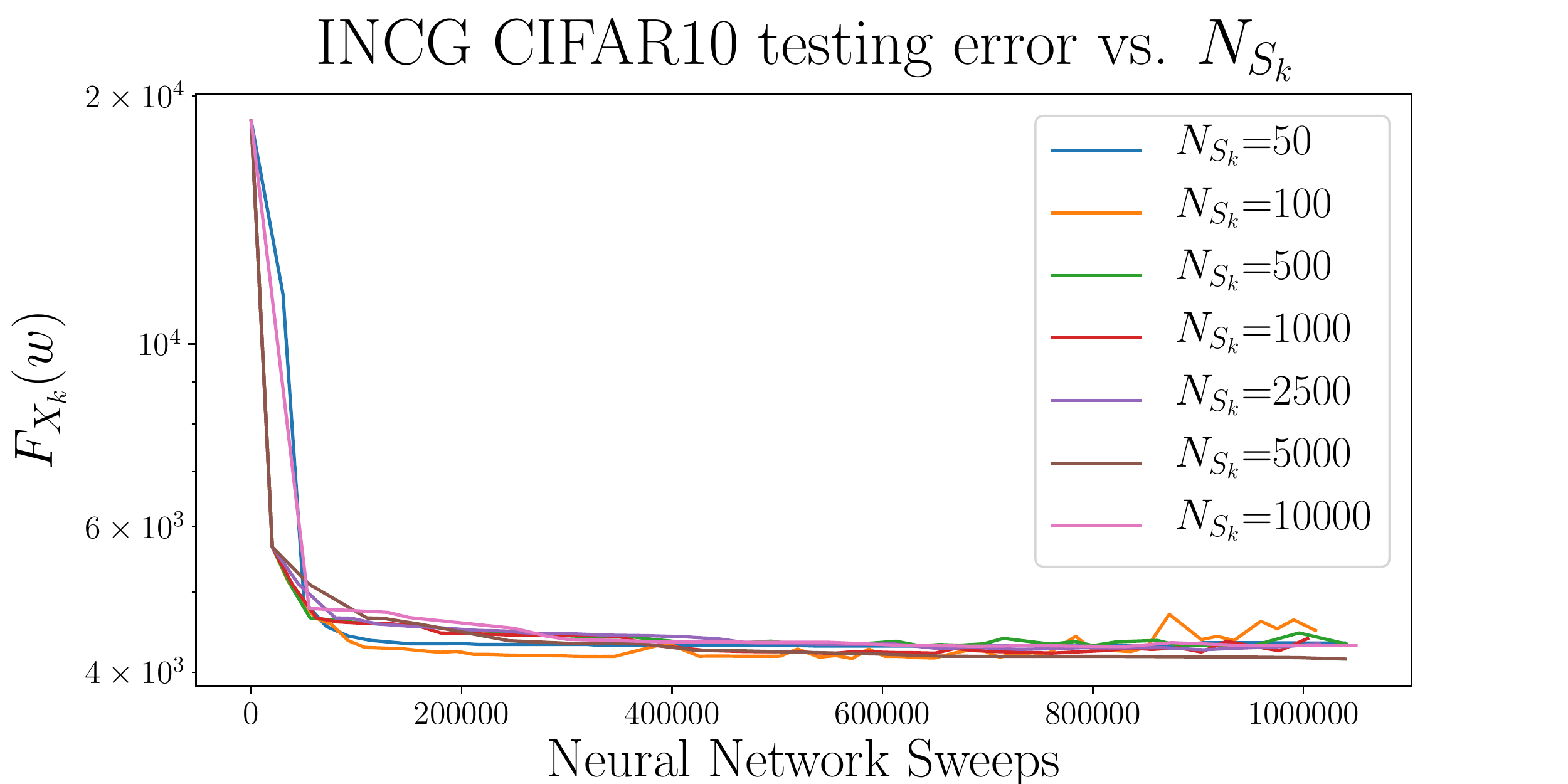}
\end{subfigure}

\caption[]{Top: CIFAR10 training error vs $N_{S_k}$; Bottom: CIFAR10 testing error vs $N_{S_k}$ }
\label{cifar_s_k_conv_incg}
\end{figure}

\begin{figure}[H] 
\centering
\begin{subfigure}[b]{0.5\textwidth}
   \includegraphics[width=\textwidth]{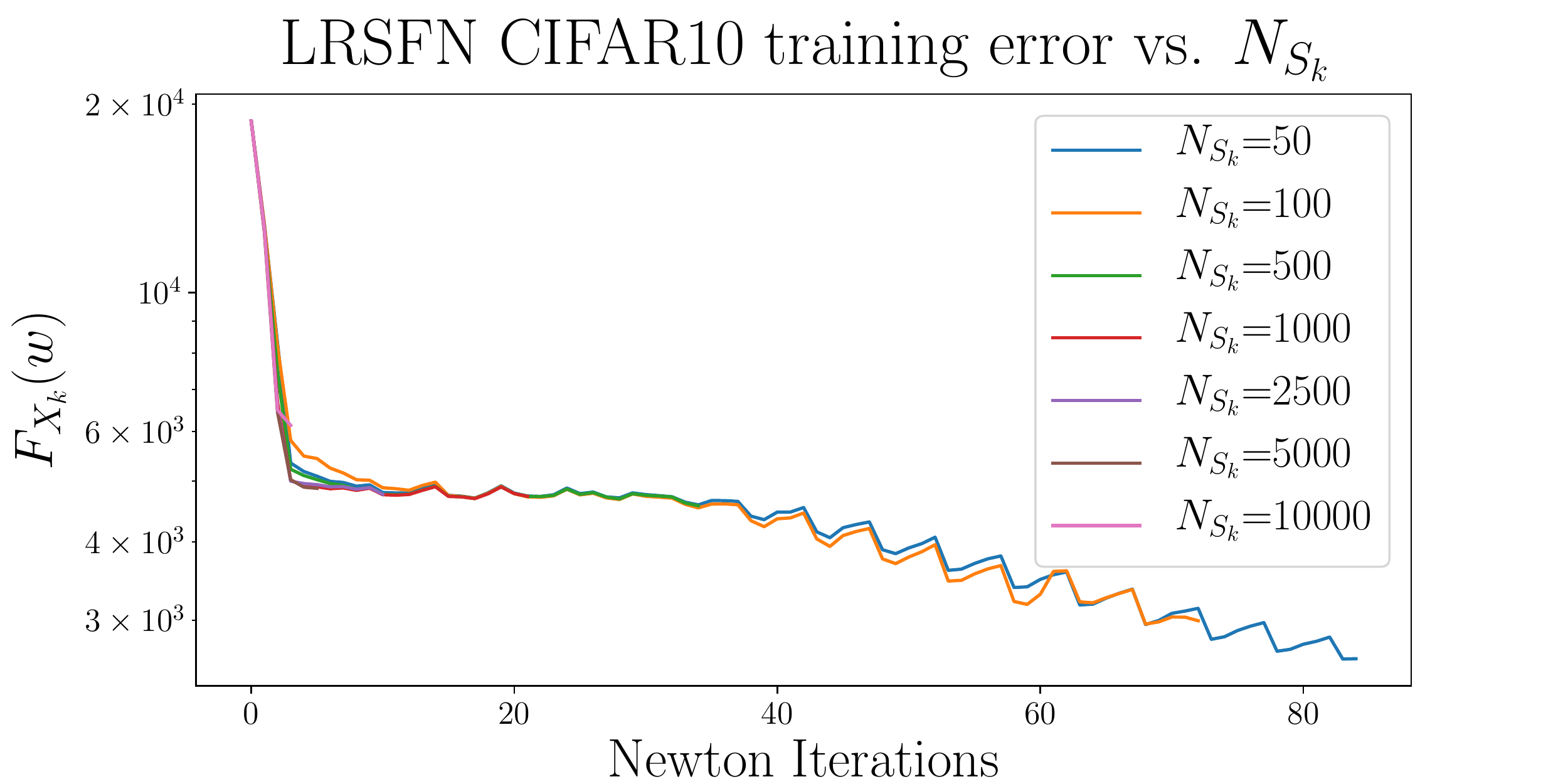}
\end{subfigure}%
\begin{subfigure}[b]{0.5\textwidth}
   \includegraphics[width=\textwidth]{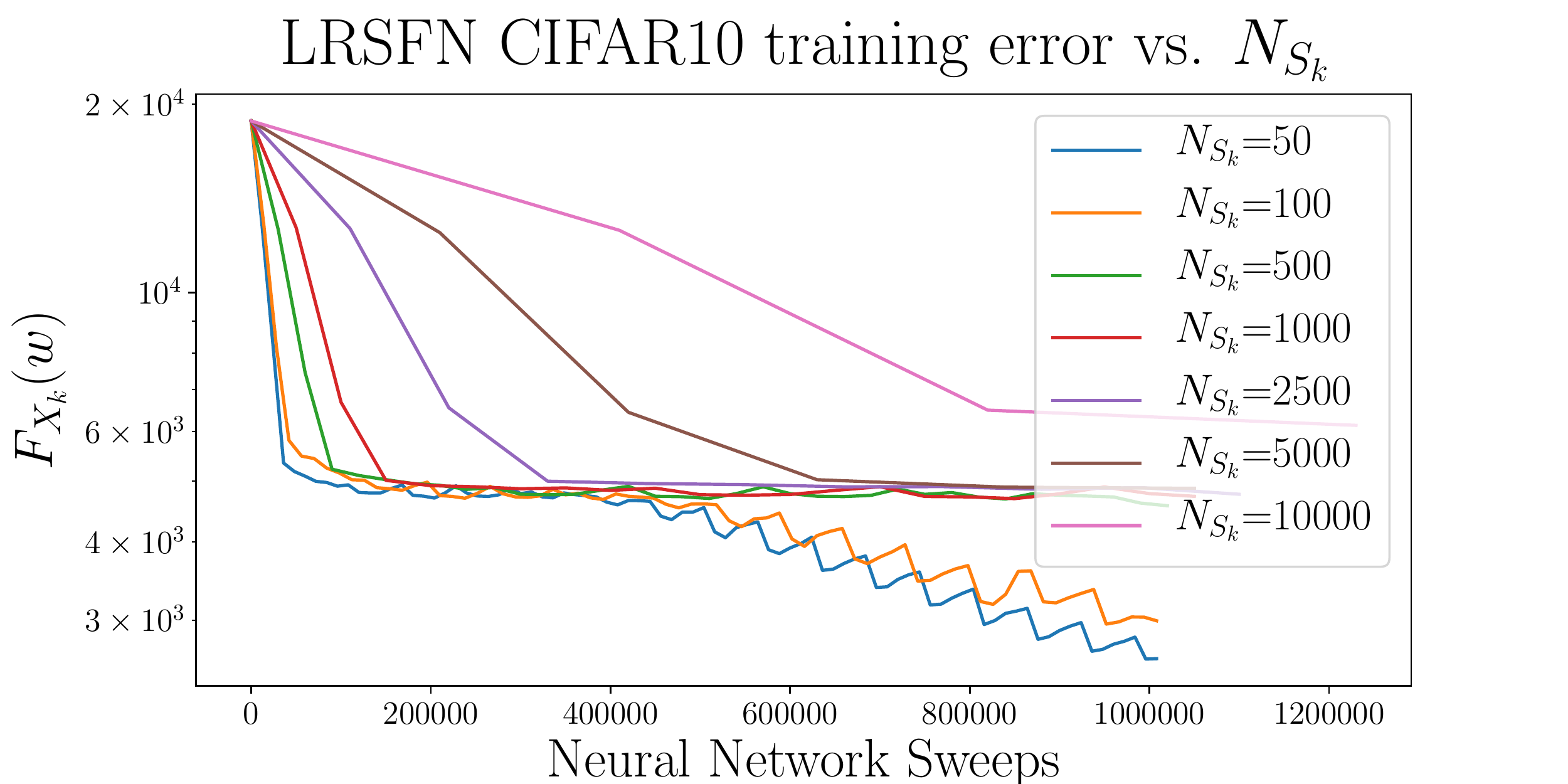}
\end{subfigure}
\begin{subfigure}[b]{0.5\textwidth}
   \includegraphics[width=\textwidth]{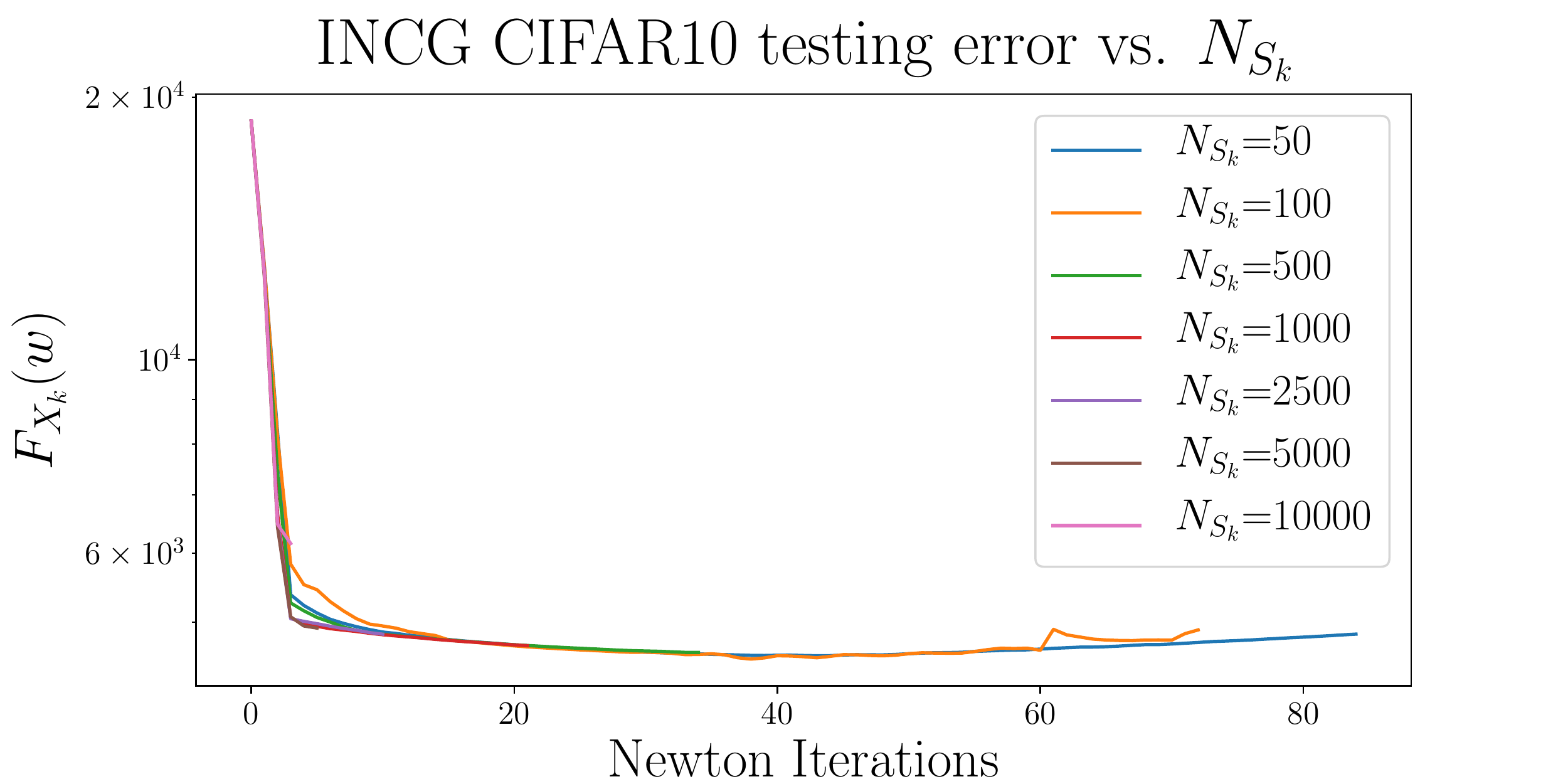}
\end{subfigure}%
\begin{subfigure}[b]{0.5\textwidth}
   \includegraphics[width=\textwidth]{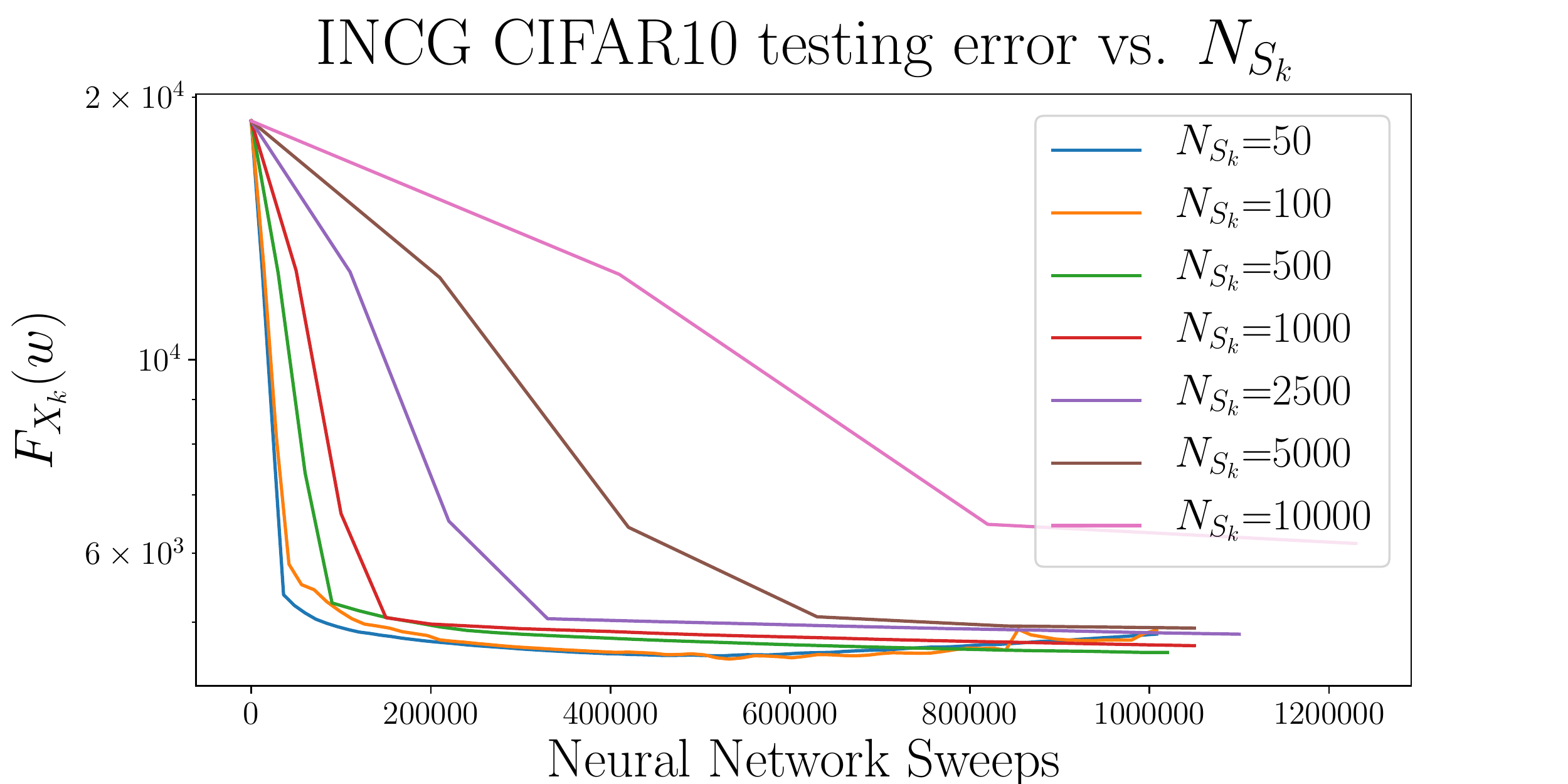}
\end{subfigure}

\caption[]{Top: CIFAR10 training error vs $N_{S_k}$; Bottom: CIFAR10 testing error vs $N_{S_k}$}
\label{cifar_s_k_conv_lrsfn}
\end{figure}

Figure \ref{cifar_10k_eigs} shows that for a random initial guess the spectrum was slow to decay and did not show any clustering properties in the top 100 eigenvalues. However shortly after taking a few steps the spectrum decayed much faster and started exhibiting clusters. Note also that these plots suggest that while the most negative eigenvalue may be large for random initial guesses and early iterations, eventually the largest (in magnitude) negative eigenvalue may become small asymptotically (as the iterates near an optimum), which justifies the assumption that $-\epsilon_H I \preceq \nabla^2 F_{S_k}$ for $\epsilon_H$ small used in the derivation of local convergence rates. At iteration $20$ the largest negative eigenvalue for the training data Hessian is approximately $10^4$, whereas at iteration $30$ the largest negative eigenvalue for the training data Hessian is approximately $10^{-2}$ There is more variance between the testing and training spectra in this problem than in the MNIST problem.

\begin{figure}[H] 
\centering
\begin{subfigure}[a]{0.5\textwidth}
   \includegraphics[width=\textwidth]{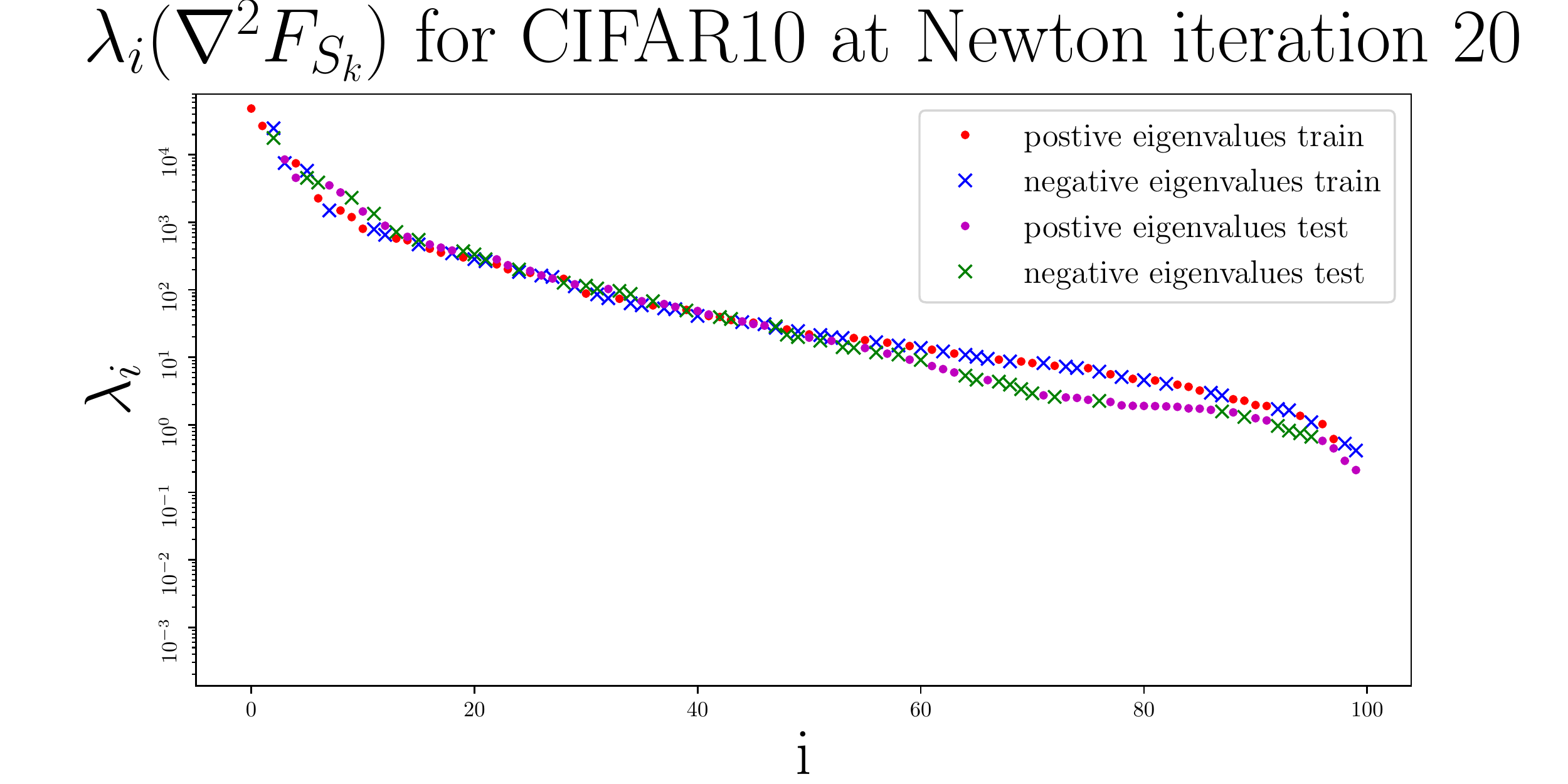}
\end{subfigure}\hfill%
\begin{subfigure}[a]{0.5\textwidth}
   \includegraphics[width=\textwidth]{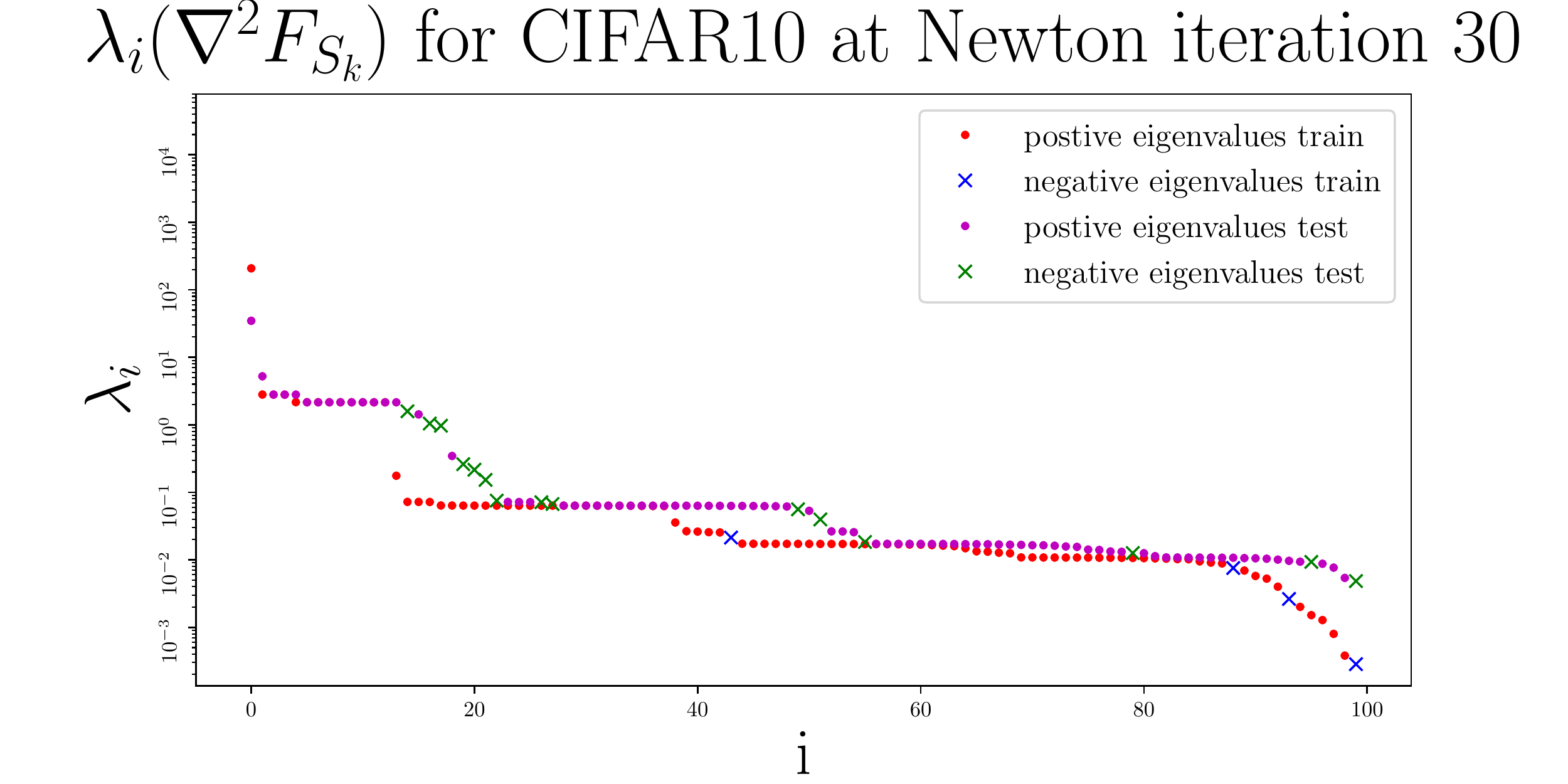}
\end{subfigure}
\caption[]{Hessian Spectra for CIFAR10 along iterates generated by INCG}
\label{cifar_10k_eigs}
\end{figure}

Since we take fixed step lengths for LRSFN for the CIFAR10 problem, we split the numerical tests into two categories: line search methods and fixed step methods. The inexact Newton GMRES and MINRES methods did not perform well for random initial guesses. We believe this is due to the high rank structure of the Hessian spectrum for random initial guesses seen in Figure \ref{cifar_10k_eigs}. In some problems such as shape optimization it can be shown that the Gauss-Newton Hessian is a compact operator \cite{BuiGhattas2012}. In such settings where high rank Hessians are observed and lead to poor Newton performance, the Gauss-Newton Hessian can be substituted for a few iterations until the full Hessian exhibits low rank structure again. In such cases the second order terms not in the Gauss-Newton Hessian dominate. With this in mind, for INMINRES and INGMRES we perform two iterations of gradient descent (instead of the Gauss-Newton Hessian). After these iterations the spectrum exhibits low rank clustering structure and the MINRES and GMRES methods perform very well. We denote this with $\dagger$.Table \ref{cifar_table_line_search} shows that the inexact Newton-Krylov methods all outperformed gradient descent with line search. The fully stochastic INCG algorithm performed the best in generalization error.

\begin{table}[H]
\begin{center}
  \begin{tabular}{|l|c|c|c|c|}
    \hline
      \multicolumn{5}{|c|}{ $N_{X_k} = 10000,N_{S_k} = 1000, d = 12315$, $|x| = 3(32)^2$   }  \\\hline \hline
     \enskip &  mean$(\widehat{F}_k)$ train & std$(\widehat{F}_k)$ train & mean$(\widehat{F}_k)$ test &std$(\widehat{F}_k)$ test\\
    \hline 
    GD &  4.345e+03 & 109.2 &  4.397e+03 & 106.0 \\ \hline
    INCG &  4.230e+03 & \textbf{62.34} &  4.285e+03 & \textbf{59.77} \\ \hline
    INCG SA &  \textbf{3.872e+03} & 204.9 &  \textbf{4.144e+03} & 109.3  \\ \hline
    $\text{INGMRES}^\dagger$ & 4.305e+03 & 94.77&  4.338e+03 & 93.28 \\ \hline
    $\text{INMINRES}^\dagger$ & 4.272e+03 & 151.9&  4.350e+03 & 63.96 \\ \hline
  \end{tabular}
\end{center}
\caption[]{\textbf{(Line search)} Summary for CIFAR10 over 50 different initial guesses}
\label{cifar_table_line_search}
\end{table}

As Table \ref{cifar_table_fixed_step} shows, the fully stochastic version of LRSFN performed the best, and all of the methods performed better for $\alpha = 0.05$ than $\alpha = 0.01$. The methods were numerically unstable for $\alpha = 0.1$, and those results were omitted. SA LRSFN with fixed step and SA INCG with line search performed the best for the CIFAR10 problem overall.

\begin{table}[H]
\begin{center}
  \begin{tabular}{|l|c|c|c|c|}
    \hline
      \multicolumn{5}{|c|}{ $N_{X_k} = 10000,N_{S_k} = 500, d = 12315$, $|x| = 3(32)^2$   }  \\\hline \hline
     \enskip &  mean$(\widehat{F}_k)$ train & std$(\widehat{F}_k)$ train & mean$(\widehat{F}_k)$ test &std$(\widehat{F}_k)$ test\\
    \hline 
    Adam $\alpha = 0.01$ &  1.350e+03 & 176.6 &  5.204e+03 & 24.12 \\ \hline
    Adam $\alpha = 0.05$ &  9.593e+02 & 79.62 &  4.804e+03 & 139.0 \\ \hline
    LRSFN $\alpha = 0.01$ &  3.290e+03 & 34.97 &  4.539e+03 & 49.86 \\ \hline
    LRSFN $\alpha = 0.05$ &  3.128e+03 & 116.5 &  4.307e+03 & \textbf{22.36} \\ \hline
    LRSFN SA $\alpha = 0.01$ &  4.126e+03 & 108.1 &  \textbf{4.264e+03} & 135.3 \\ \hline
    LRSFN SA $\alpha = 0.05$ &  4.113e+03 & \textbf{23.24} &  4.268e+03 & 135.3 \\ \hline
    SGD $\alpha = 0.01$ & \textbf{8.041e+02} & 161.2 & 4.544e+03 & 116.8 \\ \hline
    SGD $\alpha = 0.05$& 8.140e+02 & 117.2 & 4.333e+03 & 59.9 \\ \hline
  \end{tabular}
\end{center}
\caption[]{(\textbf{Fixed step length}) Summary for CIFAR10 over 50 different initial guesses ($\alpha_k = 0.01$)}
\label{cifar_table_fixed_step}
\end{table}

\begin{figure}[H] 
\centering
\begin{subfigure}[a]{0.5\textwidth}
   \includegraphics[width=\textwidth]{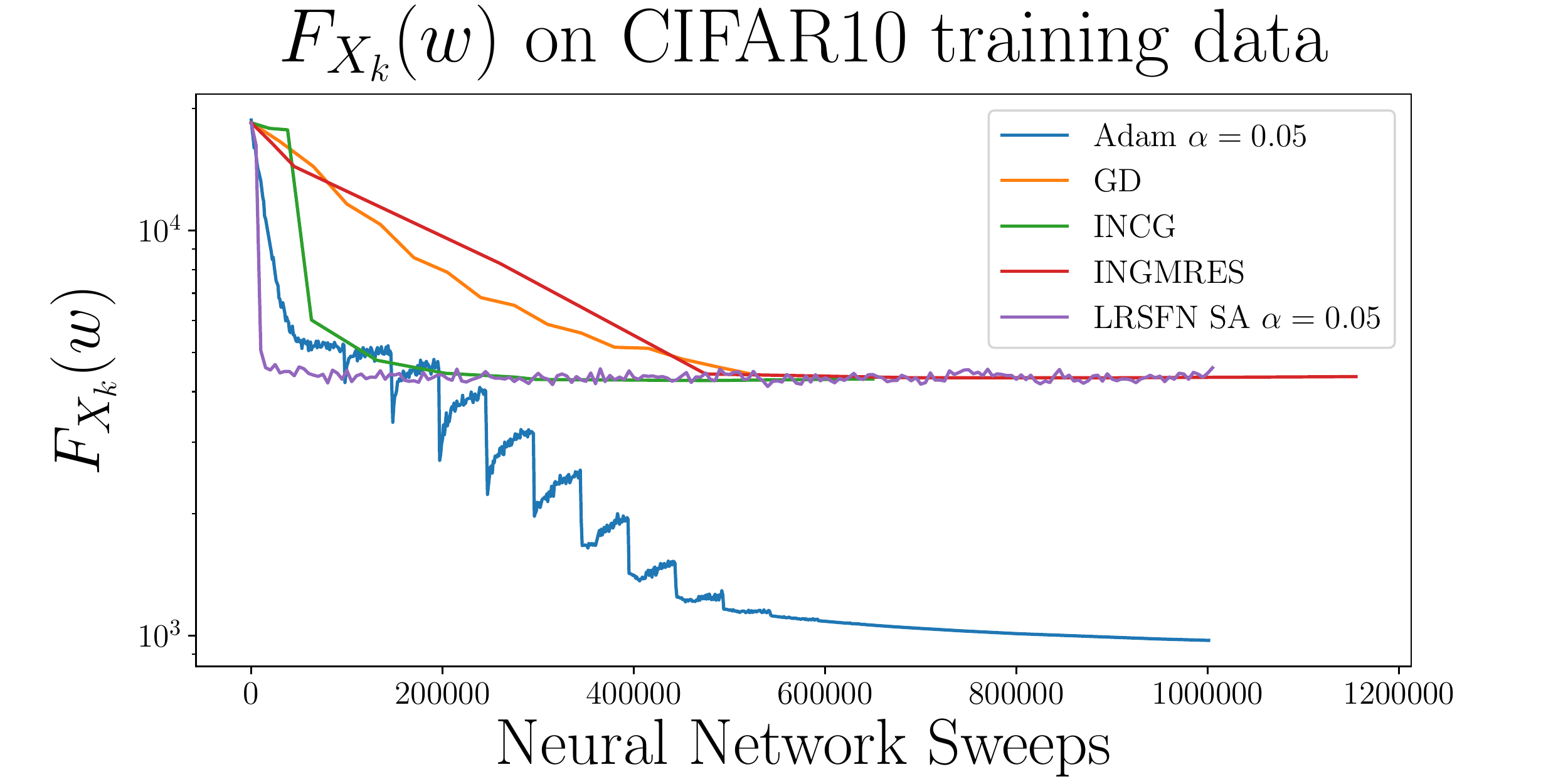}
\end{subfigure}%
\begin{subfigure}[a]{0.5\textwidth}
   \includegraphics[width=\textwidth]{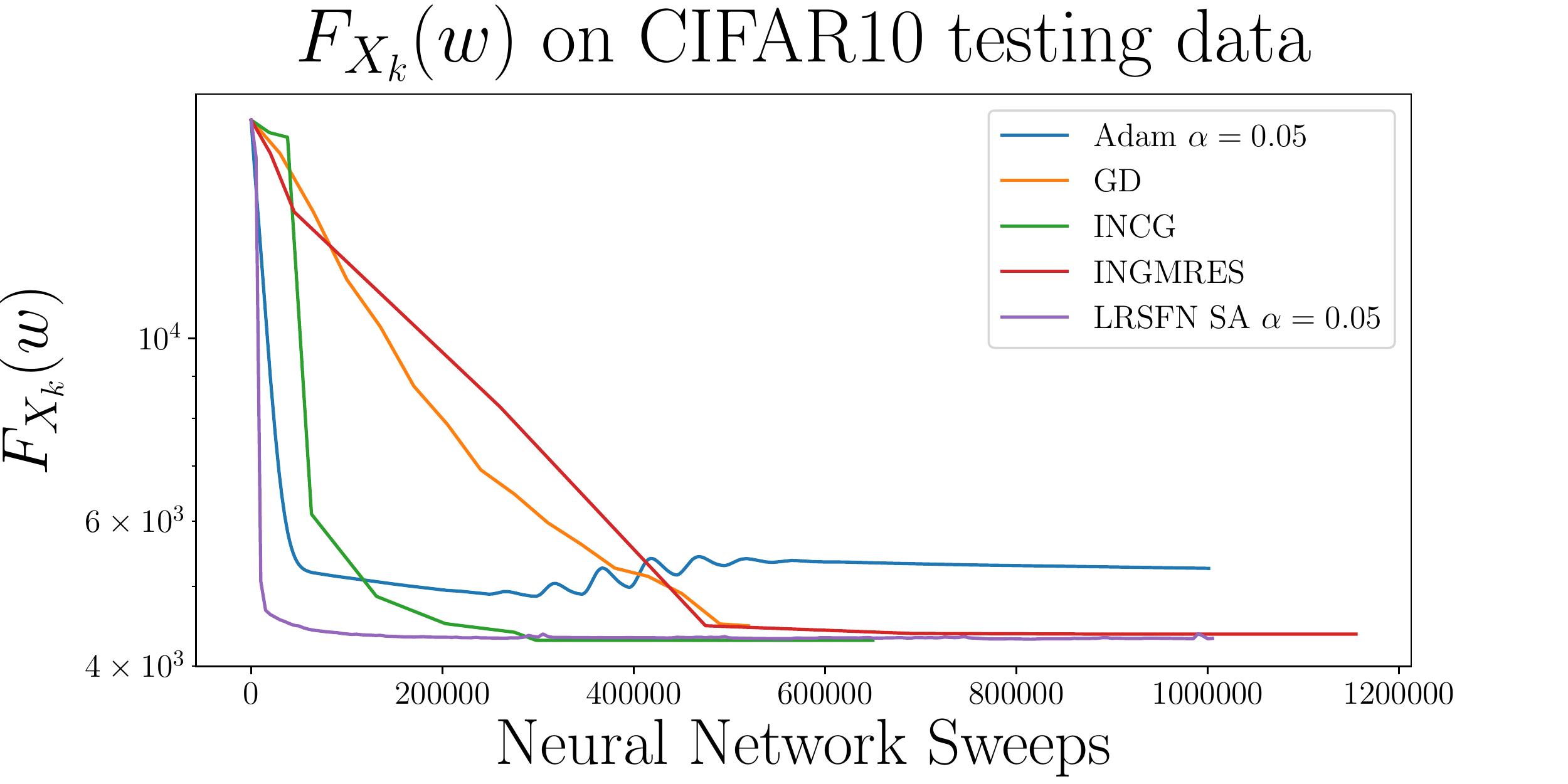}
\end{subfigure}
\caption[]{Training and testing error for a single CIFAR10 run}
\label{cifar10_10k_losses}
\end{figure}

In Figure \ref{cifar10_10k_losses} we see that both of the SA algorithms (Adam and LRSFN) performed the best in terms of training error. Adam however overfits drastically. INCG, INGMRES and GD all converge to basins comparable to that of LRSFN, INCG converges relatively fast, while GD and INGMRES are slower.

\subsection{Software}

All of the code used for this paper can be found at \url{https://github.com/tomoleary/hessianlearn}, a Python library for second order stochastic optimization in TensorFlow \cite{AbadiBarhamChenEtAl2016}. All results were generated on servers with 64 Intel Xeon Skylake cores, and 1 TB of RAM.

\section{Conclusion}

In this paper we have studied stochastic inexact Newton methods and their application to nonconvex optimization problems. We presented analysis on stochastic low rank Newton methods, and stochastic inexact Newton Krylov methods. We derived convergence rates that characterize the errors incurred in subsampling the gradient and Hessian, in approximately solving the Newton linear system, and in choosing step length and regularization hyperparameters.

We performed numerical experiments on MNIST and CIFAR10 convolutional autoencoder training problems. Specifically we studied both fully-stochastic and semi-stochastic variants of these Newton methods in comparison with standard methods such as Adam, gradient descent, and stochastic gradient descent. We studied the effect of Hessian batch size on training and generalization. We saw that if too little Hessian information was used the methods may be susceptible performing too many cheap iterations and overfitting, while too much Hessian information made the problem harder to solve.

In these experiments we found that inexact Newton Krylov methods performed the best. LRSFN did not perform well when line search was enforced. However for fixed step lengths fully stochastic and semi-stochastic variants of LRSFN performed better. In every case at least one inexact Newton method outperformed the best first order method in generalization error. Theory suggests that the Krylov methods would better approximate the Newton linear system for a given number of Hessian vector products, when clustering is prevalent in the Hessian spectrum. In the case of neural network training, we observed clustering of the Hessian spectrum in our numerical experiments, in agreement with other numerical experiments \cite{AlainRouxManzagol2019,GhorbanKrishnanXiaoi2019,SagunBottouLeCun2016}. We believe that the Krylov methods performed the best due to the clustering properties of the Hessian spectra. 

The inexact Newton Krylov methods break down when the Hessian is high rank or does not have clustered eigenvalues. In the highly data informed regime, where the Hessian likely has high rank, preconditioning may be critical to the success of these methods. LRSFN does not perform well when the Hessian spectrum decays very slowly, and cannot take advantage of the clustering properties that the Krylov methods do. A saddle free Newton algorithm that explicitly uses a Krylov linear solver (not just Lanczos vectors) may perform better in this case.

%%%%%%%%%%%%%%%%%%%%%%%%%%%%%%%%%%%%%%%%%%%%%%%%%%%%%%%%%%%%%%%%%%%%%%%%%%%%%%%%%%%%%%%%%%%%%%%%%%%%%%%%%%%%%%%
%%%%%%%%%%%%%%%%%%%%%%%%%%%%%%%%%%%%%%%%%%%%%%%%%%%%%%%%%%%%%%%%%%%%%%%%%%%%%%%%%%%%%%%%%%%%%%%%%%%%%%%%%%%%%%%%%%%%%%%%%%%%%%%%%%%%%%%%%%%%%%%%%%%%%%%%%%%%%%%%%%%%%%%%%%%%%%%%%%%%%%%%%%%%%%%%%%%%%%%%%%%%%%%%%%%%%%%%%%%%%%%%%%%%%%%%%%%%%%%%%%%%%%%%%%%%%%%%%%%%%%%%%%%%%%%%%%%%%%%%%%%%%%%%%%%%%%%%%%%%%%%%%%%%%%%%%%%%%%%%%%%%%%%%%%%%%

\newpage

\bibliographystyle{siam}
\bibliography{local}

\begin{thebibliography}{10}

\bibitem{AbadiBarhamChenEtAl2016}
{\sc M.~Abadi, P.~Barham, J.~Chen, Z.~Chen, A.~Davis, J.~Dean, M.~Devin,
  S.~Ghemawat, G.~Irving, M.~Isard, et~al.}, {\em {T}ensor{F}low: {A} system
  for large-scale machine learning}, in Proceedings of the 12th USENIX
  Symposium on Operating Systems Design and Implementation (OSDI). Savannah,
  Georgia, USA, 2016.

\bibitem{AlainRouxManzagol2019}
{\sc G.~Alain, N.~L. Roux, and P.-A. Manzagol}, {\em {N}egative eigenvalues of
  the {H}essian in deep neural networks}, arXiv preprint arXiv:1902.02366,
  (2019).

\bibitem{Alger2019}
{\sc N.~Alger}, {\em {D}ata {S}calable {H}essian {P}reconditioning for
  {L}arge-{S}cale {PDE}-constrained {I}nverse {P}roblems}, 2019.

\bibitem{Bertsekas1997}
{\sc D.~P. Bertsekas}, {\em {N}onlinear {P}rogramming}, Journal of the
  Operational Research Society, 48 (1997), pp.~334--334.

\bibitem{BollapragadaByrdNocedal2018}
{\sc R.~Bollapragada, R.~H. Byrd, and J.~Nocedal}, {\em {E}xact and inexact
  subsampled {N}ewton methods for optimization}, IMA Journal of Numerical
  Analysis, 39 (2018), pp.~545--578.

\bibitem{BuiGhattas2012}
{\sc T.~Bui-Thanh and O.~Ghattas}, {\em {A}nalysis of the {H}essian for inverse
  scattering problems: {I}. {I}nverse shape scattering of acoustic waves},
  Inverse Problems, 28 (2012), p.~055001.

\bibitem{ChoromanskaHenaffMathieu2015}
{\sc A.~Choromanska, M.~Henaff, M.~Mathieu, G.~B. Arous, and Y.~LeCun}, {\em
  {T}he loss surfaces of multilayer networks}, in Artificial Intelligence and
  Statistics, 2015, pp.~192--204.

\bibitem{DauphinPescanuGulcehre2014}
{\sc Y.~N. Dauphin, R.~Pascanu, C.~Gulcehre, K.~Cho, S.~Ganguli, and
  Y.~Bengio}, {\em {I}dentifying and attacking the saddle point problem in
  high-dimensional non-convex optimization}, in Advances in neural information
  processing systems, 2014, pp.~2933--2941.

\bibitem{DemboEisenstatSteihaug1982}
{\sc R.~S. Dembo, S.~C. Eisenstat, and T.~Steihaug}, {\em {I}nexact newton
  methods}, SIAM Journal on Numerical analysis, 19 (1982), pp.~400--408.

\bibitem{EisenstatWalker1996}
{\sc S.~C. Eisenstat and H.~F. Walker}, {\em {C}hoosing the forcing terms in an
  inexact {N}ewton method}, SIAM Journal on Scientific Computing, 17 (1996),
  pp.~16--32.

\bibitem{GhorbanKrishnanXiaoi2019}
{\sc B.~Ghorbani, S.~Krishnan, and Y.~Xiao}, {\em {A}n {I}nvestigation into
  {N}eural {N}et {O}ptimization via {H}essian {E}igenvalue {D}ensity}, arXiv
  preprint arXiv:1901.10159,  (2019).

\bibitem{GillMurrayWright1981}
{\sc P.~E. Gill, W.~Murray, and M.~H. Wright}, {\em {P}ractical optimization},
  Academic press, 1981.

\bibitem{GoodfellowBengioCourville2016}
{\sc I.~Goodfellow, Y.~Bengio, and A.~Courville}, {\em {D}eep {L}earning}, MIT
  Press, 2016.
\newblock \url{http://www.deeplearningbook.org}.

\bibitem{HalkoMartinssonTropp2011}
{\sc N.~Halko, P.-G. Martinsson, and J.~A. Tropp}, {\em {F}inding structure
  with randomness: {P}robabilistic algorithms for constructing approximate
  matrix decompositions}, SIAM review, 53 (2011), pp.~217--288.

\bibitem{JainJinKakadeEtAl2015}
{\sc P.~Jain, C.~Jin, S.~M. Kakade, and P.~Netrapalli}, {\em {C}omputing matrix
  squareroot via non convex local search. arxiv preprint}, arXiv preprint
  arXiv:1507.05854,  (2015).

\bibitem{JinChiGeEtAl2017}
{\sc C.~Jin, R.~Ge, P.~Netrapalli, S.~M. Kakade, and M.~I. Jordan}, {\em How to
  escape saddle points efficiently}, arXiv preprint arXiv:1703.00887,  (2017).

\bibitem{KingmaBa2014}
{\sc D.~P. Kingma and J.~Ba}, {\em {A}dam: {A} method for stochastic
  optimization}, arXiv preprint arXiv:1412.6980,  (2014).

\bibitem{KrizhevskyNairHinton2010}
{\sc A.~Krizhevsky, V.~Nair, and G.~Hinton}, {\em {CIFAR}-10 ({C}anadian
  {I}nstitute for {A}dvanced {R}esearch)},  (2010).

\bibitem{LeCunCortes2010}
{\sc Y.~LeCun and C.~Cortes}, {\em {MNIST} handwritten digit database},
  (2010).

\bibitem{LiChenCarlsonetal2016}
{\sc C.~Li, C.~Chen, D.~Carlson, and L.~Carin}, {\em {P}reconditioned
  stochastic gradient langevin dynamics for deep neural networks}, in Thirtieth
  AAAI Conference on Artificial Intelligence, 2016.

\bibitem{Luenberger1984}
{\sc D.~G. Luenberger, Y.~Ye, et~al.}, {\em Linear and nonlinear programming},
  vol.~2, Springer, 1984.

\bibitem{Martens2010}
{\sc J.~Martens}, {\em {{D}eep learning via {H}essian-free optimization}}, in
  Proceedings of the 27th International Conference on Machine Learning
  (ICML-10), 2010, pp.~735--742.

\bibitem{MartensGrosse2015}
{\sc J.~Martens and R.~Grosse}, {\em {O}ptimizing neural networks with
  kronecker-factored approximate curvature}, in International conference on
  machine learning, 2015, pp.~2408--2417.

\bibitem{MartensSutskever2012}
{\sc J.~Martens and I.~Sutskever}, {\em {{T}raining {D}eep and {R}ecurrent
  {N}etworks with {H}essian-{F}ree {O}ptimization}}, Springer Berlin
  Heidelberg, Berlin, Heidelberg, 2012, pp.~479--535.

\bibitem{MurtyKabadi1987}
{\sc K.~G. Murty and S.~N. Kabadi}, {\em Some {NP}-complete problems in
  quadratic and nonlinear programming}, Mathematical programming, 39 (1987),
  pp.~117--129.

\bibitem{NocedalWright2006}
{\sc J.~Nocedal and S.~Wright}, {\em {N}umerical {O}ptimization}, Springer
  Science \& Business Media, 2006.

\bibitem{PaternainMokhtariRibeiro2019}
{\sc S.~Paternain, A.~Mokhtari, and A.~Ribeiro}, {\em {A} {N}ewton-{B}ased
  {M}ethod for {N}onconvex {O}ptimization with {F}ast {E}vasion of {S}addle
  {P}oints}, SIAM Journal on Optimization, 29 (2019), pp.~343--368.

\bibitem{Pearlmutter1994}
{\sc B.~A. Pearlmutter}, {\em {F}ast exact multiplication by the {H}essian},
  Neural computation, 6 (1994), pp.~147--160.

\bibitem{ReddiZahirSraEtAl2017}
{\sc S.~J. Reddi, M.~Zaheer, S.~Sra, B.~Poczos, F.~Bach, R.~Salakhutdinov, and
  A.~J. Smola}, {\em {A} generic approach for escaping saddle points}, arXiv
  preprint arXiv:1709.01434,  (2017).

\bibitem{RoostaMahoney2016a}
{\sc F.~Roosta-Khorasani and M.~W. Mahoney}, {\em {S}ub-sampled {N}ewton
  methods i: globally convergent algorithms}, arXiv preprint arXiv:1601.04737,
  (2016).

\bibitem{RoostaMahoney2016b}
\leavevmode\vrule height 2pt depth -1.6pt width 23pt, {\em {S}ub-sampled
  {N}ewton methods ii: {L}ocal convergence rates}, arXiv preprint
  arXiv:1601.04738,  (2016).

\bibitem{RumelhartHintonWilliams1988}
{\sc D.~E. Rumelhart, G.~E. Hinton, R.~J. Williams, et~al.}, {\em {L}earning
  representations by back-propagating errors}, Cognitive modeling, 5 (1988),
  p.~1.

\bibitem{Saad2003}
{\sc Y.~Saad}, {\em {I}terative methods for sparse linear systems}, vol.~82,
  siam, 2003.

\bibitem{SaarinenBramleyCybenko1993}
{\sc S.~Saarinen, R.~Bramley, and G.~Cybenko}, {\em {I}ll-conditioning in
  neural network training problems}, SIAM Journal on Scientific Computing, 14
  (1993), pp.~693--714.

\bibitem{SagunBottouLeCun2016}
{\sc L.~Sagun, L.~Bottou, and Y.~LeCun}, {\em {E}igenvalues of the {H}essian in
  deep learning: Singularity and beyond}, arXiv preprint arXiv:1611.07476,
  (2016).

\bibitem{Shewchuk1994}
{\sc J.~R. Shewchuk et~al.}, {\em {A}n introduction to the conjugate gradient
  method without the agonizing pain}, 1994.

\bibitem{SunQuWright2015a}
{\sc J.~Sun, Q.~Qu, and J.~Wright}, {\em {W}hen are nonconvex problems not
  scary?}, arXiv preprint arXiv:1510.06096,  (2015).

\bibitem{SunQuWright2016}
\leavevmode\vrule height 2pt depth -1.6pt width 23pt, {\em {A} geometric
  analysis of phase retrieval}, in Information Theory (ISIT), 2016 IEEE
  International Symposium on, IEEE, 2016, pp.~2379--2383.

\bibitem{Tikhonov1963}
{\sc A.~N. Tihonov}, {\em {S}olution of incorrectly formulated problems and the
  regularization method}, Soviet Math., 4 (1963), pp.~1035--1038.

\bibitem{XuRoostaMahoney2017b}
{\sc P.~Xu, F.~Roosta-Khorasan, and M.~W. Mahoney}, {\em {S}econd-order
  optimization for non-convex machine learning: {A}n empirical study}, arXiv
  preprint arXiv:1708.07827,  (2017).

\bibitem{XuRoostaMahoney2017a}
{\sc P.~Xu, F.~Roosta-Khorasani, and M.~W. Mahoney}, {\em {N}ewton-type methods
  for non-convex optimization under inexact {H}essian information}, arXiv
  preprint arXiv:1708.07164,  (2017).

\end{thebibliography}

\newpage

\appendix

\section{Local Convergence Rate Estimates} \label{local_convergence}

%%%%%%%%%%%%%%%%%%%%%%%%%%%%%%%%%%%%%%%%%%%%%%%%%%%%%%%%%%%%%%%%%%%%%%%%%%%%%%%%%%%%%%%%%%%%%%%%%%%%%%%%%%%%%%%

\begin{lemma}\label{lipschitz_bound_alpha_k}
Suppose that assumptions \ref{assumption_a1} and \ref{assumption_a3} hold and $\alpha_k > 0$. Then the following bound holds:
\begin{equation}\label{lipschitz_bound_alpha_k_eq}
	\|\nabla^2 F(w_k)(w_k - w^*) - \alpha_k \nabla F (w_k)\| \leq \frac{M}{2}\|w_k - w^*\|^2 + L_{N_{S_k}}|1 - \alpha_k| \|w_k - w^*\|.
\end{equation}
\end{lemma}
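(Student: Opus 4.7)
The statement has the flavor of the standard ``Newton step residual'' bound (see e.g. Nocedal--Wright), so my plan is to follow the textbook template with a careful accounting of the $\alpha_k$ weight.

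First, since $w^*$ is a stationary point, $\nabla F(w^*)=0$, so the fundamental theorem of calculus gives
\begin{equation*}
   \nabla F(w_k) \;=\; \nabla F(w_k) - \nabla F(w^*) \;=\; \int_0^1 \nabla^2 F\bigl(w^*+t(w_k-w^*)\bigr)(w_k-w^*)\,dt.
\end{equation*}
The identity I would then exploit is a $(1-\alpha_k) + \alpha_k$ split:
\begin{equation*}
   \nabla^2 F(w_k)(w_k-w^*) - \alpha_k \nabla F(w_k)
   = (1-\alpha_k)\nabla^2 F(w_k)(w_k-w^*) + \alpha_k\bigl[\nabla^2 F(w_k)(w_k-w^*)-\nabla F(w_k)\bigr].
\end{equation*}
Substituting the integral representation of $\nabla F(w_k)$ into the bracketed term turns it into
\begin{equation*}
   \int_0^1 \bigl[\nabla^2 F(w_k)-\nabla^2 F(w^*+t(w_k-w^*))\bigr](w_k-w^*)\,dt,
\end{equation*}
which is the object designed for the Hessian-Lipschitz estimate.

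Now I would apply the triangle inequality. For the $(1-\alpha_k)$ piece, the spectral upper bound on the Hessian from Assumption \ref{assumption_a1} (inherited by $\nabla^2 F = \mathbb{E}[\nabla^2 F_S]$ from the componentwise bound by $L_{N_{S_k}}$) gives
\begin{equation*}
   \|(1-\alpha_k)\nabla^2 F(w_k)(w_k-w^*)\| \;\leq\; L_{N_{S_k}}\,|1-\alpha_k|\,\|w_k-w^*\|.
\end{equation*}
For the integral piece, Assumption \ref{assumption_a3} yields $\|\nabla^2 F(w_k)-\nabla^2 F(w^*+t(w_k-w^*))\| \leq M(1-t)\|w_k-w^*\|$ (reading the Lipschitz bound with the standard linear exponent in the argument), so the integral is bounded by $\tfrac{M}{2}\|w_k-w^*\|^2$; combined with the factor $\alpha_k\leq 1$ and the triangle inequality this yields the claimed $\tfrac{M}{2}\|w_k-w^*\|^2$ term.

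The main obstacles I anticipate are essentially bookkeeping rather than substance: (i) justifying the use of the bound $L_{N_{S_k}}$ on $\|\nabla^2 F(w_k)\|$ rather than on a subsampled Hessian as literally stated in Assumption \ref{assumption_a1} (this requires the observation that $\nabla^2 F$ is the expectation of subsampled Hessians and therefore inherits the same upper spectral bound), and (ii) reconciling the exponent in the Lipschitz condition \ref{assumption_a3} with the linear-in-$\|w-z\|$ form actually needed to obtain the $\tfrac{M}{2}\|w_k-w^*\|^2$ coefficient. Both are minor, and with them in hand the triangle inequality combining the two pieces above delivers exactly \eqref{lipschitz_bound_alpha_k_eq}.
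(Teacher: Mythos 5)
Your proof is close in spirit but uses a genuinely different decomposition from the paper's, and that difference introduces a real (if small) gap. The paper writes
\begin{equation*}
\nabla^2 F(w_k)(w_k-w^*)-\alpha_k\nabla F(w_k)=\bigl[\nabla^2 F(w_k)(w_k-w^*)-\nabla F(w_k)\bigr]+(1-\alpha_k)\nabla F(w_k),
\end{equation*}
so the Newton-residual term carrying the Lipschitz estimate appears with coefficient exactly $1$, and the $(1-\alpha_k)$ factor multiplies $\nabla F(w_k)$, which is then bounded by $L_{N_{S_k}}\|w_k-w^*\|$ via the same fundamental-theorem-of-calculus argument you use (an integral of Hessians along the segment, each bounded spectrally). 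Your convex-combination split
\begin{equation*}
(1-\alpha_k)\nabla^2 F(w_k)(w_k-w^*)+\alpha_k\bigl[\nabla^2 F(w_k)(w_k-w^*)-\nabla F(w_k)\bigr]
\end{equation*}
produces the same $L_{N_{S_k}}|1-\alpha_k|\,\|w_k-w^*\|$ contribution (arguably more directly, since you apply the spectral bound to $\nabla^2 F(w_k)$ itself rather than to an integral), but it attaches a factor $\alpha_k$ to the $\tfrac{M}{2}\|w_k-w^*\|^2$ term. You discard that factor by invoking $\alpha_k\leq 1$, which is \emph{not} among the hypotheses: the lemma assumes only $\alpha_k>0$, and the surrounding theorems are written with $|1-\alpha_k|$ precisely so that $\alpha_k>1$ is not excluded. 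For $\alpha_k>1$ your bound reads $\alpha_k\tfrac{M}{2}\|w_k-w^*\|^2$, which exceeds the claimed right-hand side. The fix is simply to adopt the paper's grouping (or to add $\alpha_k\leq 1$ as a hypothesis). Your two ``bookkeeping'' observations are both sound and are in fact shared by the paper's own argument: the paper likewise applies the $L_{N_{S_k}}$ bound to the full Hessian along the segment, and it likewise uses Assumption A3 as a standard (linear-exponent) Lipschitz condition via Lemma 2.2 of Bollapragada et al., so the $\|w-z\|^2$ in the statement of A3 should indeed be read as $\|w-z\|$.
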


\begin{proof}
The triangle inequality allows us to split the left hand side of \eqref{lipschitz_bound_alpha_k_eq} as follows:
\begin{align}
	&\|\nabla^2 F(w_k)(w_k - w^*) - \alpha_k \nabla F (w_k)\| \nonumber \\
  &= \|\nabla^2 F(w_k)(w_k - w^*) - \nabla F (w_k) + (1-\alpha_k)\nabla F(w_k)\| \nonumber \\
	&\leq \underbrace{\|\nabla^2 F(w_k)(w_k - w^*) - \nabla F (w_k)\|}_{\text{term 1}} + |1-\alpha_k|\underbrace{\|\nabla F(w_k)\|}_{\text{term 2}}.
\end{align}
By a derivation in Lemma 2.2 in \cite{BollapragadaByrdNocedal2018} that uses the Lipschitz continuity of the Hessian, we can bound term 1 by
\begin{equation}
	\|\nabla^2 F(w_k)(w_k - w^*) - \nabla F (w_k)\| \leq \frac{M}{2}\|w_k - w^*\|^2.
\end{equation}
Defining $h(t) = \nabla F(w^* + t(w_k - w^*))$, we may bound term 2 as follows:

\begin{align}
\|\nabla F(w_k)\| & = \big|\|\nabla F(w_k)\| - \|\nabla F(w^*)\| \big| \nonumber \\
                  & \leq \| \nabla F(w_k) - \nabla F(w^*)\| \nonumber \\
                  & = \| h(1) - h(0)\| \nonumber \\
                  & = \| \int_0^1 h'(t)dt \| \nonumber \\
                  & \leq  \int_0^1 \| h'(t)\|dt \nonumber \\
                  & = \int_0^1 \|\nabla^2 F(w^* + t(w_k - w*))(w_k - w^*) \| dt  \nonumber \\
                  & \leq \int_0^1 L_{N_{S_k}} \|w_k - w^*\|dt \nonumber \\
                  & = L_{N_{S_k}} \| w_k - w^*\|.
\end{align}
\end{proof}

\begin{lemma}\label{stochastic_newton_bound_alpha_k}
Suppose that assumptions \ref{assumption_a1}-\ref{assumption_a4} hold, and $\alpha_k > 0$, then
\begin{align}
  &\mathbb{E}_k[\| \nabla^2 F_{S_k}(w_k)(w_k -w^*)  - \alpha_k\nabla F_{X_k}(w_k) \|] \leq \nonumber \\ 
  &\frac{M}{2}\|w_k - w^*\|^2 + \bigg( L_{N_{S_k}}|1 - \alpha_k|  +\frac{\sigma}{\sqrt{N_{S_k}}}\bigg)\|w_k - w^*\| + \frac{\alpha_k v}{\sqrt{N_{X_k}}}.
\end{align}
\end{lemma}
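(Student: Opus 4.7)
The plan is to reduce the stochastic statement to the deterministic bound already proved in Lemma \ref{lipschitz_bound_alpha_k} by adding and subtracting the full Hessian and full gradient terms, then using Assumptions \ref{assumption_a2} and \ref{assumption_a4} to control the resulting sampling errors in expectation.

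First I would write the decomposition
\begin{align*}
  \nabla^2 F_{S_k}(w_k)(w_k - w^*) - \alpha_k \nabla F_{X_k}(w_k)
  &= \bigl[\nabla^2 F(w_k)(w_k - w^*) - \alpha_k \nabla F(w_k)\bigr] \\
  &\quad + \bigl[\nabla^2 F_{S_k}(w_k) - \nabla^2 F(w_k)\bigr](w_k - w^*) \\
  &\quad - \alpha_k \bigl[\nabla F_{X_k}(w_k) - \nabla F(w_k)\bigr],
\end{align*}
apply the triangle inequality, and take the conditional expectation $\mathbb{E}_k$. The first summand is deterministic and is bounded directly by Lemma \ref{lipschitz_bound_alpha_k}, giving the $\tfrac{M}{2}\|w_k - w^*\|^2 + L_{N_{S_k}}|1-\alpha_k|\|w_k - w^*\|$ contribution.

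For the Hessian sampling error, I would pull $\|w_k - w^*\|$ out of the operator norm (it is deterministic with respect to $\mathbb{E}_k$) and invoke Jensen's inequality together with Assumption \ref{assumption_a4}: since $\nabla^2 F_{S_k}(w_k) - \nabla^2 F(w_k)$ is an average of $N_{S_k}$ i.i.d.\ mean-zero matrix increments, a standard variance computation yields
\begin{equation*}
  \mathbb{E}_k\bigl[\|\nabla^2 F_{S_k}(w_k) - \nabla^2 F(w_k)\|\bigr] \le \frac{\sigma}{\sqrt{N_{S_k}}},
\end{equation*}
producing the $\tfrac{\sigma}{\sqrt{N_{S_k}}}\|w_k - w^*\|$ term. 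For the gradient sampling error, the same argument with Assumption \ref{assumption_a2} gives $\mathbb{E}_k[\|\nabla F_{X_k}(w_k) - \nabla F(w_k)\|] \le v/\sqrt{N_{X_k}}$, contributing the final $\alpha_k v/\sqrt{N_{X_k}}$ term. Summing the three bounds yields the claim.

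The only subtle step is the passage from the componentwise variance bound in Assumption \ref{assumption_a4} (which controls $\|\mathbb{E}[(\nabla^2 F_i - \nabla^2 F)^2]\|$) to the expected operator-norm deviation $\mathbb{E}_k[\|\nabla^2 F_{S_k} - \nabla^2 F\|] \le \sigma/\sqrt{N_{S_k}}$. I would justify this by writing the variance of the sample mean as $\tfrac{1}{N_{S_k}}\mathbb{E}[(\nabla^2 F_i - \nabla^2 F)^2]$, using Jensen's inequality to pass $\mathbb{E}[\|\cdot\|] \le \sqrt{\mathbb{E}\|\cdot\|^2}$, and applying the identity $\|X\|^2 = \|X^2\|$ for symmetric $X$; this is the same reduction used implicitly in \cite{BollapragadaByrdNocedal2018}, so the bulk of the work is bookkeeping rather than new analysis.
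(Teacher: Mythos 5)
Your decomposition and overall route are exactly what the paper does: it splits the quantity into the deterministic piece handled by Lemma \ref{lipschitz_bound_alpha_k}, a Hessian sampling error, and a gradient sampling error, and then cites Lemmas 2.2 and 2.3 of \cite{BollapragadaByrdNocedal2018} for the latter two. The first and third pieces of your argument are fine: the gradient term works because $\mathbb{E}_k\|\nabla F_{X_k}-\nabla F\|^2 = \tfrac{1}{N_{X_k}}\operatorname{tr}(\text{Cov}(\nabla F_i)) \le v^2/N_{X_k}$, and Cauchy--Schwarz/Jensen then gives $v/\sqrt{N_{X_k}}$.

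However, your justification of the Hessian sampling term has a genuine flaw. You propose to first pull $\|w_k-w^*\|$ out and then establish the operator-norm bound $\mathbb{E}_k[\|Z\|]\le \sigma/\sqrt{N_{S_k}}$ for $Z=\nabla^2 F_{S_k}-\nabla^2 F$ via $\mathbb{E}[\|Z\|]\le\sqrt{\mathbb{E}\|Z\|^2}=\sqrt{\mathbb{E}\|Z^2\|}$ together with Assumption \ref{assumption_a4}. But to finish that chain you would need $\mathbb{E}[\|Z^2\|]\le\|\mathbb{E}[Z^2]\|$, which is the \emph{reverse} of Jensen's inequality (the norm is convex, so $\|\mathbb{E}[Z^2]\|\le\mathbb{E}[\|Z^2\|]$); an expected-operator-norm bound with the constant $\sigma/\sqrt{N_{S_k}}$ does not follow from A4 alone (matrix concentration bounds of this type generally carry extra dimensional factors). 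The repair is to keep the fixed vector $e_k=w_k-w^*$ inside before taking norms:
\begin{equation*}
  \mathbb{E}_k\bigl[\|Z e_k\|\bigr] \le \sqrt{\mathbb{E}_k\bigl[\|Z e_k\|^2\bigr]} = \sqrt{e_k^T\,\mathbb{E}_k[Z^2]\,e_k} \le \sqrt{\|\mathbb{E}_k[Z^2]\|}\;\|e_k\| \le \frac{\sigma}{\sqrt{N_{S_k}}}\|e_k\|,
\end{equation*}
where the last step uses independence and zero mean of the increments so that $\mathbb{E}_k[Z^2]=\tfrac{1}{N_{S_k}}\mathbb{E}[(\nabla^2 F_i-\nabla^2 F)^2]$. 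This is precisely the argument of Lemma 2.3 in \cite{BollapragadaByrdNocedal2018}; with that substitution your proof is correct and coincides with the paper's.
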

\begin{proof}
This result follows immediately from Lemma \ref{lipschitz_bound_alpha_k} and Lemmas 2.2 and 2.3 in \cite{BollapragadaByrdNocedal2018}.
\end{proof}

\subsection{Proof of Theorem \ref{inexact_eisenprop}}\label{proof_of_inexact_eisenprop}
\enskip \\
\inexacteisen*
\begin{proof}
We begin by expanding the left hand side of equation \ref{inexact_eisenprop_bound} and employing the triangle inequality:
\begin{align}
  &\mathbb{E}_k[\|w_{k+1} - w^*\|] \nonumber\\
  & =\mathbb{E}_k[ \|w_k - w^* - \alpha_k p_k\|] \nonumber\\
             &= \mathbb{E}_k[\|\nabla^2 \overline{F}_{S_k}(w_k)^{-1}(\nabla^2\overline{F}_{S_k}(w_k)(w_k - w^*) \nonumber \\
              & \quad - \alpha_k\nabla^2\overline{F}_{S_k}(w_k)p_k + \alpha_k \nabla \overline{F}_{X_k}(w_k) - \alpha_k\nabla \overline{F}_{X_k}(w_k))\|] \nonumber \\
             &\leq \frac{1}{\gamma - \epsilon_H}\underbrace{\mathbb{E}_k[\| \nabla^2 \overline{F}_{S_k}(w_k)(w_k -w^*)  - \alpha_k\nabla \overline{F}_{X_k}(w_k) \|]}_{\text{term 1}}  \nonumber \\
             &\quad +\frac{\alpha_k}{\gamma - \epsilon_H}\underbrace{\mathbb{E}_k[\|\nabla \overline{F}_{X_k}(w_k) - \nabla^2 \overline{F}_{S_k}(w_k)p_k\|]}_{\text{term 2}}.
\end{align}
Term 1 can be bounded as
\begin{align}
  &\frac{1}{\gamma - \epsilon_H}\mathbb{E}_k[\| \nabla^2 \overline{F}_{S_k}(w_k)(w_k -w^*)  - \alpha_k\nabla \overline{F}_{X_k}(w_k) \|] \nonumber \\ 
  &\leq \frac{1}{\gamma - \epsilon_H}\bigg[\frac{M}{2}\|w_k - w^*\|^2 + \bigg(L_{N_{S_k}}|1 - \alpha_k| +\frac{\sigma}{\sqrt{N_{S_k}}}\bigg)\|w_k - w^*\| + \frac{\alpha_k v}{\sqrt{N_{X_k}}}\bigg]. 
\end{align}
By  Lemma \ref{stochastic_newton_bound_alpha_k}. For term 2 we have that by assumption
\begin{equation}
  \mathbb{E}_k[\|\nabla \overline{F}_{X_k}(w_k) - \nabla^2 \overline{F}_{S_k}(w_k)p_k\|] \leq  \mathbb{E}_k [\eta_k\|\nabla \overline{F}_{X_k}(w_k)\|] \leq \mathbb{E}_k[ \|\nabla \overline{F}_{X_k}(w_k)\|^2].
\end{equation}
By a bound given in Theorem 2.1 in \cite{BollapragadaByrdNocedal2018} we have the Monte Carlo approximation error:
\begin{equation}
  \mathbb{E}_k[ \|\nabla \overline{F}_{X_k}(w_k) - \nabla \overline{F}(w_k)\|^2] \leq \frac{v^2}{N_{X_k}}.
\end{equation}
Employing the reverse triangle inequality we have
\begin{equation} \label{reverse_treq_bound}
  \mathbb{E}_k[ \|\nabla \overline{F}_{X_k}(w_k)\|^2] \leq \bigg(\|\nabla \overline{F}(w_k)\| + \frac{v}{\sqrt{N_{X_k}}}\bigg)^2.
\end{equation}
By Lemma 1.2 in \cite{EisenstatWalker1996} we have that
\begin{equation} \label{eisenbound}
   \|\nabla \overline{F}(w_k)\| \leq \mu \|w_k- w^*\|.
\end{equation}
So combining equations \eqref{reverse_treq_bound} and \eqref{eisenbound} we get
\begin{equation}
  \mathbb{E}_k[\|\nabla \overline{F}_{X_k}(w_k)\|^2] \leq \mu^2 \|w_k - w^*\|^2 +\frac{2v\mu}{\sqrt{N_{X_k}}}\|w_k - w^*\| + \frac{v^2}{N_{X_k}}.
\end{equation}
\end{proof}

\subsection{Proof of Theorem \ref{lowrankconvergence}}\label{low_rank_appendix}
\enskip \\

\begin{lemma}\label{newton_iterates_low_rank}
Let $\{w_k\}$ be the iterates generated by \eqref{trunc_newton_update}, and suppose that assumptions \ref{assumption_a1}-\ref{assumption_a3} hold, then for each $k$

\begin{align} \label{low_rank_iterates_eq}
  \mathbb{E}_k[\|w_{k+1} - w^*\|] \leq& \nonumber \\\frac{1}{|\overline{\lambda_{r}^{(k)}}+\gamma|}\bigg[&\frac{M}{2}\|w_k - w^*\|^2 +L_{N_{S_k}} |1 - \alpha_k|\|w_k - w^*\| \nonumber \\
    &+ \mathbb{E}_k\|([H_k^{(r)} + \gamma I](w_k) - \nabla^2 \overline{F} (w_k))(w_k - w^*)\| + \frac{\alpha_k v}{\sqrt{N_{X_k}}} \bigg],
\end{align}
where $\overline{\lambda_{r}^{(k)}} = \mathbb{E}_k[\lambda_{r}^{(k)}]$.
\end{lemma}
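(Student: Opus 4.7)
The plan is to mirror the strategy used in Theorem \ref{inexact_eisenprop}, with the truncated Tikhonov-regularized operator $[H_k^{(r)} + \gamma I]$ playing the role of the system matrix. First, starting from the iteration \eqref{trunc_newton_update} and subtracting $w^*$, I would factor out the inverse to write
\begin{equation*}
  w_{k+1} - w^* = [H_k^{(r)} + \gamma I]^{-1}\Bigl([H_k^{(r)} + \gamma I](w_k - w^*) - \alpha_k\nabla \overline{F}_{X_k}(w_k)\Bigr).
\end{equation*}
By the ordering of eigenvalues by magnitude together with Assumption \ref{assumption_a1} (positive retained eigenvalues) and the Tikhonov shift by $\gamma$, the eigenvalue of $H_k^{(r)} + \gamma I$ that controls the inverse's action in the retained subspace is $|\lambda_r^{(k)} + \gamma|$, which furnishes the reciprocal prefactor appearing in the claim.

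\textbf{Splitting the residual.} Next I would add and subtract $\nabla^2 \overline{F}(w_k)(w_k - w^*)$ and $\alpha_k \nabla \overline{F}(w_k)$ inside the norm, and apply the triangle inequality to obtain three pieces:
\begin{align*}
  &\bigl\|[H_k^{(r)} + \gamma I](w_k - w^*) - \alpha_k\nabla \overline{F}_{X_k}(w_k)\bigr\| \\
  &\qquad \leq \bigl\|\bigl([H_k^{(r)} + \gamma I] - \nabla^2 \overline{F}(w_k)\bigr)(w_k - w^*)\bigr\| \\
  &\qquad \quad + \bigl\|\nabla^2 \overline{F}(w_k)(w_k - w^*) - \alpha_k\nabla \overline{F}(w_k)\bigr\| \\
  &\qquad \quad + \alpha_k\bigl\|\nabla \overline{F}(w_k) - \nabla \overline{F}_{X_k}(w_k)\bigr\|.
\end{align*}
The first piece is precisely the low-rank-plus-Hessian-sampling approximation error recorded in \eqref{low_rank_iterates_eq}. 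The middle piece is handled by Lemma \ref{lipschitz_bound_alpha_k} applied to $\overline{F}$, which supplies $\tfrac{M}{2}\|w_k - w^*\|^2 + L_{N_{S_k}}|1 - \alpha_k|\|w_k - w^*\|$. The third piece is the conditional Monte Carlo variance of the sample gradient; after taking $\mathbb{E}_k$ and using Jensen's inequality with Assumption \ref{assumption_a2} (as in the $X_k$ gradient bound invoked in the proof of Theorem \ref{inexact_eisenprop}), it contributes $\alpha_k v/\sqrt{N_{X_k}}$.

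\textbf{Assembly and main obstacle.} Multiplying the triangle-inequality bound by the operator norm of the inverse, taking conditional expectation, and identifying $\mathbb{E}_k\bigl[(\lambda_r^{(k)} + \gamma)^{-1}\bigr]$ with $(\overline{\lambda_r^{(k)}} + \gamma)^{-1}$ delivers the stated bound. The main obstacle is precisely this last identification: since $x \mapsto 1/x$ is convex, Jensen's inequality points the wrong way for a direct bound. The cleanest route is to invoke spectral concentration of the subsampled Hessian (in the spirit of the Operator-Bernstein-type bound stated earlier in the paper for uniform sampling), so that $\lambda_r^{(k)}$ concentrates around its mean for the batch size $N_{S_k}$ used, making the substitution rigorous up to a lower-order correction that can be absorbed into the other error constants. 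A secondary subtlety is verifying that the eigenvalue controlling $\|[H_k^{(r)} + \gamma I]^{-1}\|$ in the relevant subspace is indeed $|\lambda_r^{(k)} + \gamma|$ rather than $\gamma$; this relies on the magnitude ordering of eigenvalues together with the positivity hypothesis in Assumption \ref{assumption_a1}, and is where the specific low-rank structure of the method enters the bound.
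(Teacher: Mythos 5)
Your proposal follows essentially the same route as the paper's own proof: factor out $[H_k^{(r)}+\gamma I]^{-1}$, add and subtract $\nabla^2\overline{F}(w_k)(w_k-w^*)$ and $\alpha_k\nabla\overline{F}(w_k)$, and bound the three resulting pieces by the low-rank/sampling error term, Lemma \ref{lipschitz_bound_alpha_k}, and the $v/\sqrt{N_{X_k}}$ Monte Carlo gradient bound, respectively. The two subtleties you flag are real, but the paper's proof simply elides them --- it passes directly to the prefactor $1/|\overline{\lambda_r^{(k)}}+\gamma|$ without addressing either the direction of Jensen's inequality for $x\mapsto 1/x$ or the fact that on $\mathrm{span}\{U_r^{(k)}\}^{\perp}$ the inverse acts with the larger factor $1/\gamma$ --- so your concentration-based repair goes beyond what the paper actually does rather than diverging from it.
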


\begin{proof}
Expanding the left hand side of equation \eqref{low_rank_iterates_eq} and using the triangle inequality we can derive the following bound:
\begin{align}
  &\mathbb{E}_k[\|w_{k+1} - w^*\|] \nonumber\\
  & =\mathbb{E}_k[ \|w_k - w^* - \alpha_k[H_k^{(r)} + \gamma I]^{-1}(w_k)\nabla \overline{F}_{X_k}(w_k)\|] \nonumber\\
             &= \mathbb{E}_k[\|[H_k^{(r)} + \gamma I]^{-1}([H_k^{(r)} + \gamma I](w_k)(w_k - w^*) \nonumber \\
             & \quad - \alpha_k\nabla \overline{F}(w_k) - \alpha_k\nabla \overline{F}_{X_k}(w_k) + \alpha_k\nabla \overline{F}(w_k))\|] \nonumber \\
             &\leq \frac{1}{|\overline{\lambda_{r}^{(k)}}+\gamma|}\mathbb{E}_k[\|\big([H_k^{(r)} + \gamma I](w_k) - \nabla^2\overline{F}(w_k)\big) (w_k - w^*) \nonumber \\
             & \qquad\qquad\qquad+ \nabla^2 \overline{F}(w_k)(w_k - w^*) - \alpha_k\nabla \overline{F}(w_k)\|] \nonumber\\
             & \quad+\frac{\alpha_k}{|\overline{\lambda_{r}^{(k)}}+\gamma|}\mathbb{E}_k[\|\nabla \overline{F}_{X_k}(w_k) - \nabla \overline{F}(w_k)\|].
\end{align}
Therefore,
\begin{align}
  \mathbb{E}_k[\|w_{k+1} - w^*\|] &\leq \underbrace{\frac{1}{|\overline{\lambda_{r}^{(k)}}+\gamma|}\|\nabla^2 \overline{F}(w_k)(w_k-w^*) - \alpha_k\nabla \overline{F}(w_k)\|}_{\text{term 1}} \nonumber\\
  &+ \underbrace{\frac{1}{|\overline{\lambda_{r}^{(k)}}+\gamma|} \mathbb{E}_k[\|[H_k^{(r)} + \gamma I] - \nabla^2 \overline{F}(w_k))(w_k - w^*)]}_{\text{term 2}} \nonumber \\
  &+ \underbrace{\frac{\alpha_k}{|\overline{\lambda_{r}^{(k)}}+\gamma|}\mathbb{E}_k [\|\nabla \overline{F}_{X_k}(w_k) - \nabla \overline{F}(w_k)\|]}_{\text{term 3}}.
\end{align}
For term 1, a bound is given by Lemma \ref{lipschitz_bound_alpha_k}
\begin{align}
   &\frac{1}{|\overline{\lambda_{r}^{(k)}}+\gamma|}\|\nabla^2\overline{F}(w_k)(w_k - w^*) - \nabla \overline{F}(w_k)\| \nonumber \\
  &\leq \frac{M}{2|\overline{\lambda_{r}^{(k)}}+\gamma|}\|w_k - w^*\|^2+\frac{L_{N_{S_k}}|1-\alpha_k|}{|\overline{\lambda_{r}^{(k)}}+\gamma|}\|w_k -w^*\|.
\end{align}
Term 3 can be bounded by a result given in Lemma 2.2 of \cite{BollapragadaByrdNocedal2018}:
\begin{align}
\mathbb{E}_k [ \|\nabla \overline{F}(w_k) - \nabla \overline{F}_{X_k}\|]  \leq \frac{v}{\sqrt{N_{X_k}}}.
\end{align}
\end{proof}

\begin{lemma}\label{bounds_for_exact_truncated Hessian}
Bounds for exact truncated Hessian approximation. Suppose that Assumptions \ref{assumption_a1} and \ref{assumption_a4} hold, then

\begin{equation} \label{exact_truncated_hess_eq}
  \mathbb{E}_k\|[H_k^{(r)} + \gamma I](w_k) - \nabla^2 \overline{F} (w_k))(w_k - w^*)\| \leq \bigg[\overline{|\lambda_{r+1}^{(k)}|} + \gamma + \frac{\sigma}{\sqrt{N_{S_k}}} \bigg]\|w_k - w^*\|.
\end{equation}

\end{lemma}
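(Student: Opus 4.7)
\textbf{Proof proposal for Lemma \ref{bounds_for_exact_truncated Hessian}.} The strategy is to factor out $\|w_k - w^*\|$ via operator-norm sub-multiplicativity, then bound the expected spectral norm of the residual operator by triangle inequality into three natural components corresponding to the three error sources appearing on the right-hand side:
\begin{equation*}
  [H_k^{(r)}+\gamma I] - \nabla^2 \overline{F}(w_k) = \big(H_k^{(r)} - \nabla^2 F_{S_k}(w_k)\big) + \big(\nabla^2 F_{S_k}(w_k) - \nabla^2 F(w_k)\big) + \big(\gamma I + \nabla^2 F(w_k) - \nabla^2 \overline{F}(w_k)\big),
\end{equation*}
representing, respectively, low-rank truncation of the subsampled Hessian, Monte Carlo subsampling error, and the Tikhonov regularization shift.

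For the truncation piece, since $H_k^{(r)}$ is the best rank-$r$ approximation in operator norm of the symmetric matrix $\nabla^2 F_{S_k}(w_k)$, the Eckart-Young-Mirsky theorem gives the deterministic identity $\|H_k^{(r)} - \nabla^2 F_{S_k}(w_k)\| = |\lambda_{r+1}^{(k)}|$; taking $\mathbb{E}_k$ yields $\overline{|\lambda_{r+1}^{(k)}|}$. For the regularization piece, $\|\gamma I\| = \gamma$ is deterministic. For the subsampling piece, I would write the mean-zero decomposition $\nabla^2 F_{S_k}(w_k) - \nabla^2 F(w_k) = \frac{1}{N_{S_k}}\sum_{i \in S_k}\big(\nabla^2 F_i(w_k) - \nabla^2 F(w_k)\big)$ and combine independence across $i \in S_k$ with the matrix variance bound of Assumption A4 to conclude $\mathbb{E}_k\|\nabla^2 F_{S_k}(w_k) - \nabla^2 F(w_k)\|^2 \leq \sigma^2/N_{S_k}$; Jensen's inequality then yields the $\sigma/\sqrt{N_{S_k}}$ bound on the expected operator norm. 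This is exactly the matrix Monte Carlo calculation already performed in Lemma 2.3 of \cite{BollapragadaByrdNocedal2018}, which I would invoke by reference rather than reproduce.

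Assembling the three bounds by triangle inequality, multiplying through by $\|w_k - w^*\|$ (which is deterministic under the $\mathbb{E}_k$ conditioning on the current iterate), and taking the conditional expectation completes the proof. The main subtlety is the matrix Monte Carlo step, since passing from the matrix-variance assumption A4 to an expected operator-norm bound on the batch mean requires care with how the variance of sums of independent matrices relates to the spectral norm; fortunately this is a standard computation packaged in the cited lemma, so the proof reduces to a clean three-way triangle inequality plus the Eckart-Young identity.
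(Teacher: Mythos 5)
Your proposal is correct and follows essentially the same route as the paper: a triangle-inequality split of the residual operator into the spectral-truncation error (bounded by $|\lambda_{r+1}^{(k)}|$), the $\gamma I$ shift, and the Hessian subsampling error (bounded by $\sigma/\sqrt{N_{S_k}}$ via Lemma 2.3 of \cite{BollapragadaByrdNocedal2018}), followed by sub-multiplicativity to factor out $\|w_k - w^*\|$. The only cosmetic difference is that you perform the three-way decomposition in a single step, whereas the paper splits off the subsampling error first and then extracts the $\gamma I$ term from the remainder.
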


\begin{proof}
Expanding the left hand side of equation \eqref{exact_truncated_hess_eq}, and employing the triangle inequality, the following bound can be derived:
\begin{align}
\mathbb{E}_k\|([H_k^{(r)}(w_k) + \gamma I] - \nabla^2 \overline{F} (w_k))(w_k - w^*)\| \leq \nonumber\\
 \underbrace{\mathbb{E}_k\|([H_k^{(r)}(w_k) + \gamma I] - \nabla^2 \overline{F}_{S_k} (w_k))(w_k - w^*)\|}_{\text{term 1}} \\
 + \underbrace{\mathbb{E}_k\|(\nabla^2 \overline{F} (w_k)- \nabla^2 \overline{F}_{S_k} (w_k))(w_k - w^*)\|}_{\text{term 2}} .
\end{align}
The first term is bounded as 
\begin{align}
 & \quad\mathbb{E}_k\|([H_k^{(r)}(w_k)- \nabla^2 \overline{F}_{S_k} (w_k)) + \gamma I] (w_k - w^*)\|  \nonumber \\
  &\leq \mathbb{E}_k\|(H_k^{(r)}(w_k)- \nabla^2 \overline{F}_{S_k} (w_k)) (w_k - w^*)\|+ \gamma\|w_k - w^*\|  \\
  & \leq\mathbb{E}_k\|H_k^{(r)}(w_k)- \nabla^2 \overline{F}_{S_k} (w_k)\|\| (w_k - w^*)\| + \gamma\|w_k - w^*\| \nonumber \\
  &= (\overline{|\lambda_{r+1}^{(k)}|} + \gamma) \|w_k - w^*\|.
\end{align}
The second term is bounded by
\begin{equation}
  \frac{\sigma}{\sqrt{N_{S_k}}}\|w_k - w^*\|,
\end{equation}
via Lemma 2.3 in \cite{BollapragadaByrdNocedal2018}.
\end{proof}

\begin{lemma}\label{randomized_lemma}
Bounds for randomized truncated Hessian approximation. Suppose that assumptions \ref{assumption_a1} and \ref{assumption_a4} hold, and $2 \leq r \leq \frac{d}{2}$, and $H_k^{(r)}$ is calculated via randomized SVD, with random matrices drawn from a Gaussian  probability measure $\rho$. Then:
\begin{align}\label{randomized_lemma_eq}
  &\mathbb{E}_k\big[\mathbb{E}_\rho\|[H_k^{(r)} + \gamma I](w_k) - \nabla^2 \overline{F} (w_k))(w_k - w^*)\|\big] \nonumber \\
  &\leq \bigg(\bigg(1+4\frac{\sqrt{d(r+p)}}{p-1}\bigg)\overline{|\lambda_{r+1}^{(k)}|} + \gamma + \frac{\sigma}{\sqrt{N_{S_k}}}\bigg)\|w_k - w^*\|
\end{align}

\end{lemma}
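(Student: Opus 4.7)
The plan is to mirror the structure of Lemma \ref{bounds_for_exact_truncated Hessian}, replacing the deterministic truncation error by the expected error of the randomized SVD approximation. First, I would insert $\nabla^2 \overline{F}_{S_k}(w_k)$ and apply the triangle inequality to split the quantity of interest:
\begin{align*}
\|([H_k^{(r)}+\gamma I]&-\nabla^2 \overline{F}(w_k))(w_k-w^*)\|\\
&\le \|(H_k^{(r)}-\nabla^2 \overline{F}_{S_k}(w_k))(w_k-w^*)\|+\gamma\|w_k-w^*\|\\
&\quad+\|(\nabla^2 \overline{F}_{S_k}(w_k)-\nabla^2 \overline{F}(w_k))(w_k-w^*)\|.
\end{align*}
The $\gamma$ term is already in closed form, and after taking $\mathbb{E}_k$ the third term is bounded by $\frac{\sigma}{\sqrt{N_{S_k}}}\|w_k-w^*\|$ exactly as in Lemma \ref{bounds_for_exact_truncated Hessian}, via Lemma 2.3 of \cite{BollapragadaByrdNocedal2018}. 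Note that this third term does not depend on the random matrix used by the randomized SVD, so the outer $\mathbb{E}_\rho$ passes through trivially.

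The substantive step is to control the first term under the randomized SVD. I would first use submultiplicativity to move the deviation vector outside the operator norm:
\begin{equation*}
\|(H_k^{(r)}-\nabla^2 \overline{F}_{S_k}(w_k))(w_k-w^*)\|\le \|H_k^{(r)}-\nabla^2 \overline{F}_{S_k}(w_k)\|\,\|w_k-w^*\|.
\end{equation*}
Then I would invoke the expected-error bound for the double-pass randomized SVD (Theorem 10.6 of \cite{HalkoMartinssonTropp2011}) with Gaussian test matrices: for any symmetric matrix $A\in\mathbb{R}^{d\times d}$ with target rank $r\ge 2$ and oversampling $p\ge 2$ such that $r+p\le d$,
\begin{equation*}
\mathbb{E}_\rho\|A-A^{(r)}_{\text{RSVD}}\|\le\left(1+4\,\frac{\sqrt{r+p}}{p-1}\sqrt{d-r}\right)\sigma_{r+1}(A).
\end{equation*}
Applied to $A=\nabla^2 \overline{F}_{S_k}(w_k)$, one has $\sigma_{r+1}(A)=|\lambda_{r+1}^{(k)}|$, and the prefactor can be loosened to $1+4\sqrt{d(r+p)}/(p-1)$ under the assumption $r\le d/2$, which matches the constant stated in the lemma.

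The only genuine care points are (i) making sure the expectation over $\rho$ can be pushed inside the norm (Jensen/monotonicity of norm) so that $\mathbb{E}_\rho\|\cdot\|$ inherits the Halko--Martinsson--Tropp bound, and (ii) bringing the $\mathbb{E}_k$ inside to produce $\overline{|\lambda_{r+1}^{(k)}|}=\mathbb{E}_k[|\lambda_{r+1}^{(k)}|]$. Since the randomization $\rho$ is independent of the sample draw $S_k$, the two expectations commute, so applying $\mathbb{E}_k$ to the bound on $\|H_k^{(r)}-\nabla^2 \overline{F}_{S_k}\|$ produces $\bigl(1+4\sqrt{d(r+p)}/(p-1)\bigr)\,\overline{|\lambda_{r+1}^{(k)}|}$. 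Assembling the three pieces yields \eqref{randomized_lemma_eq}. The main obstacle is strictly bookkeeping: verifying that the HMT prefactor, which is stated in \cite{HalkoMartinssonTropp2011} in terms of $\sqrt{\min(m,n)-r}$ together with a separate tail involving $\sqrt{r+p}$, can be consolidated into the single product form $\sqrt{d(r+p)}/(p-1)$ claimed here; everything else is a direct reuse of Lemma \ref{bounds_for_exact_truncated Hessian}.
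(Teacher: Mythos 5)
Your proposal is correct and follows essentially the same route as the paper's proof: the identical three-way triangle-inequality split (randomized truncation error plus $\gamma$ shift plus subsampling error), submultiplicativity to peel off $\|w_k-w^*\|$, the Halko--Martinsson--Tropp expected spectral-error bound for Gaussian test matrices (the paper cites their equation~(1.8), which already carries the prefactor $1+4\sqrt{d(r+p)}/(p-1)$ directly, so no consolidation of a $\sqrt{d-r}$ factor is needed), and Lemma~2.3 of \cite{BollapragadaByrdNocedal2018} for the $\sigma/\sqrt{N_{S_k}}$ term. The bookkeeping points you flag (pushing $\mathbb{E}_\rho$ and $\mathbb{E}_k$ through, yielding $\overline{|\lambda_{r+1}^{(k)}|}$) are handled the same way in the paper.
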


\begin{proof}
Expanding the left hand side of equation \eqref{randomized_lemma_eq} and employing the triangle inequality, the following bound can be established:
\begin{align}
\mathbb{E}_k\big[\mathbb{E}_\rho\|([H_k^{(r)}(w_k) + \gamma I] - \nabla^2 \overline{F} (w_k))(w_k - w^*)\|\big] \leq \nonumber\\
 \underbrace{\mathbb{E}_k\big[\mathbb{E}_\rho\|([H_k^{(r)}(w_k) + \gamma I] - \nabla^2 \overline{F}_{S_k} (w_k))(w_k - w^*)\|\big]}_{\text{term 1}} \\
 + \underbrace{\mathbb{E}_k\|(\nabla^2 \overline{F} (w_k)- \nabla^2 \overline{F}_{S_k} (w_k))(w_k - w^*)\|}_{\text{term 2}} 
\end{align}

The first term is bounded as 
\begin{align}
 & \quad\mathbb{E}_k\big[\mathbb{E}_\rho\|([H_k^{(r)}(w_k)- \nabla^2 \overline{F}_{S_k} (w_k)) + \gamma I] (w_k - w^*)\|\big]  \nonumber \\
  &\leq \mathbb{E}_k\big[\mathbb{E}_\rho\|(H_k^{(r)}(w_k)- \nabla^2 \overline{F}_{S_k} (w_k)) (w_k - w^*)\|\big]+ \gamma\|w_k - w^*\|  \\
  & \leq\mathbb{E}_k\big[\mathbb{E}_\rho\|H_k^{(r)}(w_k)- \nabla^2 \overline{F}_{S_k} (w_k)\|\| (w_k - w^*)\|\big] + \gamma\|w_k - w^*\| \nonumber \\
  &\leq \mathbb{E}_k \bigg(\bigg(1+4\frac{\sqrt{d(r+p)}}{p-1}\bigg)|\lambda_{r+1}^{(k)}| + \gamma\bigg) \|w_k - w^*\| \nonumber \\
  &= \bigg(\bigg(1+4\frac{\sqrt{d(r+p)}}{p-1}\bigg)\overline{|\lambda_{r+1}^{(k)}|} + \gamma\bigg) \|w_k - w^*\|,
\end{align}
where the second to last bound comes from equation 1.8 in \cite{HalkoMartinssonTropp2011}. The second term is bounded by
\begin{equation}
  \frac{\sigma}{\sqrt{N_{S_k}}}\|w_k - w^*\|,
\end{equation}
via Lemma 2.3 in \cite{BollapragadaByrdNocedal2018}.
\end{proof}

\lowrankconv*

\begin{proof}
This result follows immediately from Lemma \eqref{newton_iterates_low_rank}, Lemma \eqref{bounds_for_exact_truncated Hessian} and Lemma \eqref{randomized_lemma}.
\end{proof}

\subsection{Proof of Theorem \ref{incg_convergence}} \label{proof_of_incg_convergence}
\enskip \\

\incgconvergencerestat*

\begin{proof}
Expanding the left hand side of equation \eqref{incg_bound_eq} and employing the triangle inequality, the following bound can be established:
\begin{align}
&\mathbb{E}_k[\|w_{k+1} - w^*\|] = \mathbb{E}_k [\|w_k - w^* + \alpha_k p_k^r\|] \nonumber\\
& \leq \underbrace{\mathbb{E}_k[\|w_ k - w^* - \alpha_k \nabla^2 \overline{F}_{S_k}(w_k)\nabla \overline{F}_{X_k}(w_k)  \|]}_{\text{term 1}} + \alpha_k\underbrace{\mathbb{E}_k[\| p_k^r - \nabla^2\overline{F}_{S_k}^{-1}(w_k)\nabla \overline{F}_{X_k}   \|]}_{\text{term 2}}.
\end{align}
The first term can be bounded as 
\begin{align}
  &\mathbb{E}_k[\|w_ k - w^* - \alpha_k \nabla^2 \overline{F}_{S_k}(w_k)\nabla \overline{F}_{X_k}(w_k)  \|] \leq  \nonumber \\
  &\frac{1}{\gamma - \epsilon_H}\mathbb{E}_k[\| \nabla^2 \overline{F}_{S_k}(w_k)(w_k -w^*)  - \alpha_k\nabla \overline{F}_{X_k}(w_k) \|] \nonumber \leq\\ 
  &\frac{1}{\gamma - \epsilon_H} \bigg[\frac{M}{2}\|w_k - w^*\|^2 + \bigg(L_{N_{S_k}}|1 - \alpha_k| +\frac{\sigma}{\sqrt{N_{S_k}}}\bigg)\|w_k - w^*\| + \frac{\alpha_k v}{\sqrt{N_{X_k}}}\bigg],
\end{align}
by Lemma \ref{stochastic_newton_bound_alpha_k}. Term 2 can be bounded by the worst case convergence of the CG algorithm as in Lemma 3.1 in \cite{BollapragadaByrdNocedal2018}

\begin{equation}
  \mathbb{E}_k[\| p_k^r - \nabla^2\overline{F}_{S_k}^{-1}(w_k)\nabla \overline{F}_{X_k}   \|] \leq 2\alpha_k \frac{L_{N_{S_k}}}{\gamma - \epsilon_H}\sqrt{\kappa_{N_{S_k}}} \bigg(\frac{\sqrt{\kappa_{N_{S_k}}} - 1}{\sqrt{\kappa_{N_{S_k}}} + 1 }\bigg)^r \|w_k - w^*\|.
\end{equation}

\end{proof}

\subsection{Proof of Theorem \ref{gmres_convergence}} \label{proof_of_gmres_convergence}
\enskip \\

\gmresconvrestat*

\begin{proof}
Expanding the left hand side of equation \eqref{minres_gmres_bd_eq} and employing the triangle inequality, the following bound can be established:
\begin{align}
  &\mathbb{E}_k[\|w_{k+1} - w^*\|] \nonumber\\
  & =\mathbb{E}_k[ \|w_k - w^* - \alpha_k p_k^r\|] \nonumber\\
             &= \mathbb{E}_k[\|\nabla^2 \overline{F}_{S_k}(w_k)^{-1}(\nabla^2\overline{F}_{S_k}(w_k)(w_k - w^*) \nonumber \\
             & \quad  - \alpha_k\nabla^2\overline{F}_{S_k}(w_k)p_k^r +\alpha_k\nabla \overline{F}_{X_k}(w_k) - \alpha_k\nabla \overline{F}_{X_k}(w_k))\|] \nonumber \\
             &\leq \frac{1}{\gamma - \epsilon_H}\underbrace{\mathbb{E}_k[\| \nabla^2 \overline{F}_{S_k}(w_k)(w_k -w^*)  - \alpha_k\nabla \overline{F}_{X_k}(w_k) \|]}_{\text{term 1}} \nonumber \\
             & \quad +\frac{\alpha_k}{\gamma - \epsilon_H}\underbrace{\mathbb{E}_k[\|\nabla \overline{F}_{X_k}(w_k) - \nabla^2 \overline{F}_{S_k}(w_k)p_k^r\|]}_{\text{term 2}}
\end{align}
Term 1 can be bounded as
\begin{align}
  & \frac{1}{\gamma - \epsilon_H}\mathbb{E}_k[\| \nabla^2 \overline{F}_{S_k}(w_k)(w_k -w^*)  - \alpha_k\nabla \overline{F}_{X_k}(w_k) \|] \leq \nonumber \\
   &\frac{1}{\gamma - \epsilon_H}\bigg[\frac{M}{2}\|w_k - w^*\|^2 + \bigg(L_{N_{S_k}}|1-\alpha_k| +\frac{\sigma}{\sqrt{N_{S_k}}}\bigg)\|w_k - w^*\| + \frac{\alpha _k v}{\sqrt{N_{X_k}}}\bigg]. 
\end{align}
By Lemma \ref{stochastic_newton_bound_alpha_k}. For term 2 we can use a generic Krylov residual error bound
\begin{equation}
  \mathbb{E}_k[\|\nabla \overline{F}_{X_k}(w_k) - \nabla^2 \overline{F}_{S_k}(w_k)p_k^r\|] \leq  \mathcal{E}\mathbb{E}_k[\|\nabla \overline{F}_{X_k}\|].
\end{equation}
The last bound is given by the mean value theorem and Hessian spectral bound for $ N_{X_k}$ from assumption \ref{assumption_a1} as in Lemma \ref{lipschitz_bound_alpha_k}:
\begin{align}
  \mathbb{E}_k[\|\nabla \overline{F}_{X_k}(w_k)\|] &= \mathbb{E}_k[\|\nabla \overline{F}_{X_k}(w_k) - \nabla \overline{F}_{X_k}(w^*) + \nabla \overline{F}_{X_k}(w^*) \|] \nonumber \\
  &\leq \mathbb{E}_k[\|\nabla \overline{F}_{X_k}(w_k) - \nabla \overline{F}_{X_k}(w^*)\|] + \mathbb{E}_k[\|\nabla \overline{F}_{X_k}(w^*) \|] \nonumber \\
  &\leq L_{N_{X_k}} \|w_k  - w^*\| + \epsilon_g.
\end{align}
For GMRES, we have due to Proposition 6.33 in \cite{Saad2003} 
\begin{equation}
  \mathcal{E} = \frac{L_{N_{S_k}}}{\gamma - \epsilon_H} \frac{C_m(\frac{a}{d})}{|C_m(\frac{c}{d})|}.
\end{equation}
For MINRES, we have due to Theorem 5.10 in \cite{Saad2003}
\begin{equation}
  \mathcal{E} = \bigg(1 - \frac{(\gamma - \epsilon_H)^2}{L_{N_{S_k}}^2}\bigg)^\frac{k}{2}.
\end{equation}

\end{proof}

%%%%%%%%%%%%%%%%%%%%%%%%%%%%%%%%%%%%%%%%%%%%%%%%%%%%%%%%%%%%%%%%%%%%%%%%%%%%%%%%%%%%%%%%%%%%%%%%%%%%%%%%%%%%%%%%%%%%%%%%%%%%%%%%%%%%%%%%%%%%%%%%%%%%%%%%%%%%%%%%%%%%%%%%%%%%%%%%%%%%%%%%%%%%%%%%%%%%%%%%%%%%%%%%%%%%%%%%%%%%%%%%

\end{document}